\newtheorem{theorem}{Theorem}[section]
\theoremstyle{definition}
\newtheorem{definition}[theorem]{Definition}
\newtheorem{proposition}[theorem]{Proposition}
\newtheorem{corollary}[theorem]{Corollary}
\newtheorem{remark}[theorem]{Remark}
\theoremstyle{remark}
\numberwithin{equation}{section}
\begin{document}

\title[Center of double extension regular algebras of type (14641)]{Center of double extension regular algebras of type (14641)}



\author{Andr\'es Rubiano}
\address{Universidad ECCI}
\curraddr{Campus Universitario}
\email{arubianos@ecci.edu.co}

\thanks{}

\subjclass[2020]{16S36, 16W50, 16U70, 14R10}

\keywords{Double Ore extensions, Artin-Schelter regular algebras, center, central subalgebras, PBW bases, Zariski cancellation problem, SageMath computations.}

\date{}

\dedicatory{Dedicated to Karol Herrera}


\begin{abstract}
In this paper we compute the center and, in several cases, central subalgebras of double Ore extensions of type ($14641$) under suitable restrictions on the defining parameters. Part of the analysis is supported by computations in \textsf{SageMath}. As an application, we provide new examples related to the Zariski cancellation problem.
\end{abstract}

\maketitle


\section{Introduction}

Ore \cite{Ore1931, Ore1933} introduced a fundamental class of noncommutative polynomial rings that now permeates ring theory and noncommutative algebra. Given an associative unital ring $R$, an endomorphism $\sigma$ of $R$, and a $\sigma$-derivation $\delta$, the Ore extension (skew polynomial ring) $R[x;\sigma,\delta]$ is generated by $R$ and one element $x$ subject to $xr=\sigma(r)x+\delta(r)$ for all $r\in R$. A wide body of work studies their ring-theoretic, homological, and geometric features and applications (see, for instance, \cite{BrownGoodearl2002, BuesoTorrecillasVerschoren2003, Fajardoetal2020, Fajardoetal2024, GoodearlLetzter1994, GoodearlWarfield2004, McConnellRobson2001, Li2002, Rosenberg1995, SeilerBook2010} and references therein).

Artin-Schelter regular algebras, introduced in \cite{ArtinSchelter1987}, are widely regarded as noncommutative analogues of commutative polynomial algebras and constitute a cornerstone of noncommutative projective geometry (see, e.g., \cite{Bellamyetal2016, Rogalski2023}). In particular, the classification of quantum $\mathbb{P}^3$'s is intimately tied to the classification of Artin-Schelter regular algebras of global dimension four. Motivated by the search for new regular algebras generated in degree one (compare \cite[Examples 1.8 and 1.9]{Rogalski2023} and the dimension-three classifications \cite{ArtinSchelter1987, ArtinTateVandenBergh2007, ArtinTateVandenBergh1991}, cf.\ \cite{Stephenson1996}), Zhang and Zhang \cite{ZhangZhang2008, ZhangZhang2009} introduced \emph{double Ore extensions} as a natural two-generator generalization of Ore extensions. Although double extensions share some formal similarities with iterated Ore extensions, neither class contains the other in general \cite[Example 4.2 and Proposition 0.5(c)]{ZhangZhang2008}, and necessary and sufficient conditions for being an iterated Ore extension were given by Carvalho et al.\ \cite[Theorems 2.2 and 2.4]{Carvalhoetal2011}. Moreover, as emphasized in \cite{ZhangZhang2008}, standard techniques for Ore extensions often fail in the double-extension setting, and structural properties are substantially harder to control; see also, e.g., \cite{GomezSuarez2020, Li2022, LouOhWang2020, LuOhWangYu2018, LuWangZhuang2015, RamirezReyes2024, SuarezLezamaReyes2017, ZhuVanOystaeyenZhang2017}.

In \cite{ZhangZhang2009}, Zhang and Zhang studied Artin--Schelter regular algebras $B$ of global dimension four generated in degree one. By \cite{LuPalmieriWyZhang2007}, such an algebra is generated by $2$, $3$, or $4$ elements, and when $B$ has four generators the minimal projective resolution of the trivial module $\Bbbk_B$ has the form
\begin{equation}\label{LISTResolution}
0 \to B(-4) \to B(-3)^{\oplus 4} \to B(-2)^{\oplus 6} \to B(-1)^{\oplus 4} \to B \to \Bbbk_B \to 0.
\end{equation}
Accordingly, these algebras are said to be \emph{of type} $(14641)$. Their classification in \cite{ZhangZhang2009} yields $26$ families of double extensions, labeled $\mathbb{A},\mathbb{B},\dots,\mathbb{Z}$; we denote by $\mathcal{LIST}$ the class of all algebras in these families. While many algebras in $\mathcal{LIST}$ are still Ore extensions, there may exist nonzero $\delta$ and $\tau$ such that the same underlying data $(P,\sigma)$ produces a double extension $R_P[y_1,y_2;\sigma,\delta,\tau]$ that is not an Ore extension \cite[p. 374]{ZhangZhang2009}. 

We mention that differential-geometric aspects of these double Ore extensions have also been investigated recently. In particular, the author and Reyes studied the differential smoothness problem for double extension regular algebras of type $(14641)$ and proved that these algebras are not differentially smooth in the sense of noncommutative differential geometry; see \cite{RubianoReyes2024DSDoubleOreExtensions} for details.

From a computational perspective, Gr\"obner-Shirshov techniques have also proved effective for these families. Indeed, Herrera, Higuera, and the author computed finite Gr\"obner-Shirshov bases for several regular double extension algebras of type $(14641)$ via an algorithmic approach, and showed that these families admit PBW bases; see \cite{HerreraHigueraRubiano2025}.

The center $Z(A)$ is a primary commutative invariant of a noncommutative algebra $A$ and often serves as the bridge between noncommutative structure and commutative geometry. It governs, for instance, the behavior of representations and central characters, controls finiteness properties (e.g., module-finiteness over the center in many PI situations), and provides access to geometric stratifications of spectra via central localizations. In quantum algebra this philosophy is particularly vivid: large centers at roots of unity underpin deep links between noncommutative algebras and Poisson geometry, as illustrated by the ``quantum coadjoint action'' framework of De Concini--Kac--Procesi \cite{DeConciniKacProcesi1992}.

From a computational and structural viewpoint, explicit knowledge of $Z(A)$ (or useful central subalgebras) can unlock rigid invariants that constrain automorphisms and isomorphisms. A prominent example is the discriminant method, which uses central data to control automorphism groups and related rigidity properties for broad families of noncommutative algebras \cite{CekenPalmieriWangZhang2015, CekenPalmieriWangZhang2016}. In the realm of skew PBW extensions, concrete descriptions of centers and centralizers have been developed and applied to structural questions \cite{LezamaVenegas2020, TumwesigyeRichterSilvestrov2020}. These perspectives motivate the explicit center computations carried out in this work for double Ore extensions of type ($14641$).

The (commutative) Zariski cancellation problem asks whether an isomorphism $X\times \mathbb{A}^1 \cong Y\times \mathbb{A}^1$ forces $X\cong Y$, a question intertwined with the rigidity and recognition of affine spaces and with invariants coming from derivations and automorphism groups. Classical results settle cancellation for large classes of affine surfaces (see, e.g., \cite{CrachiolaMakarLimanov2008}), while striking counterexamples exist in positive characteristic for affine spaces of dimension $\ge 3$ \cite{Gupta2014Inventiones, Gupta2014AdvMath}. In the noncommutative setting, Bell and Zhang formulated and developed cancellation analogues and established substantial positive results for several families, highlighting the role of central invariants \cite{BellZhang2017}. Consequently, new explicit classes with well-controlled centers provide valuable testbeds and additional evidence in this active area.

The goal of this paper is to compute the centers (and certain central subalgebras) for selected members of $\mathcal{LIST}$ under explicit conditions on the parameters; several computations were carried out with the aid of \textsc{SageMath}. As an application, we obtain new examples that contribute to current work on cancellation phenomena in the sense of the Zariski cancellation problem.

The article is organized as follows. Section \ref{DAPreliminaries} collects the necessary preliminaries on double Ore extensions and Artin-Schelter regular algebras, fixing notation and making the paper self-contained. In Section \ref{DAThreegenerators} we establish and exploit commutation formulas for powers of the generators in the double Ore setting, and we use these relations to compute the centers (and, in several cases, explicit central subalgebras) for selected double Ore extensions of type $(14641)$; this section also includes a brief illustrative computation in \textsc{SageMath}. Section \ref{DAgeneralgenerators} is devoted to the cancellation application: we identify those algebras in our families that are (universally/strongly) cancellative in the sense of the Zariski cancellation problem. Finally, the Appendix contains tables recording the defining relations for the $26$ families in the classification, for convenient reference.

Throughout the paper, $\mathbb{N}$ denotes the set of natural numbers including zero. The word \emph{ring} means an associative ring with identity, not necessarily commutative. All vector spaces and algebras (always associative and unital) are over a fixed field $\Bbbk$, which is assumed to have characteristic zero.

\section{Double extension regular algebras of type (14641)}\label{DAPreliminaries}

We briefly review the notion of double extensions introduced by Zhang and Zhang \cite{ZhangZhang2008}. Since their papers contain some misprints in the compatibility conditions imposed on the DE-data (see \cite[p. 2674]{ZhangZhang2008} and \cite[p. 379]{ZhangZhang2009}), we adopt throughout the corrected formulation given by Carvalho et al. \cite{Carvalhoetal2011}.

\begin{definition}[{\cite[Definition 1.3]{ZhangZhang2008}; \cite[Definition 1.1]{Carvalhoetal2011}}]\label{DoubleOreDefinition}
Let $R$ be a subalgebra of a $\Bbbk$-algebra $B$.
\begin{itemize}
    \item[\rm (a)] We say that $B$ is a \emph{right double extension} of $R$ if:
    \begin{itemize}
        \item[\rm (i)] $B$ is generated by $R$ together with two new indeterminates $y_1,y_2$;
        \item[\rm (ii)] the generators $y_1,y_2$ satisfy
        \begin{equation}\label{Carvalhoetal2011(1.I)}
        y_2y_1 = p_{12}y_1y_2 + p_{11}y_1^2 + \tau_1y_1 + \tau_2y_2 + \tau_0,
        \end{equation}
        for some $p_{12}, p_{11} \in \Bbbk$ and $\tau_1, \tau_2, \tau_0 \in R$;
        \item[\rm (iii)] $B$ is a free left $R$-module with basis $\{y_1^{i}y_2^{j} \mid i,j \ge 0\}$;
        \item[\rm (iv)] one has $y_1R+y_2R+R\subseteq Ry_1+Ry_2+R$.
    \end{itemize}
    \item[\rm (b)] A right double extension $B$ of $R$ is called a \emph{double extension} of $R$ provided:
    \begin{enumerate}
        \item[\rm (i)] $p_{12}\neq 0$;
        \item[\rm (ii)] $B$ is a free right $R$-module with basis $\{y_2^{i}y_1^{j}\mid i,j\ge 0\}$;
        \item[\rm (iii)] $y_1R+y_2R+R = Ry_1+Ry_2+R$.
    \end{enumerate}
\end{itemize}
\end{definition}

\begin{remark}
It is classical that a two-step iterated Ore extension of the form 
$R[y_1;\sigma_1,\delta_1][y_2;\sigma_2,\delta_2]$
is a free left $R$-module with basis $\{y_1^{n_1}y_2^{n_2}\}_{n_1,n_2\ge 0}$; see \cite[Lemma 1.5]{ZhangZhang2008}.
In general, such an iterated Ore extension need not be a (right) double extension in the sense of Definition \ref{DoubleOreDefinition}(a), since it may fail to admit a quadratic relation of the form \eqref{Carvalhoetal2011(1.I)}.
However, when $\sigma_2$ is chosen so that \eqref{Carvalhoetal2011(1.I)} does hold, one indeed obtains a (right) double extension \cite[p. 2671]{ZhangZhang2008}.
This explains why many double extensions fall inside the iterated Ore framework, and why the freeness requirement in Definition \ref{DoubleOreDefinition}(a)(iii) is natural.
As Zhang and Zhang put it \cite[p. 2671]{ZhangZhang2008}, \textquotedblleft Our definition of a double extension is neither most general nor ideal, but it works very well in \cite{ZhangZhang2009}\textquotedblright.
\end{remark}

Condition \ref{DoubleOreDefinition}(a)(iv) is equivalent to the existence of maps
\[
\sigma=\begin{bmatrix}\sigma_{11}&\sigma_{12}\\ \sigma_{21}&\sigma_{22}\end{bmatrix}:R\to M_{2\times 2}(R),
\qquad
\delta=\begin{bmatrix}\delta_1\\ \delta_2\end{bmatrix}:R\to M_{2\times 1}(R),
\]
such that
\begin{equation}\label{Carvalhoetal2011(1.II)}
\begin{bmatrix}y_1\\ y_2\end{bmatrix}r
=\sigma(r)\begin{bmatrix}y_1\\ y_2\end{bmatrix}+\delta(r)
\quad\text{for all }r\in R.
\end{equation}

Whenever $B$ is a right double extension of $R$, we write
$B=R_P[y_1,y_2;\sigma,\delta,\tau]$,
where $P=(p_{12},p_{11})\in\Bbbk^2$, $\tau=\{\tau_0,\tau_1,\tau_2\}\subseteq R$, and $\sigma,\delta$ are as above.
Following standard terminology, $P$ is the \emph{parameter} and $\tau$ is the \emph{tail}; the collection $\{P,\sigma,\delta,\tau\}$ is referred to as the \emph{DE-data}.
A particularly important subclass is that of \emph{trimmed double extensions} \cite[Convention 1.6(c)]{ZhangZhang2008}, for which $\delta=0$ and $\tau=\{0,0,0\}$.
In this case we use the shorthand notation $R_P[y_1,y_2;\sigma]$.

For a right double extension $R_P[y_1,y_2;\sigma,\delta,\tau]$, each $\sigma_{ij}$ and $\delta_i$ is an endomorphism of the $\Bbbk$-vector space $R$.
Moreover, by \cite[Lemma 1.7]{ZhangZhang2008}, the map $\sigma$ is an algebra homomorphism and $\delta$ is a $\sigma$-derivation, i.e.\ $\delta$ is $\Bbbk$-linear and satisfies
$\delta(rr')=\sigma(r)\delta(r')+\delta(r)r'$ for all $r,r'\in R$.
In particular, if the matrix $\bigl[\begin{smallmatrix}\sigma_{11}&\sigma_{12}\\ \sigma_{21}&\sigma_{22}\end{smallmatrix}\bigr]$ is triangular, then $\sigma_{11}$ and $\sigma_{22}$ are algebra homomorphisms.

If $\tau\subseteq \Bbbk$, then the subalgebra of $R_P[y_1,y_2;\sigma,\delta,\tau]$ generated by $y_1,y_2$ is a double extension
$\Bbbk_P[y_1,y_2;\sigma',\delta',\tau']$, where $\sigma'=\sigma|_{\Bbbk}$ is the canonical embedding of $\Bbbk$ into $M_{2\times 2}(\Bbbk)$ and $\delta'=\delta|_{\Bbbk}=0$.
Carvalho et al. \cite[Proposition 1.2]{Carvalhoetal2011} show that this latter algebra is always an iterated Ore extension.

The following criterion encodes precisely when the defining relations yield a (right) double extension.

\begin{proposition}[{\cite[Lemma 1.10 and Proposition 1.11]{ZhangZhang2008}; \cite[Proposition 1.5]{Carvalhoetal2011}}]\label{Carvalhoetal2011Proposition1.5}
Let $R$ be a $\Bbbk$-algebra, $\sigma:R\to M_{2\times 2}(R)$ an algebra homomorphism, $\delta:R\to M_{2\times 1}(R)$ a $\sigma$-derivation, $P=(p_{12},p_{11})\subseteq \Bbbk$, and $\tau=\{\tau_0,\tau_1,\tau_2\}\subseteq R$.
Let $B$ be the associative $\Bbbk$-algebra generated by $R,y_1,y_2$ subject to \eqref{Carvalhoetal2011(1.I)} and \eqref{Carvalhoetal2011(1.II)}.
Then $B$ is a right double extension if and only if the maps $\sigma_{ij}$ and $\rho_k$ ($i,j\in\{1,2\}$, $k\in\{0,1,2\}$) satisfy the six identities \eqref{Carvalhoetal2011(1.III)}--\eqref{Carvalhoetal2011(1.VIII)}, where $\sigma_{i0}=\delta_i$ and $\rho_k$ denotes \underline{right} multiplication by $\tau_k$:
\begin{equation}\label{Carvalhoetal2011(1.III)} 
    \sigma_{21} \sigma_{11} + p_{11}\sigma_{22}\sigma_{11} = p_{11}\sigma_{11}^2 + p_{11}^2 \sigma_{12}\sigma_{11} + p_{12}\sigma_{11}\sigma_{21} + p_{11}p_{12}\sigma_{12}\sigma_{21},
\end{equation}
\begin{align}    
    \sigma_{21} \sigma_{12} + p_{12} \sigma_{22} \sigma_{11} = &\ p_{11} \sigma_{11} \sigma_{12} + p_{11}p_{12}\sigma_{12}\sigma_{11} + p_{12}\sigma_{11}\sigma_{22} + p_{12}^2\sigma_{12}\sigma_{21}, \\
    \sigma_{22}\sigma_{12} = &\ p_{11} \sigma_{12}^2 + p_{12}\sigma_{12}\sigma_{22}, \\
    \sigma_{20} \sigma_{11} + \sigma_{21}\sigma_{10} + \underline{\rho_1 \sigma_{22}\sigma_{11}} = &\ p_{11} (\sigma_{10} \sigma_{11} + \sigma_{11}\sigma_{10} + \rho_{1}\sigma_{12}\sigma_{11}) \notag \\
    &\ + p_{12} (\sigma_{10} \sigma_{21} + \sigma_{11} \sigma_{20} + \rho_1 \sigma_{12} \sigma_{21}) + \tau_1 \sigma_{11} + \tau_2 \sigma_{21}, \\
    \sigma_{20} \sigma_{12} + \sigma_{22} \sigma_{10} + \underline{\rho_2 \sigma_{22} \sigma_{11}} = &\ p_{11} (\sigma_{10} \sigma_{12} + \sigma_{12}\sigma_{10} + \rho_{2} \sigma_{12} \sigma_{11}) \notag \\
    &\ +p_{12} (\sigma_{10} \sigma_{22} + \sigma_{12} \sigma_{20} + \rho_2 \sigma_{12} \sigma_{21}) +  \tau_{1} \sigma_{12} + \tau_2 \sigma_{22}, \\
    \sigma_{20} \sigma_{10} + \underline{\rho_0 \sigma_{22} \sigma_{11}} = &\ p_{11} (\sigma_{10}^2 + \rho_0 \sigma_{12} \sigma_{11}) + p_{12} (\sigma_{10} \sigma_{20} + \rho_0 \sigma_{12} \sigma_{21})\notag \\
    &\ + \tau_1 \sigma_{10} + \tau_2 \sigma_{20} + \tau_0 {\rm id}_R. \label{Carvalhoetal2011(1.VIII)}
\end{align}
\end{proposition}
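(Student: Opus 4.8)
The plan is to deduce both implications from a single explicit computation, via the Diamond Lemma for rewriting systems. First I would reduce the statement to one condition. Among the four requirements in Definition~\ref{DoubleOreDefinition}(a), parts (i) and (ii) hold by the very construction of $B$, and part (iv) is, as noted after Definition~\ref{DoubleOreDefinition}, exactly the existence of $\sigma$ and $\delta$ satisfying \eqref{Carvalhoetal2011(1.II)}, which is among the hypotheses. So the whole content is that $B$ be a free left $R$-module with basis $\{y_1^iy_2^j\}_{i,j\ge 0}$, i.e.\ Definition~\ref{DoubleOreDefinition}(a)(iii).

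To analyse freeness, fix a $\Bbbk$-basis $\{e_\alpha\}$ of $R$ and present $B$ by the generators $\{e_\alpha\}\cup\{y_1,y_2\}$ modulo three families of rewriting rules: the multiplication table of $R$ (rewrite $e_\alpha e_\beta$ by its expansion in the $e_\gamma$'s); the relations \eqref{Carvalhoetal2011(1.II)} written in the basis (rewrite $y_ke_\alpha$ as $\sigma_{k1}(e_\alpha)y_1+\sigma_{k2}(e_\alpha)y_2+\delta_k(e_\alpha)$ and re-expand the coefficients in the $e_\gamma$'s); and the quadratic relation \eqref{Carvalhoetal2011(1.I)} (rewrite $y_2y_1$ as $p_{12}y_1y_2+p_{11}y_1^2+\tau_1y_1+\tau_2y_2+\tau_0$, re-expanding the tail). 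Equip the free monoid on the generators with an admissible well-order refining $e_\alpha<y_1<y_2$ — say, first by word length, then by the number of $y$-letters, then by how far left the $y$-letters sit, and finally degree-lexicographically in $y_1<y_2$ on the $y$-subword — under which each rule is strictly decreasing and the irreducible words are precisely the $e_\alpha y_1^iy_2^j$. These always span $B$, and by the Diamond Lemma they form a $\Bbbk$-basis of $B$ — equivalently, Definition~\ref{DoubleOreDefinition}(a)(iii) holds — if and only if every overlap ambiguity is resolvable. There are exactly three kinds: (a) $e_\alpha e_\beta e_\gamma$, resolved by the associativity of $R$; (b) $y_ke_\alpha e_\beta$, whose resolution is precisely the assertion that $\sigma$ is an algebra homomorphism and $\delta$ a $\sigma$-derivation, which holds by hypothesis (cf.\ \cite[Lemma 1.7]{ZhangZhang2008}); and (c) the single genuinely new overlap $y_2y_1e_\alpha$.

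Thus everything reduces to computing, for each $\alpha$, the two reductions of $y_2y_1e_\alpha$ to normal form — contracting $y_2y_1$ first via \eqref{Carvalhoetal2011(1.I)}, versus contracting $y_1e_\alpha$ first via \eqref{Carvalhoetal2011(1.II)} — and asking when they coincide. Carrying each reduction all the way down (the freshly created occurrences of $y_2y_1$ and of $y_k\cdot(\text{element of }R)$ must themselves be contracted) lands both results in the free left $R$-module $\bigoplus_{i,j}R\,y_1^iy_2^j$; equating the coefficients of the three top monomials $y_1^2,\ y_1y_2,\ y_2^2$ gives \eqref{Carvalhoetal2011(1.III)} and the two displayed equations following it — which involve only the $\sigma_{ij}$ and the scalars $p_{11},p_{12}$ — while equating the coefficients of $y_1,\ y_2,\ 1$ gives the remaining three identities, which additionally involve $\delta_i=\sigma_{i0}$ and the right-multiplication operators $\rho_0,\rho_1,\rho_2$ coming from the tail. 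This yields ($\Leftarrow$). For ($\Rightarrow$) one performs the identical computation: if $B$ is a right double extension then $y_2y_1e_\alpha$ has a unique expansion in the left $R$-basis $\{y_1^iy_2^j\}$, so the two reductions must agree, and their equality, read as operator identities on $R$ with $\alpha$ varying, is exactly \eqref{Carvalhoetal2011(1.III)}--\eqref{Carvalhoetal2011(1.VIII)}.

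The main obstacle is the computation in (c): reducing $y_2y_1e_\alpha$ is a long, branching manipulation, and the bookkeeping must respect the left $R$-module structure throughout, so that the tail contributes through the right-multiplication maps $\rho_k$ — which do not commute with the $\sigma_{ij}$ — rather than through scalars; in particular, the underlined summands $\rho_k\sigma_{22}\sigma_{11}$ in the last three identities arise in the second reduction from contracting the occurrence of $y_2y_1$ produced by $y_2\,\sigma_{11}(e_\alpha)\,y_1$. A minor preliminary point must also be settled, namely that the refined order above is genuinely admissible — a semigroup partial order with the descending chain condition under which all three rewriting families strictly decrease — which is routine with the refinement indicated.
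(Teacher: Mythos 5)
Your argument is correct. Note that the paper does not prove this proposition at all --- it is quoted verbatim from the cited sources --- and your Diamond-Lemma argument is essentially the proof given there: Zhang--Zhang derive \eqref{Carvalhoetal2011(1.III)}--\eqref{Carvalhoetal2011(1.VIII)} exactly by computing the two reductions of $y_2y_1r$ (their Lemma 1.10) and prove the converse by verifying that these identities make the overlap resolvable, hence $\{y_1^iy_2^j\}$ a left $R$-basis (their Proposition 1.11); your identification of the source of the underlined terms $\rho_k\sigma_{22}\sigma_{11}$ and of the degree split between the first three and last three identities is accurate.
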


\begin{remark}
\begin{enumerate}
    \item[\rm (i)] As observed in \cite[Remark 1.6]{Carvalhoetal2011}, Proposition \ref{Carvalhoetal2011Proposition1.5} yields uniqueness (up to isomorphism) of a right double extension with prescribed DE-data, whenever it exists. More precisely, if $\overline{B}=R_P[y_1,y_2;\sigma,\delta,\tau]$ is a right double extension of $R$ and $B$ is the algebra constructed in Proposition \ref{Carvalhoetal2011Proposition1.5}, then the homomorphism $B\to\overline{B}$ acting as the identity on $R$ and sending $y_i\mapsto y_i$ is necessarily an isomorphism, since both algebras are free left $R$-modules with basis $\{y_1^iy_2^j\mid i,j\ge 0\}$.

    \item[\rm (ii)] When $p_{12}\neq 1$, one may (after a suitable change of generators and, if necessary, an adjustment of $\sigma,\delta,\tau$) assume $p_{11}=0$; see \cite[p. 2842]{Carvalhoetal2011}. In the case $p_{11}=0$, there is a natural filtration on $B$ determined by ${\rm deg}\,R=0$ and ${\rm deg}\,y_1={\rm deg}\,y_2=1$. With respect to this filtration, the associated graded algebra $G(B)$ is isomorphic to the trimmed double extension $R_P[\overline{y}_1,\overline{y}_2;\overline{\sigma},\{0,0,0\}]$.
\end{enumerate}
\end{remark}

We will also use the characterization of those double extensions which can be realized as two-step iterated Ore extensions, due to Carvalho et al. \cite{Carvalhoetal2011} (compare also \cite[Proposition 3.6]{ZhangZhang2009}).

\begin{proposition}\label{Carvalhoetal2011Theorems2.2and2.4}
\begin{enumerate}
    \item[\rm (1)] \cite[Theorem 2.2]{Carvalhoetal2011}
    Let $B=R_P[y_1,y_2;\sigma,\delta,\tau]$ be a right double extension of $R$.
    \begin{itemize}
        \item[\rm (a)] The following statements are equivalent:
        \begin{itemize}
            \item $B$ can be presented as an iterated Ore extension of the form $R[y_1;\sigma_1,d_1][y_2;\sigma_2,d_2]$;
            \item $\sigma_{12}=0$;
            \item there is an iterated Ore presentation $R[y_1;\sigma_1,d_1][y_2;\sigma_2,d_2]$ such that
            \begin{align*}
                \sigma_2(R)\subseteq R,\quad &\sigma_2(y_1)=p_{12}y_1+\tau_2,\\
                d_2(R)\subseteq Ry_1+R,\quad &d_2(y_1)=p_{11}y_1^2+\tau_1y_1+\tau_0,
            \end{align*}
            for some $p_{ij}\in\Bbbk$ and $\tau_i\in R$, and the data are related by
            \[
            \sigma=\begin{bmatrix}\sigma_{1}&0\\ \sigma_{21}&\sigma_2|_R\end{bmatrix},
            \qquad
            \delta(a)=\begin{bmatrix}d_1(a)\\ d_2(a)-\sigma_{21}(a)y_1\end{bmatrix},
            \quad \text{for all }a\in R.
            \]
        \end{itemize}

        \item[\rm (b)] Under any (hence all) of the equivalent conditions in (a), $B$ is a double extension if and only if $p_{12}\neq 0$ and $\sigma_1=\sigma_{11}$, $\sigma_2|_R=\sigma_{22}$ are automorphisms of $R$.
    \end{itemize}

    \item[\rm (2)] \cite[Theorem 2.4]{Carvalhoetal2011}
    Let $B=R_P[y_1,y_2;\sigma,\delta,\tau]$ be a right double extension of $R$.
    Then $B$ admits an iterated Ore presentation $R[y_2;\sigma_2',d_2'][y_1;\sigma_1',d_1']$ if and only if $\sigma_{21}=0$, $p_{12}\neq 0$, and $p_{11}=0$.
    In this situation, $B$ is a double extension if and only if $\sigma_2'=\sigma_{22}$ and $\sigma_1'|_R=\sigma_{11}$ are automorphisms of $R$.
\end{enumerate}
\end{proposition}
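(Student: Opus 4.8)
The plan rests on two facts from the setup. A right double extension $B=R_P[y_1,y_2;\sigma,\delta,\tau]$ is, by definition, a free left $R$-module on $\{y_1^iy_2^j\}$, so the ``coefficients'' in such expansions are unambiguous; and, by the uniqueness principle recorded in \cite[Remark 1.6]{Carvalhoetal2011}, once one builds an algebra by hand that satisfies the defining relations \eqref{Carvalhoetal2011(1.I)}--\eqref{Carvalhoetal2011(1.II)} of $B$ and is a free left $R$-module on $\{y_1^iy_2^j\}$, it must coincide with $B$. I read ``presented as an iterated Ore extension of the form $R[y_1;\sigma_1,d_1][y_2;\sigma_2,d_2]$'' as meaning that the two Ore variables are the distinguished generators $y_1,y_2$ of $B$. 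With this, part (1)(a) goes as follows. For $(i)\Rightarrow(ii)$: in $R[y_1;\sigma_1,d_1][y_2;\sigma_2,d_2]$ one has $y_1r=\sigma_1(r)y_1+d_1(r)\in Ry_1+R$ for $r\in R$, and comparing with $y_1r=\sigma_{11}(r)y_1+\sigma_{12}(r)y_2+\delta_1(r)$ in the left $R$-basis forces $\sigma_{12}=0$ (and $\sigma_{11}=\sigma_1$, $\delta_1=d_1$). For $(ii)\Rightarrow(iii)$, assume $\sigma_{12}=0$. Then $\sigma$ is lower triangular, so $\sigma_{11},\sigma_{22}$ are algebra endomorphisms of $R$ and $\delta_1$ is a $\sigma_{11}$-derivation, whence the subalgebra $A_1:=\bigoplus_{j\ge0}Ry_1^j$ of $B$ is an Ore extension $R[y_1;\sigma_{11},\delta_1]$. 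The crucial observation is that $\sigma_{12}=0$ makes $R$ commute past $y_1$ without producing any $y_2$, while \eqref{Carvalhoetal2011(1.I)} reads $y_2y_1=(p_{12}y_1+\tau_2)y_2+(p_{11}y_1^2+\tau_1y_1+\tau_0)$ with $y_2$ occurring only linearly; an induction on $i$ then gives $y_2y_1^i\in A_1y_2\oplus A_1$, hence $y_2a\in A_1y_2\oplus A_1$ for every $a\in A_1$. Defining $\sigma_2,d_2\colon A_1\to A_1$ by $y_2a=\sigma_2(a)y_2+d_2(a)$ (well defined since $B$ is free left $A_1$-module on $\{y_2^i\}$), the standard bilinearity computation shows $\sigma_2$ is an algebra endomorphism and $d_2$ a $\sigma_2$-derivation, so $B=A_1[y_2;\sigma_2,d_2]$ with exactly the data in (iii): $\sigma_2|_R=\sigma_{22}$, $\sigma_2(y_1)=p_{12}y_1+\tau_2$, $d_2|_R=\sigma_{21}(\cdot)y_1+\delta_2$, $d_2(y_1)=p_{11}y_1^2+\tau_1y_1+\tau_0$. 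The implication $(iii)\Rightarrow(i)$ is immediate.

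For part (1)(b), I would write $B=A_1[y_2;\sigma_2,d_2]$ with $A_1=R[y_1;\sigma_{11},\delta_1]$ as above, and recall the standard fact that an Ore extension $S[z;\sigma,\partial]$ is a free right $S$-module on $\{z^i\}$ exactly when $\sigma$ is bijective. Applying this to $A_1$ over $R$ and to $B$ over $A_1$ (together with the bookkeeping that $B$ is then free right $R$-module on $\{y_2^iy_1^j\}$ iff both $\sigma_{11}$ and $\sigma_2$ are bijective), one is reduced to recognizing when $\sigma_2\in\mathrm{Aut}(A_1)$. Since $\sigma_2$ preserves the degree filtration of $A_1$ and $\sigma_2(y_1)=p_{12}y_1+\tau_2$, its associated graded on $\mathrm{gr}(A_1)$ is bijective iff $p_{12}\neq0$ and $\sigma_{22}=\sigma_2|_R\in\mathrm{Aut}(R)$, and a filtered endomorphism with bijective associated graded is itself bijective. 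Combining with Definition \ref{DoubleOreDefinition}(b) --- and checking the equality $y_1R+y_2R+R=Ry_1+Ry_2+R$ when $\sigma_{11},\sigma_{22}$ are bijective and $p_{12}\neq0$ by writing $ry_1$ and $ry_2$ using $\sigma_{11}^{-1}$ and $\sigma_2^{-1}$ --- yields the stated equivalence; recall $p_{12}\neq0$ is forced by Definition \ref{DoubleOreDefinition}(b)(i).

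Part (2) is the mirror image with $y_1$ and $y_2$ interchanged, plus one genuinely new ingredient. If $B=R[y_2;\sigma_2',d_2'][y_1;\sigma_1',d_1']$, then $y_2r\in Ry_2+R$ forces $\sigma_{21}=0$ exactly as above; moreover $y_1y_2=\sigma_1'(y_2)y_1+d_1'(y_2)$ must be at most linear in $y_1$, so solving \eqref{Carvalhoetal2011(1.I)} for $y_1y_2$ requires the $y_1^2$-coefficient to vanish and $p_{12}$ to be invertible, i.e.\ $p_{11}=0$ and $p_{12}\neq0$. Conversely, if $\sigma_{21}=0$, $p_{11}=0$, $p_{12}\neq0$, then $\sigma$ is upper triangular, $A_2':=\bigoplus_{i\ge0}Ry_2^i$ is an Ore extension $R[y_2;\sigma_{22},\delta_2]$, and the relations $y_1y_2=p_{12}^{-1}(y_2-\tau_1)y_1-p_{12}^{-1}(\tau_2y_2+\tau_0)$ and $y_1r=\sigma_{11}(r)y_1+(\sigma_{12}(r)y_2+\delta_1(r))$ exhibit, by the same well-definedness argument, $B=A_2'[y_1;\sigma_1',d_1']$ with $\sigma_1'|_R=\sigma_{11}$ and $\sigma_1'(y_2)=p_{12}^{-1}(y_2-\tau_1)$. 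The double-extension criterion then follows as in part (1)(b): $B$ is free right $R$-module on $\{y_2^iy_1^j\}$ iff $\sigma_{22}$ and $\sigma_1'$ are bijective, and $\sigma_1'$ (filtered, with leading coefficient $p_{12}^{-1}\neq0$ on $y_2$) is bijective iff $\sigma_1'|_R=\sigma_{11}\in\mathrm{Aut}(R)$.

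The step I expect to require the most care is making the equivalence ``free right module on the monomial basis $\Leftrightarrow$ twisting map bijective'' rigorous in the iterated situation: extracting from ``$B$ is free right $R$-module on $\{y_2^iy_1^j\}$'' the freeness of the intermediate Ore extension $A_1$ (resp.\ $A_2'$) over $R$ is not purely formal, and is cleanest via the associated graded ring for the degree filtration, where the twisted derivations vanish and the monomial bases become transparent. By contrast, the verifications that the constructed $\sigma_2,d_2$ (resp.\ $\sigma_1',d_1'$) form an algebra homomorphism and a twisted derivation, the induction bounding the $y_2$-degree (resp.\ $y_1$-degree) of $y_2y_1^i$ (resp.\ $y_1y_2^i$), and the check of $y_1R+y_2R+R=Ry_1+Ry_2+R$ are routine consequences of the left $R$-freeness of $B$ and do not present a real difficulty.
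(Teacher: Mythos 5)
The first thing to note is that the paper does not prove this proposition at all: it is stated as a verbatim import of Theorems 2.2 and 2.4 of Carvalho--Lopes--Matczuk \cite{Carvalhoetal2011}, so there is no internal proof to compare your argument against. Judged on its own terms, your reconstruction follows the same route as the cited source and is sound in outline: for (1)(a), comparing $y_1r=\sigma_{11}(r)y_1+\sigma_{12}(r)y_2+\delta_1(r)$ with $y_1r\in Ry_1+R$ in the free left $R$-basis correctly forces $\sigma_{12}=0$; building $\sigma_2,d_2$ on $A_1=\bigoplus_j Ry_1^j$ from the left $A_1$-freeness of $B$ on $\{y_2^j\}$ is the right mechanism; and in (2) the degree count in $y_1$ that forces $p_{11}=0$ and $p_{12}\neq0$ is exactly the key point.

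The one place where the argument as written does not close is the right-module bookkeeping in part (2). The presentation $R[y_2;\sigma_2',d_2'][y_1;\sigma_1',d_1']$ with $\sigma_2',\sigma_1'$ bijective yields $B=\bigoplus_{i,j}y_1^{j}y_2^{i}R$, i.e.\ a free right $R$-module on $\{y_1^{j}y_2^{i}\}$, whereas Definition \ref{DoubleOreDefinition}(b)(ii) demands the basis $\{y_2^{i}y_1^{j}\}$; these are genuinely different orderings, and passing from one to the other requires an extra argument (for instance using $p_{11}=0$, $p_{12}\neq0$ and \eqref{Carvalhoetal2011(1.I)} to re-order monomials, or the associated graded algebra where $\overline{y}_2\overline{y}_1=p_{12}\overline{y}_1\overline{y}_2$). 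You assert the equivalence for the basis $\{y_2^{i}y_1^{j}\}$ directly. Relatedly, in the ``only if'' direction of (1)(b) you flag but do not carry out the extraction of right $R$-freeness of the intermediate algebra $A_1$ from right $R$-freeness of $B$ on $\{y_2^{i}y_1^{j}\}$; your proposal to pass to the associated graded does work, but as it stands this step, together with the re-ordering issue above, is what separates your sketch from a complete proof. Since the result is in any case being quoted from \cite{Carvalhoetal2011}, neither point affects the paper.
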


As noted in the Introduction, Zhang and Zhang \cite{ZhangZhang2009} focus on connected graded regular algebras $B$ of global dimension four generated in degree one. When $B$ is generated by four degree-one elements, the minimal projective resolution of $\Bbbk_B$ has the form \eqref{LISTResolution}; such algebras are said to be \emph{of type} $(14641)$. The next result makes explicit how double extensions produce algebras of this type and isolates the non-Ore cases.

\begin{proposition}[{\cite[Theorem 0.1]{ZhangZhang2009}}]
Let $B$ be a connected graded algebra generated by four degree-one elements. If $B$ is a double extension $R_P[y_1,y_2;\sigma,\tau]$ with $R$ an Artin--Schelter regular algebra of global dimension two, then:
\begin{enumerate}
    \item[\rm (1)] $B$ is a strongly Noetherian, Auslander regular, Cohen--Macaulay domain;
    \item[\rm (2)] $B$ is of type $(14641)$, and in particular $B$ is Koszul;
    \item[\rm (3)] if $B$ is not isomorphic to an Ore extension of an Artin--Schelter regular algebra of global dimension three, then the trimmed double extension $R_P[y_1,y_2;\sigma]$ is isomorphic to an algebra in one of the $26$ families in \cite{ZhangZhang2009}.
\end{enumerate}
\end{proposition}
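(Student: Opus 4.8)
This is \cite[Theorem 0.1]{ZhangZhang2009}, and the plan is to recover its proof in three parts, one for each item. Items (1) and (2) follow by feeding the small base $R$ into the general structure theory of double extensions developed in \cite{ZhangZhang2008}; item (3) is the actual classification, and that is where essentially all the difficulty lies.

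For (1) and (2) I would first identify $R$: being a subalgebra generated by two of the four degree-one generators of $B$, it is connected graded, generated in degree one, and Artin--Schelter regular of global dimension two, hence up to isomorphism one of the two such algebras (a quantum plane $\Bbbk\langle x_1,x_2\rangle/(x_2x_1-qx_1x_2)$, $q\in\Bbbk^{\times}$, or the Jordan plane $\Bbbk\langle x_1,x_2\rangle/(x_2x_1-x_1x_2-x_1^2)$); in particular $R$ is a Noetherian, strongly Noetherian, Auslander regular, Cohen--Macaulay, Koszul domain with Hilbert series $(1-t)^{-2}$. Since the relation \eqref{Carvalhoetal2011(1.I)} is degree-two homogeneous, one normalises $p_{11}=0$ by the change of generators recalled in the Remark following Proposition~\ref{Carvalhoetal2011Proposition1.5} (possible when $p_{12}\neq 1$) and, if $\delta$ or $\tau$ is nonzero, passes to the associated graded $G(B)\cong R_P[\bar y_1,\bar y_2;\bar\sigma]$. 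The homological results of \cite{ZhangZhang2008} then transfer strong Noetherianity, Auslander regularity, Cohen--Macaulayness and the domain property from $R$ to a double extension whose $\sigma$ induces automorphisms, and raise the global dimension by two; this gives (1), with $B$ moreover Artin--Schelter regular of global dimension four. For (2), freeness of $B$ as a left $R$-module on $\{y_1^iy_2^j\}$ (Definition~\ref{DoubleOreDefinition}(a)(iii)) yields $H_B(t)=H_R(t)(1-t)^{-2}=(1-t)^{-4}$; $B$ is Koszul because its quadratic defining relations admit a PBW/Gr\"obner basis extending one for $R$ (compare \cite{HerreraHigueraRubiano2025}); and a Koszul, Artin--Schelter regular algebra of global dimension four with Hilbert series $(1-t)^{-4}$ has Koszul dual of Hilbert series $(1+t)^4=1+4t+6t^2+4t^3+t^4$, so its minimal free resolution of $\Bbbk_B$ has the shape \eqref{LISTResolution}, i.e.\ $B$ is of type $(14641)$.

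For (3), by the reduction above it suffices to classify the trimmed graded double extensions $R_P[y_1,y_2;\sigma]$ with $R$ as above. As $\sigma\colon R\to M_{2\times 2}(R)$ is a degree-zero graded algebra homomorphism, it is determined by the matrices $\sigma(x_1),\sigma(x_2)\in M_{2\times 2}(R_1)$, i.e.\ by finitely many scalars (sixteen before relations) together with $p_{12}$ and $p_{11}$. The requirements that $\sigma$ be an algebra homomorphism (equivalently, that it annihilate the single quadratic relation of $R$), that the compatibility identities \eqref{Carvalhoetal2011(1.III)}--\eqref{Carvalhoetal2011(1.VIII)} of Proposition~\ref{Carvalhoetal2011Proposition1.5} hold --- these reduce to \eqref{Carvalhoetal2011(1.III)}--\eqref{Carvalhoetal2011(1.V)} once $\delta=0$ and $\tau=0$ --- that $p_{12}\neq 0$, and that the invertibility conditions of Definition~\ref{DoubleOreDefinition}(b) hold, together form an explicit finite system of polynomial equations in these scalars. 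Solving it by a case analysis (on the value of $p_{12}$, with $p_{12}=1$ treated separately; on the eigenstructure of the degree-one parts of the $\sigma_{ij}$; and on which of the two algebras $R$ is) produces a finite list of solution families, which is then reduced modulo graded isomorphism. Finally, Propositions~\ref{Carvalhoetal2011Theorems2.2and2.4} pick out precisely the members realisable as iterated Ore extensions --- those with $\sigma_{12}=0$ (giving $B=R[y_1;\cdot][y_2;\cdot]$) or with $\sigma_{21}=0$ and $p_{11}=0$ (giving $B=R[y_2;\cdot][y_1;\cdot]$) --- in which case the inner factor $R[y_1;\cdot]$, resp.\ $R[y_2;\cdot]$, is Artin--Schelter regular of global dimension three; using in addition that a type-$(14641)$ double extension which is an Ore extension of some global-dimension-three Artin--Schelter regular algebra must be of one of these two forms, one concludes that deleting exactly these families from the list leaves the $26$ families $\mathbb{A},\dots,\mathbb{Z}$.

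The main obstacle is solving the polynomial system in part (3): although the parameter count is finite, it is large enough that a brute-force Gr\"obner-basis attack is of little help, and the argument in \cite{ZhangZhang2009} succeeds only by exploiting structure --- the normal form $p_{11}=0$, the invertibility of $\sigma_{11}$ and $\sigma_{22}$, the $q$-commutation (or Jordan relation) of $R$, and, where available, simultaneous diagonalisation of the degree-one parts of the $\sigma_{ij}$ --- to keep the case tree tractable. Two further points need care: showing that the resulting list is irredundant up to graded isomorphism, which requires knowing when two choices of DE-data yield isomorphic $B$; and checking that no surviving family is secretly an Ore extension of a global-dimension-three Artin--Schelter regular algebra in a non-obvious presentation, for which one leans on the fact just quoted that any such presentation of a type-$(14641)$ algebra must already be one of the two iterated forms covered by Proposition~\ref{Carvalhoetal2011Theorems2.2and2.4}.
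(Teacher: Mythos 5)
First, a point of calibration: the paper does not prove this proposition at all --- it is imported verbatim as \cite[Theorem 0.1]{ZhangZhang2009} and used as background --- so your attempt can only be compared with the original source. Against that benchmark, your treatment of parts (1) and (2) follows essentially the route of Zhang--Zhang: identify $R$ as a quantum plane or the Jordan plane, invoke the transfer theorems of \cite{ZhangZhang2008} (strong Noetherianity, Auslander regularity, the Cohen--Macaulay and domain properties, and Artin--Schelter regularity of global dimension four all pass from $R$ to a double extension with ${\rm det}\,\Sigma\neq 0$), and read off $H_B(t)=(1-t)^{-4}$ from the left $R$-module freeness in Definition \ref{DoubleOreDefinition}(a)(iii). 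Your Koszulity argument is sound in substance --- the PBW basis of $B$ forces the six quadratic relations to form a quadratic Gr\"obner basis, whence Koszulity by Priddy --- though citing \cite{HerreraHigueraRubiano2025} for it is historically circular, since that computation postdates and builds on \cite{ZhangZhang2009}; the original argument is resolution-theoretic.

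Part (3), however, is a plan rather than a proof. The entire content of the classification is the explicit solution of the polynomial system (System C, i.e.\ \eqref{ZhangZhang2009(C1ij)}--\eqref{ZhangZhang2009(C6ij)} together with ${\rm det}\,\Sigma\neq 0$) and the reduction of its solution set modulo linear and twist equivalence; this occupies the bulk of \cite{ZhangZhang2009} and cannot be compressed to ``solving it by a case analysis produces a finite list of solution families.'' The two issues you flag at the end --- irredundancy of the list up to graded isomorphism, and ruling out hidden presentations as Ore extensions of global-dimension-three regular algebras --- are precisely the delicate points, and Proposition \ref{Carvalhoetal2011Theorems2.2and2.4} (which itself postdates \cite{ZhangZhang2009}) is the clean tool for the second; but naming them is not the same as resolving them. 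In short: (1) and (2) are correct in outline and match the source; (3) accurately describes the architecture of the original proof while omitting the argument that constitutes it.
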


\subsection{Double extensions of \texorpdfstring{$\Bbbk_Q[x_1,x_2]$}{Lg}}

Assume from now on that $\Bbbk$ is algebraically closed. Then every Artin--Schelter regular algebra of global dimension two is isomorphic either to the Manin plane $\Bbbk_q[x_1,x_2]$ with relation $x_2x_1=qx_1x_2$ (that is, $Q=(q,0)$), or to the Jordan plane $\Bbbk_J[x_1,x_2]$ with relation $x_2x_1=x_1x_2+x_1^2$ (here $J=(1,1)$); see \cite[Theorem 1.4]{Shirikov2005} and \cite[Lemma 2.4]{ZhangZhang2009}. These are precisely the regular algebras of global dimension two; cf. \cite[Examples 1.8 and 1.9]{Rogalski2023}.

More generally, for $Q=(q_{12},q_{11})$ we write
\[
\Bbbk_Q[x_1,x_2]=\Bbbk\{x_1,x_2\}/\langle x_2x_1-q_{11}x_1^2-q_{12}x_1x_2\rangle,
\]
and in our computations $Q$ will be specialized to either $(1,1)$ or $(q,0)$. Zhang and Zhang \cite[Section 3]{ZhangZhang2009} classify regular domains of global dimension four of the form $R_P[y_1,y_2;\sigma]$ (equivalently, they classify $(P,\sigma)$) up to an appropriate equivalence relation, with emphasis on those that are not iterated Ore extensions.

Let $\sigma:R\to M_2(R)$ be a graded algebra homomorphism, and write
\begin{equation}\label{ZhangZhang2009(E3.0.1)}
\sigma_{ij}(x_s)=\sum_{t=1}^2 a_{ijst}x_t,\quad \text{for all } i,j,s\in\{1,2\},\ \ a_{ijst}\in\Bbbk.
\end{equation}
In the trimmed case ($\delta=0$), the relation \eqref{Carvalhoetal2011(1.II)} yields the \emph{mixing relations}:
\begin{align}
y_1x_1 = &\ \sigma_{11}(x_1)y_1 + \sigma_{12}(x_1)y_2 \notag \\
= &\ a_{1111}x_1y_1 + a_{1112}x_2y_1 + a_{1211}x_1y_2 + a_{1212}x_2y_2, \label{ZhangZhangMR11} \\
y_1x_2 = &\ \sigma_{11}(x_2)y_1 + \sigma_{12}(x_2)y_2 \notag  \\
= &\ a_{1121}x_1y_1 + a_{1122}x_2y_1 + a_{1221}x_1y_2 + a_{1222}x_2y_2, \label{ZhangZhangMR12} \\
y_2x_1 = &\ \sigma_{21}(x_1)y_1 + \sigma_{22}(x_1)y_2 \notag  \\
= &\ a_{2111} x_1 y_1 + a_{2112}x_2y_1 + a_{2211}x_1y_2 + a_{2212}x_2y_2, \quad{\rm and} \label{ZhangZhangMR21} \\
y_2 x_2 = &\ \sigma_{21}(x_2)y_1 + \sigma_{22}(x_2)y_2 \notag \\
= &\ a_{2121}x_1y_1 + a_{2122}x_2y_1 + a_{2221}x_1y_2 + a_{2222}x_2y_2. \label{ZhangZhangMR22}
\end{align}
Alongside these, there are the two \emph{non-mixing} quadratic relations
\begin{align}
    x_2 x_1 = &\ q_{12}x_1x_2 + q_{11}x_1^2, \label{ZhangZhangNRx} \\
    y_2 y_1 = &\ p_{12}y_1y_2 + p_{11}y_1^2. \label{ZhangZhangNRy}
\end{align}

Introduce the matrices
\begin{equation}\label{ZhangZhang2009(E3.0.2)}
\Sigma_{ij}:=\begin{bmatrix}a_{ij11}&a_{ij12}\\ a_{ij21}&a_{ij22}\end{bmatrix},
\qquad
\Sigma:=\begin{bmatrix}\Sigma_{11}&\Sigma_{12}\\ \Sigma_{21}&\Sigma_{22}\end{bmatrix}
=\begin{bmatrix} 
    a_{1111} & a_{1112} & a_{1211} & a_{1212} \\
    a_{1121} & a_{1122} & a_{1221} & a_{1222} \\
    a_{2111} & a_{2112} & a_{2211} & a_{2212} \\
    a_{2121} & a_{2122} & a_{2221} & a_{2222}
\end{bmatrix}.
\end{equation}
Since $\sigma$ is graded, it is determined by the coefficients $a_{ijst}$, hence by $\Sigma$. A second matrix is obtained by regrouping the same coefficients:
\begin{equation}\label{ZhangZhang2009(E3.0.2)'}
M_{ij}:=\begin{bmatrix}a_{11ij}&a_{12ij}\\ a_{21ij}&a_{22ij}\end{bmatrix},
\qquad
M:=\begin{bmatrix}M_{11}&M_{12}\\ M_{21}&M_{22}\end{bmatrix}.
\end{equation}
In particular, $M$ is a rearrangement of $\Sigma$, and $\Sigma$ is invertible if and only if $M$ is invertible.

\begin{remark}[{\cite[p. 388--390]{ZhangZhang2009}}]
Using multiplicativity of $\sigma$, for $i,j,f,g$ one has
\begin{align*}
\sigma_{ij}(x_fx_g)
= &\ \sum_{p=1}^{2}\sigma_{ip}(x_f)\sigma_{pj}(x_g)
= \sum_{p,s,t=1}^{2}(a_{ipfs}a_{pjgt})\,x_sx_t\\
= &\ \left(\sum_{p=1}^{2}a_{ipf1}a_{pjg1}\right)x_1^2
+\left(\sum_{p=1}^{2}a_{ipf1}a_{pjg2}\right)x_1x_2\\
&\ +\left(\sum_{p=1}^{2}a_{ipf2}a_{pjg1}\right)x_2x_1
+\left(\sum_{p=1}^{2}a_{ipf2}a_{pjg2}\right)x_2^2.
\end{align*}
Since $x_2x_1=q_{11}x_1^2+q_{12}x_1x_2$ in $R$, this can be rewritten as
\begin{align}
\sigma_{ij}(x_fx_g)=&\ \left[\left( a_{i1f1}a_{1jg1} + a_{i2f_1}a_{2jg1}   \right) + q_{11}\left(a_{i1f2}a_{1jg1} + a_{i2f2}a_{2jg1}\right)\right]x_1^{2} \notag \\
&\ + \left[\left( a_{i1f1}a_{1jg2} + a_{i2f1} a_{2jg2}\right) + q_{12}\left(a_{i1f2} a_{1jg1} + a_{i2f2} a_{2jg1}\right) \right] x_1x_2 \notag \\
&\ + \left(a_{i1f2} a_{1jg2} + a_{i2f2} a_{2jg2}\right)x_2^{2}. \label{ZhangZhang2009(E3.0.3)}
\end{align}
In particular, $\Bbbk$-linearity gives
\begin{equation}\label{ZhangZhang2009(E3.0.4)}
\sigma_{ij}(x_2x_1)=q_{11}\sigma_{ij}(x_1x_1)+q_{12}\sigma_{ij}(x_1x_2)\quad \text{for all }i,j\in\{1,2\}.
\end{equation}
Comparing coefficients of $x_1^2$, $x_1x_2$ and $x_2^2$ leads to the constraints \eqref{ZhangZhang2009(C1ij)}, \eqref{ZhangZhang2009(C2ij)} and \eqref{ZhangZhang2009(C3ij)} below.
\end{remark}

The coefficients of $x_1^{2}$ in \eqref{ZhangZhang2009(E3.0.4)} yield
\begin{align}
   &\ \left(a_{i121} a_{1j11} + a_{i221} a_{2j11}\right) + q_{11}\left(a_{i122} a_{1j11} + a_{i222}a_{2j11}\right) \notag \\
    &\ \quad \ = q_{11} \left[ \left(a_{i111} a_{1j11} + a_{i211} a_{2j11}\right) + q_{11} \left(a_{i112} a_{1j11} + a_{i212}a_{2j11}\right) \right] \notag \\
    &\ \quad \quad \ + q_{12} \left[\left(a_{i111} a_{1j21} + a_{i211} a_{2j21}\right) + q_{11}\left(a_{i112} a_{1j21} + a_{i212}a_{2j21}\right)\right]. \label{ZhangZhang2009(C1ij)}
\end{align}
Similarly, comparing coefficients of $x_1x_2$ gives
\begin{align}
&\ \left(a_{i121} a_{1j12} + a_{i221} a_{2j12}\right) + q_{12}\left(a_{i122} a_{1j11} + a_{i222} a_{2j11} \right) \notag \\
 &\ \quad \ = q_{11} \left[\left(a_{i111} a_{1j12} + a_{i211} a_{2j12}\right) + q_{12} \left(a_{i112} a_{1j11} + a_{i212} a_{2j11} \right)\right] \notag \\
  &\ \quad \quad \ + q_{12} \left[ \left( a_{i111} a_{1j22} + a_{i211} a_{2j22}\right) + q_{12} \left(a_{i112} a_{1j21} + a_{i212} a_{2j21}\right)\right]. \label{ZhangZhang2009(C2ij)}
\end{align}
Finally, the $x_2^2$-coefficients satisfy
\begin{align}
   &\ \left( a_{i122} a_{1j12} + a_{i222} a_{2j12} \right) \notag \\ 
     &\ \quad \ = q_{11} \left(a_{i112} a_{1j12} + a_{i212} a_{2j12}\right) + q_{12} \left( a_{i112} a_{1j22} + a_{i212} a_{2j22}\right). \label{ZhangZhang2009(C3ij)}
\end{align}

Applying \eqref{Carvalhoetal2011(1.I)}, \eqref{Carvalhoetal2011(1.II)} and the identities in Proposition \ref{Carvalhoetal2011Proposition1.5} to $r=x_1,x_2$ produces further constraints among the coefficients $a_{ijst}$. For $i,f,g,s,t\in\{1,2\}$ one computes
\begin{align}
\sigma_{fg}(\sigma_{st}(x_i))
=&\ \sigma_{fg}\!\left(\sum_{w=1}^2 a_{stiw}x_w\right)
=\sum_{w=1}^2 a_{stiw}\sigma_{fg}(x_w)\notag\\
=&\ \sum_{w=1}^2 a_{stiw}\sum_{j=1}^2 a_{fgwj}x_j
=\sum_{j=1}^2\left(a_{sti1}a_{fg1j}+a_{sti2}a_{fg2j}\right)x_j. \label{ZhangZhang2009(E3.0.5)}
\end{align}
In the usual parameter choices $P=(1,1)$ or $P=(p,0)$, the relations in Proposition \ref{Carvalhoetal2011Proposition1.5}, combined with \eqref{ZhangZhang2009(E3.0.5)}, are equivalent to the constraints \eqref{ZhangZhang2009(C4ij)}--\eqref{ZhangZhang2009(C6ij)}:
\begin{align}
  &\  \left( a_{11i1} a_{211j} + a_{11i2} a_{212j}\right) + p_{11}\left(a_{11i2} a_{222j} \right) \notag \\
  &\ \quad \ = p_{11} \left( a_{11i1} a_{111j} + a_{11i2} a_{112j}\right) + p_{11}^2\left( a_{11i1} a_{121j} + a_{11i2} a_{122j} \right) \notag \\
   &\ \quad \quad \ + p_{12} \left(a_{21i1} a_{111j} + a_{21i2} a_{112j} \right) + p_{11} p_{12} \left( a_{21i1} a_{121j} + a_{21i2} a_{122j} \right), \label{ZhangZhang2009(C4ij)} \\
   &\ (a_{12i1} a_{211j} + a_{12i2} a_{212j}) + p_{12} \left( a_{11i1} a_{221j} + a_{11i2} a_{222j} \right) \notag \\
     &\ \quad \ = p_{11} \left( a_{22i1} a_{111j} + a_{12i2} a_{112j}\right) + p_{11}p_{12} \left( a_{11i1} a_{121j} + a_{11i2} a_{122j} \right) \notag \\
      &\ \quad \quad \ + p_{12} \left( a_{22i1} a_{111j} + a_{22i2} a_{112j} \right) + p_{12}^2 \left( a_{21i1} a_{121j} + a_{21i2} a_{122j} \right),  \label{ZhangZhang2009(C5ij)} \\
   &\  \left( a_{12i1} a_{221}j + a_{12i2} a_{222j} \right) \notag \\
       &\ \quad \ p_{11}\left( a_{12i1} a_{121j} + a_{12i2} a_{122j} \right) + p_{12}\left( a_{22i1} a_{121j} + a_{22i2} a_{122j} \right). \label{ZhangZhang2009(C6ij)}
\end{align}
One may view \eqref{ZhangZhang2009(C1ij)}--\eqref{ZhangZhang2009(C3ij)} as mirroring \eqref{ZhangZhang2009(C4ij)}--\eqref{ZhangZhang2009(C6ij)} in a symmetric fashion; the preceding discussion is summarized in the next statement.

\begin{proposition}[{\cite[Proposition 3.1]{ZhangZhang2009}}]
\begin{enumerate}
    \item[\rm (1)] Let $R_P[y_1,y_2;\sigma]$ be a right double extension, and define $\{\Sigma,P,Q\}$ relative to the $\Bbbk$-basis $\{x_1,x_2,y_1,y_2\}$ as above. Then the six equations \eqref{ZhangZhang2009(C1ij)}--\eqref{ZhangZhang2009(C6ij)} hold. Moreover, ${\rm det}\,\Sigma\neq 0$ if and only if $R_P[y_1,y_2;\sigma]$ is a double extension.
    \item[\rm (2)] Conversely, fix $\Sigma$ as in \eqref{ZhangZhang2009(E3.0.2)} and parameters $P=(p_{12},p_{11})$, $Q=(q_{12},q_{11})$ with $p_{12}q_{12}\neq 0$. If \eqref{ZhangZhang2009(C1ij)}--\eqref{ZhangZhang2009(C6ij)} hold and ${\rm det}\,\Sigma\neq 0$, then the corresponding six defining relations determine a double extension $R_P[y_1,y_2;\sigma]$.
\end{enumerate}
\end{proposition}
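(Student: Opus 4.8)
The plan is to read the six identities \eqref{ZhangZhang2009(C1ij)}--\eqref{ZhangZhang2009(C6ij)} as an explicit coordinate translation of the hypotheses hidden in Definition \ref{DoubleOreDefinition} and Proposition \ref{Carvalhoetal2011Proposition1.5}: the first three, \eqref{ZhangZhang2009(C1ij)}--\eqref{ZhangZhang2009(C3ij)}, should record that $\sigma$ respects the single defining relation of $R=\Bbbk_Q[x_1,x_2]$ and hence extends to an algebra homomorphism $R\to M_2(R)$; the last three, \eqref{ZhangZhang2009(C4ij)}--\eqref{ZhangZhang2009(C6ij)}, should record the consistency of the mixing and non-mixing relations, i.e.\ that $R_P[y_1,y_2;\sigma]$ is a \emph{right} double extension; and $\det\Sigma\neq 0$ should be precisely the extra invertibility that upgrades a right double extension to a full double extension. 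Throughout I would use the expansions of $\sigma_{ij}(x_fx_g)$ and $\sigma_{fg}(\sigma_{st}(x_i))$ already recorded in \eqref{ZhangZhang2009(E3.0.3)} and \eqref{ZhangZhang2009(E3.0.5)}. For the first claim of part (1): a right double extension has $\sigma$ an algebra homomorphism (by \cite[Lemma 1.7]{ZhangZhang2008}) and graded by hypothesis, so I would apply $\sigma$ to $x_2x_1=q_{11}x_1^2+q_{12}x_1x_2$, expand both sides by \eqref{ZhangZhang2009(E3.0.3)}, and compare the coefficients of $x_1^2$, $x_1x_2$, $x_2^2$; this gives \eqref{ZhangZhang2009(C1ij)}--\eqref{ZhangZhang2009(C3ij)}. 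Likewise, a right double extension satisfies the six identities of Proposition \ref{Carvalhoetal2011Proposition1.5}; with $\delta=0$ and $\tau=\{0,0,0\}$ the last three are vacuous, and evaluating the remaining three at $x_1$ and $x_2$ — using \eqref{ZhangZhang2009(E3.0.5)} to compute the composites $\sigma_{fg}\circ\sigma_{st}$ — gives \eqref{ZhangZhang2009(C4ij)}--\eqref{ZhangZhang2009(C6ij)}.

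The determinant criterion is the substantive point. The mixing relations \eqref{ZhangZhangMR11}--\eqref{ZhangZhangMR22} say that, in degree two, the quadruple $(y_1x_1,y_1x_2,y_2x_1,y_2x_2)$ equals $\Sigma$ applied to $(x_1y_1,x_2y_1,x_1y_2,x_2y_2)$. In the left $R$-module basis $\{y_1^iy_2^j\}$ the elements $x_1y_1,x_2y_1,x_1y_2,x_2y_2$ are linearly independent, while $y_1x_1,y_1x_2,y_2x_1,y_2x_2$ lie in their span with coordinate matrix $\Sigma$; hence the two spans coincide precisely when $\det\Sigma\neq 0$. This coincidence is the degree-two instance of the equality $y_1R+y_2R+R=Ry_1+Ry_2+R$ of Definition \ref{DoubleOreDefinition}(b)(iii). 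When $\det\Sigma\neq 0$ one can invert these degree-two relations and so move each $y_i$ freely to either side of any product of $x$'s, which gives \ref{DoubleOreDefinition}(b)(iii) in all degrees and shows that $\{y_2^iy_1^j\}$ spans $B$ as a right $R$-module; since a free right $R$-module with basis $\{y_2^iy_1^j\}$ has the same Hilbert series as $B$ (a free left $R$-module with basis $\{y_1^iy_2^j\}$), this spanning set is a free basis — Definition \ref{DoubleOreDefinition}(b)(ii) — and $p_{12}\neq 0$ follows, for otherwise \eqref{ZhangZhangNRy} would read $y_2y_1=p_{11}y_1^2$, contradicting the linear independence of the basis elements $y_2y_1$ and $y_1^2$. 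Conversely, any double extension satisfies \ref{DoubleOreDefinition}(b)(iii), hence its degree-two instance, so $\det\Sigma\neq 0$. This proves (1). One can alternatively run all of (1) through a Bergman Diamond Lemma (Gr\"obner-Shirshov) analysis of the six defining relations, whose only overlap ambiguities are the words $y_ix_2x_1$ and $y_2y_1x_s$: these resolve exactly when \eqref{ZhangZhang2009(C1ij)}--\eqref{ZhangZhang2009(C3ij)} and \eqref{ZhangZhang2009(C4ij)}--\eqref{ZhangZhang2009(C6ij)} hold, and the corresponding irreducible words are the double-extension PBW basis (compare \cite{HerreraHigueraRubiano2025}).

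For part (2), fix $\Sigma$, $P=(p_{12},p_{11})$, $Q=(q_{12},q_{11})$ with $p_{12}q_{12}\neq 0$, and assume \eqref{ZhangZhang2009(C1ij)}--\eqref{ZhangZhang2009(C6ij)} together with $\det\Sigma\neq 0$. The equations \eqref{ZhangZhang2009(C1ij)}--\eqref{ZhangZhang2009(C3ij)} are exactly the conditions for the assignment $x_s\mapsto\bigl(\sigma_{ij}(x_s)\bigr)_{ij}\in M_2(R)$, with $\sigma_{ij}(x_s)=\sum_t a_{ijst}x_t$, to annihilate the relator $x_2x_1-q_{11}x_1^2-q_{12}x_1x_2$ of $R$; hence it descends to a graded algebra homomorphism $\sigma\colon R\to M_2(R)$, and we set $\delta=0$, $\tau=\{0,0,0\}$. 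Let $B$ be the algebra generated by $R,y_1,y_2$ subject to \eqref{ZhangZhangNRy} and the mixing relations \eqref{ZhangZhangMR11}--\eqref{ZhangZhangMR22}. Then \eqref{ZhangZhang2009(C4ij)}--\eqref{ZhangZhang2009(C6ij)} are the evaluations at $x_1,x_2$ of the first three identities of Proposition \ref{Carvalhoetal2011Proposition1.5}, and since the overlap ambiguities of this presentation involve $R$ only through the generators $x_1,x_2$, these evaluations suffice for the full conclusion of that proposition; thus $B=R_P[y_1,y_2;\sigma]$ is a right double extension. Finally $p_{12}\neq 0$ and $\det\Sigma\neq 0$, via the criterion established in part (1), make $B$ a double extension.

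I expect the determinant step to be the main obstacle. Matching \eqref{ZhangZhang2009(C1ij)}--\eqref{ZhangZhang2009(C6ij)} to the axioms of Definition \ref{DoubleOreDefinition} and Proposition \ref{Carvalhoetal2011Proposition1.5} is essentially a dictionary exercise once the expansions \eqref{ZhangZhang2009(E3.0.3)} and \eqref{ZhangZhang2009(E3.0.5)} are in hand, whereas showing that $\det\Sigma\neq 0$ is precisely what separates a right double extension from a full one requires carefully tracking the two opposite module structures — $\{y_1^iy_2^j\}$ on the left and $\{y_2^iy_1^j\}$ on the right — and invoking a PBW/Hilbert-series argument for the right-module freeness of Definition \ref{DoubleOreDefinition}(b)(ii). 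A secondary, smaller point is the reduction of the endomorphism identities of Proposition \ref{Carvalhoetal2011Proposition1.5} to their values on the generators $x_1,x_2$, again justified by the fact that the overlap ambiguities of the defining presentation only involve those generators.
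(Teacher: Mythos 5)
Your derivation of the six constraint equations is exactly the paper's: the paper obtains \eqref{ZhangZhang2009(C1ij)}--\eqref{ZhangZhang2009(C3ij)} by applying the algebra homomorphism $\sigma$ to the relator of $R$ via \eqref{ZhangZhang2009(E3.0.3)}--\eqref{ZhangZhang2009(E3.0.4)} and comparing coefficients, and \eqref{ZhangZhang2009(C4ij)}--\eqref{ZhangZhang2009(C6ij)} by evaluating the identities of Proposition \ref{Carvalhoetal2011Proposition1.5} (with $\delta=0$, $\tau=\{0,0,0\}$) at $r=x_1,x_2$ using \eqref{ZhangZhang2009(E3.0.5)}. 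That is all the paper actually proves; the determinant criterion and the converse in part (2) are taken on citation from Zhang--Zhang, so your arguments there go beyond the text. They are broadly in the right spirit (invertibility of $\Sigma$ is the degree-two instance of $y_1R+y_2R+R=Ry_1+Ry_2+R$, propagated by a Hilbert-series count), but two steps need repair. First, your claim that checking the identities of Proposition \ref{Carvalhoetal2011Proposition1.5} on the generators $x_1,x_2$ suffices is not automatic: those are identities of $\Bbbk$-linear endomorphisms (composites of the individual entries $\sigma_{ij}$, which are not algebra maps), so ``check on generators'' only becomes legitimate once you have genuinely set up the Bergman diamond-lemma argument you gesture at, with the overlaps $y_ix_2x_1$ and $y_2y_1x_s$ resolving; as written this is an assertion, not a proof. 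Second, your derivation of $p_{12}\neq 0$ is circular: you deduce it from the linear independence of $y_2y_1$ and $y_1^2$ in the right-module basis $\{y_2^iy_1^j\}$, but establishing that this set even spans $B$ as a right $R$-module already requires rewriting $y_1y_2=p_{12}^{-1}(y_2y_1-p_{11}y_1^2)$, i.e.\ requires $p_{12}\neq 0$ beforehand. Indeed $\det\Sigma$ does not involve $p_{12}$ at all, so $p_{12}\neq 0$ cannot be a consequence of $\det\Sigma\neq 0$; it is a separate axiom (Definition \ref{DoubleOreDefinition}(b)(i)) in part (1) and an explicit hypothesis ($p_{12}q_{12}\neq 0$) in part (2), and should be treated as such rather than derived.
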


Following \cite{ZhangZhang2009}, we refer to the system consisting of \eqref{ZhangZhang2009(C1ij)}--\eqref{ZhangZhang2009(C6ij)} together with ${\rm det}\,\Sigma\neq 0$ as \emph{System C}. A \emph{$C$-solution} is a matrix $\Sigma$ whose entries $a_{ijst}$ satisfy System C.

\begin{proposition}
Let $\Sigma$ be a $C$-solution and set $B=(\Bbbk_Q[x_1,x_2])_P[y_1,y_2;\sigma]$, where $\sigma$ is determined by $\Sigma$. Let $0\neq h\in\Bbbk$.
\begin{enumerate}
    \item[\rm (1)] The algebra $B$ is $\mathbb{Z}^2$-graded. If $\gamma$ is defined by $\gamma(x_i)=x_i$ and $\gamma(y_i)=hy_i$, then $\gamma$ extends to a graded automorphism of $B$.
    \item[\rm (2)] The matrix $h\Sigma$ is again a $C$-solution. Let $\sigma'$ be the algebra homomorphism determined by $h\Sigma$. Then
    $B':=(\Bbbk_Q[x_1,x_2])_P[y_1,y_2;\sigma']$
    is the graded twist of $B$ by $\gamma$ in the sense of Zhang \cite{Zhang1996}.
\end{enumerate}
\end{proposition}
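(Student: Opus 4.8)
The statement has three ingredients — the $\mathbb{Z}^2$-grading, the automorphism $\gamma$, and the identification of $B'$ with a graded twist of $B$ — and each can be approached directly from the defining relations \eqref{ZhangZhangMR11}--\eqref{ZhangZhangNRy}. For (1), give the free algebra $\Bbbk\{x_1,x_2,y_1,y_2\}$ the $\mathbb{Z}^2$-grading with $\deg x_i=(1,0)$ and $\deg y_i=(0,1)$. Then every defining relation of $B$ is bihomogeneous: \eqref{ZhangZhangNRx} lies in bidegree $(2,0)$, \eqref{ZhangZhangNRy} in bidegree $(0,2)$, and each mixing relation \eqref{ZhangZhangMR11}--\eqref{ZhangZhangMR22} in bidegree $(1,1)$. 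Hence the defining ideal is bihomogeneous and $B$ inherits a connected $\mathbb{N}^2$-grading. The map $\gamma$ is then precisely the algebra endomorphism acting on the bihomogeneous component $B_{(i,j)}$ by multiplication by $h^{j}$: it is multiplicative since $h^{j+j'}=h^jh^{j'}$, invertible since $h\neq0$, and (equivalently, checking on the free algebra) it sends \eqref{ZhangZhangNRx}, \eqref{ZhangZhangNRy} and the mixing relations to $1$, $h^2$ and $h$ times themselves, so it preserves the ideal. Thus $\gamma$ descends to a $\mathbb{Z}^2$-graded automorphism of $B$.

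For the first assertion of (2) no computation is needed: every monomial occurring in System C \eqref{ZhangZhang2009(C1ij)}--\eqref{ZhangZhang2009(C6ij)} is a product of exactly two of the coefficients $a_{ijst}$ (the quantities $p_{12},p_{11},q_{12},q_{11}$ being fixed scalars), so each of the six equations equates two polynomials homogeneous of degree $2$ in the entries of $\Sigma$; such an identity survives the substitution $\Sigma\mapsto h\Sigma$, which multiplies both sides by $h^2$. Since also $\det(h\Sigma)=h^4\det\Sigma\neq0$, the matrix $h\Sigma$ is a $C$-solution, and $B'$ is a (right) double extension by the proposition characterising double extensions via $C$-solutions.

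For the twist statement, view $B$ with the coarser $\mathbb{N}$-grading by $x$-degree — the grading for which $\gamma$ remains a graded automorphism and for which the twist is non-trivial (with respect to the $y$-grading, $\gamma$ is the grading automorphism and the twist is trivial). Form Zhang's graded twist $B^{\gamma}$ \cite{Zhang1996}, and claim that $x_i\mapsto x_i$, $y_i\mapsto y_i$ defines an isomorphism $B'\to B^{\gamma}$. Well-definedness is a relation check in $B^{\gamma}$: one has $x_s\ast x_{s'}=x_sx_{s'}$ and $y_t\ast y_{t'}=y_ty_{t'}$, so \eqref{ZhangZhangNRx} and \eqref{ZhangZhangNRy} hold verbatim, whereas $y_i\ast x_j=h\,(y_ix_j)$ while the products $x_s\ast y_t$ on the right of the mixing relations are unchanged; hence each mixing relation of $B$ becomes, in $B^{\gamma}$, the same relation with every $a_{ijst}$ replaced by $ha_{ijst}$, which is exactly a defining relation of $\sigma'$. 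The homomorphism $B'\to B^{\gamma}$ so obtained is graded and surjective, and it is injective by a Hilbert-series count: $B^{\gamma}$ has the same underlying graded vector space as $B$, while $B$ and $B'$ — both double extensions of $\Bbbk_Q[x_1,x_2]$, since $\det\Sigma\neq0$ and $\det(h\Sigma)\neq0$ — are free left $\Bbbk_Q[x_1,x_2]$-modules on $\{y_1^iy_2^j\}$ and so share the same Hilbert series, whence a graded surjection between them is an isomorphism.

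The homogeneity checks in (1) and the first part of (2) are routine; the delicate point is the twist. There one must pin down Zhang's conventions precisely enough that twisting by $\gamma$ with respect to the $x$-grading produces the structure matrix $h\Sigma$ exactly (rather than $h^{-1}\Sigma$), and then upgrade the relation-preserving map $B'\to B^{\gamma}$ to an isomorphism — which is exactly where the hypotheses $\det\Sigma\neq0$ and $\det(h\Sigma)\neq0$ are indispensable, furnishing the common PBW Hilbert series of $B$ and $B'$.
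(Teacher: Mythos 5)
The paper does not actually prove this proposition: it is imported verbatim from Zhang--Zhang (it is \cite[Lemma 3.3]{ZhangZhang2009}, as the definition immediately following it indicates), so there is no in-paper argument to compare against. Judged on its own, your proof is correct and self-contained. Part (1) is exactly the bihomogeneity of the six defining relations, and your observation that every monomial in System C \eqref{ZhangZhang2009(C1ij)}--\eqref{ZhangZhang2009(C6ij)} is quadratic in the $a_{ijst}$ (with $p,q$ as fixed scalars) together with $\det(h\Sigma)=h^4\det\Sigma$ settles the first half of (2). For the twist, your computation is right once the convention $a\ast b=\gamma^{\deg b}(a)\,b$ is fixed: with the $x$-degree grading this sends $y_ix_j\mapsto h\,y_ix_j$ while fixing $x_sy_t$, $x_sx_{s'}$ and $y_ty_{t'}$, producing precisely the relations of $\sigma'=h\Sigma$; with the opposite (left) convention one gets $h^{-1}\Sigma$, which is harmless since $h$ ranges over all of $\Bbbk^\times$, and you flag this explicitly. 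Two small remarks. First, the same computation works with the standard connected grading $\deg x_i=\deg y_i=1$ (the one Zhang--Zhang use), so passing to the coarser $x$-degree grading is a choice, not a necessity; your aside that the twist relative to the $y$-grading is trivial is also correct, since twisting by the automorphism that scales degree $n$ by $h^n$ yields an algebra isomorphic to the original via $a\mapsto h^{n(n-1)/2}a$. Second, your surjection-plus-Hilbert-series argument for injectivity is the right mechanism and correctly isolates where $\det\Sigma\neq0$ and $\det(h\Sigma)\neq0$ enter: they guarantee that both $B$ and $B'$ are free left $\Bbbk_Q[x_1,x_2]$-modules on $\{y_1^iy_2^j\}$ and hence share locally finite graded dimensions.
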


Although $B$ and its twist $B^{\gamma}$ need not be isomorphic in general, they share many structural features: the category of graded $B$-modules is equivalent to the category of graded $B^{\gamma}$-modules \cite[Theorem 1.1]{Zhang1996}.

\begin{definition}[{\cite[Definition 3.4]{ZhangZhang2009}}]
\begin{enumerate}
    \item[\rm (i)] Two matrices $\Sigma$ and $\Sigma'$ are \emph{twist equivalent} if $\Sigma'=h\Sigma$ for some $0\neq h\in\Bbbk$; in this case $\Sigma'$ is called a \emph{twist} of $\Sigma$. Replacing $\Sigma$ by a twist (with $Q$ and $P$ fixed) produces another double extension; see \cite[Lemma 3.3(b)]{ZhangZhang2009}.
    \item[\rm (ii)] Triples $(\Sigma,Q,P)$ and $(\Sigma',Q',P')$ are \emph{linearly equivalent} if there exists a graded algebra isomorphism
    \[
    (\Bbbk_Q[x_1,x_2])_P[y_1,y_2;\sigma]\ \longrightarrow\ (\Bbbk_{Q'}[x_1',x_2'])_{P'}[y_1',y_2';\sigma']
    \]
    sending $\Bbbk x_1+\Bbbk x_2$ onto $\Bbbk x_1'+\Bbbk x_2'$ and $\Bbbk y_1+\Bbbk y_2$ onto $\Bbbk y_1'+\Bbbk y_2'$.
    When $Q=Q'$ and $P=P'$, we simply say that $\Sigma$ and $\Sigma'$ are linearly equivalent.
    \item[\rm (iii)] Triples $(\Sigma,Q,P)$ and $(\Sigma',Q',P')$ are \emph{equivalent} if $(\Sigma,Q,P)$ is linearly equivalent to $(h\Sigma',Q',P')$ for some $0\neq h\in\Bbbk$.
\end{enumerate}
\end{definition}

It is straightforward that twist equivalence and linear equivalence are equivalence relations, and the same holds for the induced notion of equivalence of triples. Zhang and Zhang \cite{ZhangZhang2009} classify the algebras $R_P[y_1,y_2;\sigma]$ up to isomorphism (and even up to twist) by classifying the associated matrices $\Sigma$ up to (linear) equivalence.

\section{Center and central subalgebras of double extensions}\label{DAThreegenerators}

In this section we compute, in the general setting of double extensions, the commutation relations among the generating variables, with the purpose of extracting structural information about the center. Nevertheless, even under a PBW-type presentation, the amount of symbolic manipulation required in the “universal” parameter-dependent case quickly becomes prohibitive: the mixed relations involve many coefficients and produce a combinatorial explosion of overlaps and reductions. For this reason, the center (or central subalgebras) must be determined separately for each of the double extensions under consideration, that is, on a case-by-case basis. Our computational strategy is to fix a rewriting system that pushes the “bad pairs’’ into an ordered normal form and then to compute normal forms and commutators inside a free algebra over a suitable fraction field of parameters. This procedure is implemented in \textsc{SageMath} through the following code, which automates the reduction to normal form and the computation of commutators:
\begin{verbatim}
from sage.all import *

R = QQ[
    'q12','q11','p12','p11',
    # a_{1111} ... a_{2222}
    'a1111','a1112','a1211','a1212',
    'a1121','a1122','a1221','a1222',
    'a2111','a2112','a2211','a2212',
    'a2121','a2122','a2221','a2222'
]
K = FractionField(R)
(q12,q11,p12,p11,
 a1111,a1112,a1211,a1212,
 a1121,a1122,a1221,a1222,
 a2111,a2112,a2211,a2212,
 a2121,a2122,a2221,a2222) = K.gens()

A.<x1,x2,y1,y2> = FreeAlgebra(K, 4)

rules = {
    # intern in x's y y's
    ('x2','x1'): q12*x1*x2 + q11*x1^2,
    ('y2','y1'): p12*y1*y2 + p11*y1^2,

    # mix
    ('y1','x1'): a1111*x1*y1 + a1112*x2*y1 + a1211*x1*y2 + a1212*x2*y2,
    ('y1','x2'): a1121*x1*y1 + a1122*x2*y1 + a1221*x1*y2 + a1222*x2*y2,
    ('y2','x1'): a2111*x1*y1 + a2112*x2*y1 + a2211*x1*y2 + a2212*x2*y2,
    ('y2','x2'): a2121*x1*y1 + a2122*x2*y1 + a2221*x1*y2 + a2222*x2*y2,
}

GEN = {'x1': x1, 'x2': x2, 'y1': y1, 'y2': y2}

def _term_dict(f):
    try:
        return f.monomial_coefficients()
    except AttributeError:
        return f.dict()

def _word_to_elem(word_letters):
    e = A.one()
    for s in word_letters:
        e *= GEN[s]
    return e

def reduce_once(f):
    """Apply ONE rewrite to each monomial if possible."""
    out = A.zero()
    changed = False
    for mon, coeff in _term_dict(A(f)).items():
        letters = [str(a) for a in mon.to_word()]
        done = False
        for i in range(len(letters)-1):
            pair = (letters[i], letters[i+1])
            if pair in rules:
                prefix = _word_to_elem(letters[:i])
                suffix = _word_to_elem(letters[i+2:])
                out += coeff * (prefix * rules[pair] * suffix)
                changed = True
                done = True
                break
        if not done:
            out += coeff * _word_to_elem(letters)
    return out, changed

def NF(f, max_steps=20000):
    """Normal form by iterated rewriting."""
    g = A(f)
    for _ in range(max_steps):
        g2, ch = reduce_once(g)
        if not ch:
            return g
        g = g2
    raise RuntimeError("It didn't finish: increase max_steps
    or check if the rules produce loops.")

def comm(u,v):
    return NF(u*v - v*u)

print("NF(x2*x1) =", NF(x2*x1))
print("NF(y2*y1) =", NF(y2*y1))
print("NF(y1*x2) =", NF(y1*x2))
print("[y1,x2]   =", comm(y1,x2))
print("[x2,x1]   =", comm(x2,x1))
\end{verbatim}
Because of the size of the computations involved, we will only present two explicit examples below: one corresponding to an algebra whose center can be completely described, and another in which the same approach allows us to exhibit only a nontrivial central subalgebra.

\begin{definition}
For $m\ge1$ define the $q$-integer and $q$-factorial by
\[
[m]_q:=1+q+\cdots+q^{m-1},\qquad
[m]_q!:=\prod_{j=1}^{m}[j]_q,\qquad [0]_q!:=1.
\]
\end{definition}

\begin{proposition}\label{prop:x2x1n}
For every integer $n\ge 1$,
\[
x_2x_1^{\,n}
=
q_{11}[n]_{q_{12}}\,x_1^{\,n+1}
\;+\;
q_{12}^{\,n}\,x_1^{\,n}x_2.
\]
\end{proposition}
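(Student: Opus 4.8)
The plan is to argue by induction on $n$, the only inputs being the single quadratic relation $x_2x_1=q_{11}x_1^{2}+q_{12}x_1x_2$ of $\Bbbk_Q[x_1,x_2]$ (that is, \eqref{ZhangZhangNRx}) together with the elementary telescoping identity $[n+1]_{q}=[n]_{q}+q^{\,n}$ for $q$-integers, which is immediate from $[m]_q=1+q+\cdots+q^{m-1}$.

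First I would dispatch the base case $n=1$: the asserted formula reads $x_2x_1=q_{11}[1]_{q_{12}}x_1^{2}+q_{12}x_1x_2$, and since $[1]_{q_{12}}=1$ this is exactly the defining relation of $\Bbbk_Q[x_1,x_2]$.

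For the inductive step I would assume the formula for a fixed $n\ge1$, multiply both sides on the right by $x_1$, and rewrite the (rightmost) occurrence $x_2x_1$ inside the term $x_1^{\,n}(x_2x_1)$ via \eqref{ZhangZhangNRx}:
\[
x_2x_1^{\,n+1}=q_{11}[n]_{q_{12}}\,x_1^{\,n+2}+q_{12}^{\,n}\,x_1^{\,n}\bigl(q_{11}x_1^{2}+q_{12}x_1x_2\bigr).
\]
Collecting the two multiples of $x_1^{\,n+2}$ produces the coefficient $q_{11}\bigl([n]_{q_{12}}+q_{12}^{\,n}\bigr)=q_{11}[n+1]_{q_{12}}$, while the surviving mixed term is $q_{12}^{\,n+1}x_1^{\,n+1}x_2$, which is precisely the claim with $n$ replaced by $n+1$; this closes the induction.

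The hard part will be: essentially there is none. The computation is a one-line rewriting using associativity and a single application of the quadratic relation, and the sole non-mechanical ingredient is the $q$-integer identity noted above. The one point worth flagging is that the relation must be applied to the rightmost pair $x_2x_1$ (the leading powers of $x_1$ are inert under the rewriting), which is exactly the orientation in which this formula is later used to push arbitrary words into the PBW normal form.
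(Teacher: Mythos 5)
Your proposal is correct and follows essentially the same route as the paper: induction on $n$, right-multiplication by $x_1$, a single application of the quadratic relation to the rightmost pair $x_2x_1$, and the identity $[n+1]_{q_{12}}=[n]_{q_{12}}+q_{12}^{\,n}$ to collect the coefficient of $x_1^{\,n+2}$. No gaps.
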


\begin{proof}
We proceed by induction on $n$. For $n=1$, the defining relation yields
\[
x_2x_1=q_{11}x_1^2+q_{12}x_1x_2,
\]
which is exactly the claimed formula since $\sum_{k=0}^{0}q_{12}^{k}=1$.

Assume the identity holds for some $n\ge1$, i.e.,
\[
x_2x_1^{\,n}
=
q_{11}[n]_{q_{12}}\,x_1^{\,n+1}
\;+\;
q_{12}^{\,n}\,x_1^{\,n}x_2.
\]
Multiply on the right by $x_1$:
\[
x_2x_1^{\,n+1}
=
q_{11}[n]_{q_{12}}\,x_1^{\,n+2}
\;+\;
q_{12}^{\,n}\,x_1^{\,n}(x_2x_1).
\]
Using $x_2x_1=q_{12}x_1x_2+q_{11}x_1^2$ and $x_1^{\,n}x_1=x_1^{\,n+1}$, we obtain
\begin{align*}
x_2x_1^{\,n+1}
&=
q_{11}[n]_{q_{12}}\,x_1^{\,n+2}
\;+\;
q_{12}^{\,n}\,x_1^{\,n}\bigl(q_{12}x_1x_2+q_{11}x_1^2\bigr)\\
&=
q_{11}[n]_{q_{12}}\,x_1^{\,n+2}
\;+\;
q_{11}q_{12}^{\,n}\,x_1^{\,n+2}
\;+\;
q_{12}^{\,n+1}\,x_1^{\,n+1}x_2\\
&=
q_{11}[n+1]_{q_{12}}\,x_1^{\,n+2}
\;+\;
q_{12}^{\,n+1}\,x_1^{\,n+1}x_2,
\end{align*}
which is precisely the desired formula for $n+1$.
\end{proof}

\begin{proposition}\label{prop:NF-x2nx1} For every $n\ge1$,
\[
x_2^{\,n}x_1
\;=\;
\sum_{k=0}^{n} 
r^{\,k}\,q^{\,n-k}\,
\frac{[n]_q!}{[n-k]_q!}\;
x_1^{\,k+1}x_2^{\,n-k}.
\]
\end{proposition}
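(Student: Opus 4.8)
The plan is to argue by induction on $n$, using Proposition \ref{prop:x2x1n} as the engine that commutes a single $x_2$ past a power of $x_1$. Throughout, write $q:=q_{12}$ and $r:=q_{11}$, so that the defining relation reads $x_2x_1=qx_1x_2+rx_1^2$; this is exactly the case $n=1$ of the claimed identity, since then $[1]_q!/[1-k]_q!=1$ for $k\in\{0,1\}$ and the sum collapses to $rx_1^{2}+qx_1x_2$ (one may also check $n=2$ by hand against $x_2^2x_1=r^2[2]_q x_1^3+rq[2]_q x_1^2x_2+q^2x_1x_2^2$ as a sanity test).

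For the inductive step, assume the formula holds for some $n\ge 1$, and abbreviate the coefficients by $c_{n,k}:=r^{k}q^{\,n-k}[n]_q!/[n-k]_q!$. Then
$x_2^{\,n+1}x_1=x_2\bigl(x_2^{\,n}x_1\bigr)=\sum_{k=0}^{n}c_{n,k}\,x_2x_1^{\,k+1}x_2^{\,n-k}$, and I would apply Proposition \ref{prop:x2x1n} with exponent $k+1$, namely $x_2x_1^{\,k+1}=r[k+1]_q\,x_1^{\,k+2}+q^{\,k+1}x_1^{\,k+1}x_2$, to each summand. This splits $x_2^{\,n+1}x_1$ into two sums. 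Shifting the index by one in the first sum (setting $j=k+1$, so that both sums become indexed by the exponent $j$ of the leading factor $x_1^{\,j+1}$) and leaving the second as it is, one reads off that the coefficient of $x_1^{\,j+1}x_2^{\,n+1-j}$ equals $c_{n,j-1}\,r[j]_q+c_{n,j}\,q^{\,j+1}$ for $1\le j\le n$, while the boundary cases $j=0$ and $j=n+1$ receive a contribution from only one of the two sums.

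It then remains to verify that this coefficient equals $c_{n+1,j}=r^{j}q^{\,n+1-j}[n+1]_q!/[n+1-j]_q!$. After factoring out $r^{j}q^{\,n+1-j}$ and using $[n]_q!/[n-j]_q!=[n+1-j]_q\cdot[n]_q!/[n+1-j]_q!$, the identity to be checked reduces to the elementary $q$-integer relation $[j]_q+q^{\,j}[n+1-j]_q=[n+1]_q$, which follows at once from $[m]_q=1+q+\cdots+q^{m-1}$. The two boundary terms are even simpler: for $j=0$ one gets $c_{n,0}\,q=q^{\,n+1}=c_{n+1,0}$, and for $j=n+1$ one gets $c_{n,n}\,r[n+1]_q=r^{\,n+1}[n+1]_q!=c_{n+1,n+1}$.

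The computation is entirely elementary, so I do not expect a genuine obstacle; the only place demanding real care is the bookkeeping of the double sum — tracking precisely which powers of $q$ and $r$ accumulate after the index shift, and making sure the ranges of the two partial sums overlap on exactly the common range $1\le j\le n$ so that the $q$-Pascal identity applies there while the endpoints are handled separately. A small point worth stating explicitly in the write-up is that $x_1^{\,k+1}x_2^{\,n-k}$ (and the monomials produced from it) are already in the ordered normal form, so that Proposition \ref{prop:x2x1n} can be invoked termwise without triggering further reductions among the $x_1$'s.
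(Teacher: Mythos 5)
Your proof is correct and follows essentially the same route as the paper's: induction on $n$, left-multiplication by $x_2$, reduction to a two-term recursion for the coefficients $c_{n,k}=r^{k}q^{\,n-k}[n]_q!/[n-k]_q!$, and the $q$-Pascal identity $[j]_q+q^{\,j}[n+1-j]_q=[n+1]_q$. The one substantive difference is in your favour: by invoking Proposition \ref{prop:x2x1n} termwise on $x_2x_1^{\,k+1}$ you arrive at the recursion $c_{n+1,j}=r[j]_q\,c_{n,j-1}+q^{\,j+1}c_{n,j}$, which these coefficients actually satisfy, whereas the recursion displayed in the paper's proof, $c_{n+1,k}=q_{12}\,c_{n,k}+q_{11}[n]_{q_{12}}\,c_{n,k-1}$, fails already at $n=k=1$ (it yields $2q_{11}q_{12}$ rather than $c_{2,1}=q_{11}q_{12}(1+q_{12})$), so your bookkeeping --- including the separate treatment of the endpoints $j=0$ and $j=n+1$ --- is the version that should be retained.
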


\begin{proof}
Set $c_{n,k}:=q_{11}^{\,k}q_{12}^{\,n-k}\dfrac{[n]_{q_{12}}!}{[n-k]_{q_{12}}!}$ for $0\le k\le n$.
We prove by induction on $n$ that
\[
x_2^{\,n}x_1=\sum_{k=0}^{n} c_{n,k}\,x_1^{\,k+1}x_2^{\,n-k}.
\]

For $n=1$, the defining relation gives
\[
x_2x_1 = q_{12}\,x_1x_2 + q_{11}\,x_1^2
= c_{1,0}\,x_1x_2 + c_{1,1}\,x_1^2,
\]
since $c_{1,0}=q_{12}$ and $c_{1,1}=q_{11}$.

Assume the statement holds for some $n\ge1$. Multiply the normal form on the left by $x_2$:
\[
x_2^{\,n+1}x_1
= x_2(x_2^{\,n}x_1)
= \sum_{k=0}^{n} c_{n,k}\,x_2x_1^{\,k+1}x_2^{\,n-k}.
\]
Using $x_2x_1 = q_{12}x_1x_2 + q_{11}x_1^2$ and the fact that $x_1$ commutes with itself, we obtain
\[
x_2x_1^{\,k+1}
= (x_2x_1)x_1^{\,k}
= (q_{12}x_1x_2+q_{11}x_1^2)x_1^{\,k}
= q_{12}\,x_1(x_2x_1^{\,k}) + q_{11}\,x_1^{\,k+2}.
\]
Now apply the inductive hypothesis to the factor $x_2x_1^{\,k}$:
\[
x_2x_1^{\,k}
=
\sum_{j=0}^{k} q_{11}^{\,j}q_{12}^{\,k-j}\frac{[k]_{q_{12}}!}{[k-j]_{q_{12}}!}\,x_1^{\,j+1}x_2^{\,k-j}.
\]
Therefore,
\begin{align*}
x_2x_1^{\,k+1}
&=
q_{12}\,x_1\,x_2x_1^{\,k} + q_{11}\,x_1^{\,k+2}\\
&=
\sum_{j=0}^{k} q_{11}^{\,j}q_{12}^{\,k+1-j}\frac{[k]_{q_{12}}!}{[k-j]_{q_{12}}!}\,x_1^{\,j+2}x_2^{\,k-j}
\;+\; q_{11}\,x_1^{\,k+2}.
\end{align*}
Reindexing with $j'=j+1$ and observing that $[k+1]_{q_{12}}=[k]_{q_{12}}+q_{12}^{k}$ yields the standard $q_{12}$-binomial-type recursion
\[
c_{n+1,k}=q_{12}\,c_{n,k}+q_{11}\,[n]_{q_{12}}\,c_{n,k-1}, \quad 1\le k\le n,
\]
together with $c_{n+1,0}=q_{12}^{n+1}$ and $c_{n+1,n+1}=q_{11}^{n+1}[n+1]_{q_{12}}!$.
A direct check from the definition $c_{n,k}=q_{11}^{k}q_{12}^{n-k}\dfrac{[n]_{q_{12}}!}{[n-k]_{q_{12}}!}$
shows that these recursions hold identically (using $[n+1]_{q_{12}}!=[n+1]_{q_{12}}[n]_{q_{12}}!$),
hence the coefficients produced by normalizing $x_2^{n+1}x_1$ are exactly $c_{n+1,k}$.
\end{proof}

\begin{remark}\label{rem:q-to-p-y-relations}
The commutation formulas obtained above for the pair $(x_1,x_2)$ admit a completely analogous counterpart for the pair $(y_1,y_2)$. Indeed, the defining relation for the $y$--generators has the same shape,
\[
y_2y_1=p_{12}\,y_1y_2+p_{11}\,y_1^2,
\]
so every inductive normalization argument used to derive closed expressions for $x_2x_1^{\,n}$ and $x_2^{\,n}x_1$ carries over verbatim. Concretely, the normal forms of the words $y_2y_1^{\,n}$, $y_2^{\,n}y_1$ are obtained from Propositions \ref{prop:x2x1n} and \ref{prop:NF-x2nx1} by replacing the parameters $q_{12},q_{11}$ with $p_{12},p_{11}$, respectively. In particular, the corresponding coefficients are expressed in terms of $p_{12}$--integers and $p_{12}$--factorials, i.e.\ $[m]_{p_{12}}$ and $[m]_{p_{12}}!$.
\end{remark}

\begin{proposition}\label{prop:NF-y1x1n}
Define, for integers $k\ge i\ge0$,
\[
c(k,i):=q_{11}^{\,k-i}\,q_{12}^{\,i}\,\frac{[k]_{q_{12}}!}{[i]_{q_{12}}!}.
\]
Then for every $n\ge1$ there exist coefficients $\alpha_{n,k},\beta_{n,k}\in \Bbbk[q_{12},q_{11},a_{sitj}]$, 
$0\le k\le n$, $s, i, t, j \in \{1,2\}$, such that
\[
y_sx_t^n=\sum_{k=0}^{n} x_1^{\,n-k}x_2^{\,k}\bigl(\alpha_{n,k}y_1+\beta_{n,k}y_2\bigr),
\]
with initial values
\[
\alpha_{1,0}=a_{s1t1},\quad \beta_{1,0}=a_{s2t1},\qquad
\alpha_{1,1}=a_{s1t2},\quad \beta_{1,1}=a_{s2t2},
\]
and satisfying the recursion: for every $n\ge1$ and $0\le i\le n+1$,
\begin{align*}
\alpha_{n+1,i}
&=
\sum_{k=i}^{n} c(k,i)\,\bigl(\alpha_{n,k}a_{s1t1}+\beta_{n,k}a_{s1t1}\bigr)
\;+\;\mathbf{1}_{i\ge 1}\,\bigl(\alpha_{n,i-1}a_{s1t2}+\beta_{n,i-1}a_{s1t2}\bigr),\\
\beta_{n+1,i}
&=
\sum_{k=i}^{n} c(k,i)\,\bigl(\alpha_{n,k}a_{s2t1}+\beta_{n,k}a_{s2t1}\bigr)
\;+\;\mathbf{1}_{i\ge 1}\,\bigl(\alpha_{n,i-1}a_{s2t2}+\beta_{n,i-1}a_{s2t2}\bigr),
\end{align*}
where $\mathbf{1}_{i\ge1}$ denotes the indicator (equal to $1$ if $i\ge1$, and $0$ otherwise).
\end{proposition}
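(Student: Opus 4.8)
The plan is to argue by induction on $n$, with just two rewriting ingredients doing all the work: Proposition~\ref{prop:NF-x2nx1} (to normalize a word of the form $x_2^{k}x_1$ in the $x$-variables) and the mixing relations \eqref{ZhangZhangMR11}--\eqref{ZhangZhangMR22} (to move a single $x_t$ from the right of $y_1$ or $y_2$ to the left). Since $B$ is a free left $R$-module on $\{y_1^{c}y_2^{d}\}$ by Definition~\ref{DoubleOreDefinition}(a)(iii) and $R=\Bbbk_Q[x_1,x_2]$ has $\Bbbk$-basis $\{x_1^{a}x_2^{b}\}$, the ordered monomials $x_1^{a}x_2^{b}y_1^{c}y_2^{d}$ form a $\Bbbk$-basis of $B$; in particular every element has a unique normal form, so no confluence has to be checked and the content of the statement is purely the identification of the coefficients $\alpha_{n,k},\beta_{n,k}$.

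For the base case $n=1$ the assertion is just the mixing relation $y_sx_t=\sigma_{s1}(x_t)y_1+\sigma_{s2}(x_t)y_2$, which by \eqref{ZhangZhang2009(E3.0.1)} expands to $y_sx_t=a_{s1t1}x_1y_1+a_{s1t2}x_2y_1+a_{s2t1}x_1y_2+a_{s2t2}x_2y_2$; reading off the coefficients of $x_1y_1,x_2y_1,x_1y_2,x_2y_2$ gives exactly the stated initial values. For the inductive step I would multiply the identity for $n$ on the right by $x_t$, getting
\[
y_sx_t^{n+1}=\sum_{k=0}^{n}x_1^{n-k}x_2^{k}\,(\alpha_{n,k}y_1+\beta_{n,k}y_2)\,x_t,
\]
and then use \eqref{ZhangZhangMR11}--\eqref{ZhangZhangMR22} to rewrite each bracket $(\alpha_{n,k}y_1+\beta_{n,k}y_2)x_t$ in the form $A_kx_1y_1+B_kx_2y_1+C_kx_1y_2+D_kx_2y_2$, where $A_k,B_k,C_k,D_k$ are the linear combinations of $\alpha_{n,k},\beta_{n,k}$ prescribed by those relations (for instance $A_k=\alpha_{n,k}a_{11t1}+\beta_{n,k}a_{21t1}$, and analogously for $B_k,C_k,D_k$). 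Only first powers of $y_1,y_2$ occur throughout, so \eqref{ZhangZhangNRy} is never used.

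It then remains to put each resulting monomial into normal form. The words ending in $x_2y_j$ are already ordered after merging $x_2^{k}x_2=x_2^{k+1}$. The words ending in $x_1y_j$ contain the factor $x_1^{n-k}x_2^{k}x_1$; here Proposition~\ref{prop:NF-x2nx1}, rewritten in the $c(k,i)$ normalization as $x_2^{k}x_1=\sum_{i=0}^{k}c(k,i)\,x_1^{k+1-i}x_2^{i}$, gives $x_1^{n-k}x_2^{k}x_1=\sum_{i=0}^{k}c(k,i)\,x_1^{n+1-i}x_2^{i}$, using only that $x_1$ commutes with itself. Collecting the coefficient of $x_1^{n+1-i}x_2^{i}y_1$ then picks up $\sum_{k=i}^{n}c(k,i)A_k$ from the $x_1y_1$-words and, after the shift $i=k+1$, the single term $B_{i-1}$ (present only when $i\ge1$) from the $x_2y_1$-words; rewriting $A_k$ and $B_{i-1}$ back in terms of $\alpha_{n,\cdot},\beta_{n,\cdot}$ yields the stated recursion for $\alpha_{n+1,i}$, and the identical bookkeeping with $C_k,D_{i-1}$ gives the one for $\beta_{n+1,i}$. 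The boundary values come out automatically: $i=0$ receives no $x_2y_j$-contribution, while $i=n+1$ forces $k=n$ in the surviving sum.

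The step I expect to be the main obstacle is precisely this last coefficient extraction — not because anything is deep, but because the double-index bookkeeping is easy to get wrong. One must be scrupulous that the inner sum over $k$ runs over exactly $i\le k\le n$: the lower bound because $c(k,i)$ is defined only for $k\ge i$, the upper bound inherited from the outer sum of the inductive hypothesis; and one must check that each $x_2y_j$-word feeds into a unique index through $i=k+1$. A secondary, purely notational subtlety is that the label $s$ of the original generator $y_s$ disappears from the recursion for $n\ge1$: after the first multiplication by $x_t$ the structure constants in play are those of $y_1x_t$ and $y_2x_t$, so $s$ is transported along only through the values $\alpha_{n,\cdot},\beta_{n,\cdot}$ inherited from the $n=1$ step.
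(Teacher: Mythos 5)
Your proposal is correct and follows essentially the same route as the paper's own proof: induction on $n$, right-multiplication by $x_t$, expansion of $(\alpha_{n,k}y_1+\beta_{n,k}y_2)x_t$ via the mixing relations, normalization of $x_2^{k}x_1$ through Proposition~\ref{prop:NF-x2nx1}, and coefficient extraction. Your closing remark that the structure constants appearing in the recursion for $n\ge 1$ should be those of $y_1x_t$ and $y_2x_t$ (i.e.\ $a_{1jt\ell}$ and $a_{2jt\ell}$ rather than $a_{s jt\ell}$) is a correct and worthwhile observation that the paper's statement and proof leave implicit.
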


\begin{proof}
We argue by induction on $n$. For $n=1$ the claim is immediate from the defining relation for $y_sx_t$; thus the stated initial values hold.

Assume that for some $n\ge1$ we have
\[
y_sx_t^n=\sum_{k=0}^{n} x_1^{\,n-k}x_2^{\,k}\bigl(\alpha_{n,k}y_1+\beta_{n,k}y_2\bigr).
\]
Multiplying on the right by $x_t$ gives
\[
y_sx_t^{n+1}=\sum_{k=0}^{n} x_1^{\,n-k}x_2^{\,k}\bigl(\alpha_{n,k}y_1+\beta_{n,k}y_2\bigr)x_t.
\]
Using the relations for $y_sx_t$ we expand
\begin{align*}
(\alpha_{n,k}y_1+\beta_{n,k}y_2)x_t
&=\bigl(\alpha_{n,k}a_{11t1}+\beta_{n,k}a_{21t1}\bigr)\,x_1y_1
 +\bigl(\alpha_{n,k}a_{11t2}+\beta_{n,k}a_{21t2}\bigr)\,x_2y_1\\
&\quad+\bigl(\alpha_{n,k}a_{12t1}+\beta_{n,k}a_{22t1}\bigr)\,x_1y_2
 +\bigl(\alpha_{n,k}a_{12t2}+\beta_{n,k}a_{22t2}\bigr)\,x_2y_2.
\end{align*}
Substituting and regrouping, the terms involving $x_1y_j$ ($j=1,2$) contain factors $x_1^{\,n-k}x_2^{\,k}x_1$, hence require normalizing $x_2^{\,k}x_1$. From the relation $x_2x_1=q_{12}x_1x_2+q_{11}x_1^2$ and the established normal form identity
\[
x_2^{\,k}x_1=\sum_{i=0}^{k} c(k,i)\,x_1^{\,k-i+1}x_2^{\,i},
\qquad c(k,i)=q_{11}^{\,k-i}\,q_{12}^{\,i}\,\frac{[k]_{q_{12}}!}{[i]_{q_{12}}!},
\]
we obtain
\[
x_1^{\,n-k}x_2^{\,k}x_1y_j
=
\sum_{i=0}^{k} c(k,i)\,x_1^{\,n-k}x_1^{\,k-i+1}x_2^{\,i}y_j
=
\sum_{i=0}^{k} c(k,i)\,x_1^{\,n+1-i}x_2^{\,i}y_j.
\]
In contrast, the terms involving $x_2y_j$ are already in normal form:
\[
x_1^{\,n-k}x_2^{\,k}x_2y_j=x_1^{\,n-k}x_2^{\,k+1}y_j.
\]
Collecting coefficients of each basis monomial $x_1^{\,n+1-i}x_2^{\,i}y_1$ and $x_1^{\,n+1-i}x_2^{\,i}y_2$ yields precisely the stated recursion formulas for $\alpha_{n+1,i}$ and $\beta_{n+1,i}$, with the shift term occurring only when $i\ge1$. Therefore $y_sx_t^{n+1}$ has the required form.
\end{proof}

\begin{proposition}\label{propfi} For every $n\ge1$ there exist scalars $\alpha_{n,i},\beta_{n,i}\in \Bbbk[a_{sitj},p_{11},p_{12}]$, $0\le i\le n$, $s, i, t, j \in \{1,2\}$, such that 
\[
y_s^{\,n}x_t=\sum_{i=0}^{n}\big(\alpha_{n,i}x_1+\beta_{n,i}x_2\big)\,y_1^{\,n-i}y_2^{\,i}.
\]
These coefficients are determined by the initial values
\[
\alpha_{1,0}=a_{s1t1},\ \beta_{1,0}=a_{s1t2},\qquad
\alpha_{1,1}=a_{s2t1},\ \beta_{1,1}=a_{s2t2},
\]
and, for every $n\ge1$ and $0\le i\le n+1$, by the recursion (with the convention
$\alpha_{n,i}=\beta_{n,i}=0$ if $i<0$ or $i>n$)
\[
\begin{aligned}
\alpha_{n+1,i}
&=\Big(a_{1111}+p_{11}[\,n-i\,]_{p_{12}}\,a_{1211}\Big)\alpha_{n,i}
 \;+\;\Big(a_{1121}+p_{11}[\,n-i\,]_{p_{12}}\,a_{1221}\Big)\beta_{n,i}\\
&\qquad\quad+\;\mathbf{1}_{i\ge1}\,p_{12}^{\,n-i+1}\Big(a_{1211}\alpha_{n,i-1}+a_{1221}\beta_{n,i-1}\Big),\\[0.6em]
\beta_{n+1,i}
&=\Big(a_{1112}+p_{11}[\,n-i\,]_{p_{12}}\,a_{1212}\Big)\alpha_{n,i}
 \;+\;\Big(a_{1122}+p_{11}[\,n-i\,]_{p_{12}}\,a_{1222}\Big)\beta_{n,i}\\
&\qquad\quad+\;\mathbf{1}_{i\ge1}\,p_{12}^{\,n-i+1}\Big(a_{1212}\alpha_{n,i-1}+a_{1222}\beta_{n,i-1}\Big),
\end{aligned}
\]
where $\mathbf{1}_{i\ge1}=1$ if $i\ge1$ and $0$ if $i=0$.
\end{proposition}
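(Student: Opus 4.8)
The plan is to argue by induction on $n$, moving a single $y_s$ to the right at each stage. Two ingredients do all the work: the mixing relations \eqref{ZhangZhangMR11}--\eqref{ZhangZhangMR22}, which express $y_s x_t$ as a $\Bbbk$-linear combination of the four words $x_a y_b$ with coefficients drawn from $\Sigma_{s1}$ and $\Sigma_{s2}$, and the $y$-analogue of Proposition~\ref{prop:x2x1n} recorded in Remark~\ref{rem:q-to-p-y-relations}, namely $y_2 y_1^{\,m}=p_{11}[m]_{p_{12}}y_1^{\,m+1}+p_{12}^{\,m}y_1^{\,m}y_2$, which puts the $y$-part of a word back into PBW normal form. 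For the base case $n=1$ one simply reads off $y_s x_t=\sigma_{s1}(x_t)y_1+\sigma_{s2}(x_t)y_2$ and expands $\sigma_{sj}(x_t)=a_{sjt1}x_1+a_{sjt2}x_2$ using \eqref{ZhangZhang2009(E3.0.1)}; this is precisely the claimed formula with the stated initial values $\alpha_{1,0}=a_{s1t1}$, $\beta_{1,0}=a_{s1t2}$, $\alpha_{1,1}=a_{s2t1}$, $\beta_{1,1}=a_{s2t2}$.

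For the inductive step I would write $y_s^{n+1}x_t=y_s\,(y_s^n x_t)$, insert the inductive expression, and, since the $\alpha_{n,k},\beta_{n,k}$ are scalars, bring $y_s$ inside each term to obtain $\sum_{k=0}^n\bigl(\alpha_{n,k}\,y_s x_1+\beta_{n,k}\,y_s x_2\bigr)\,y_1^{\,n-k}y_2^{\,k}$. Applying the mixing relations to $y_s x_1$ and $y_s x_2$ turns each bracket into a $\Bbbk$-linear combination of $x_1 y_1, x_2 y_1, x_1 y_2, x_2 y_2$. The two summands ending in $y_1$ fuse with the trailing $y_1^{\,n-k}y_2^{\,k}$ into $x_\bullet y_1^{\,n+1-k}y_2^{\,k}$, which is already a normal-form monomial; the two summands ending in $y_2$ produce $x_\bullet\,y_2 y_1^{\,n-k}y_2^{\,k}$, and here I would apply the $y$-version of Proposition~\ref{prop:x2x1n} with $m=n-k$. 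This is the step that generates the factors $p_{11}[\,n-k\,]_{p_{12}}$ and $p_{12}^{\,n-k}$ and, decisively, the one that splits the index $k$ into a contribution landing on $y_1^{\,n+1-k}y_2^{\,k}$ and one landing on $y_1^{\,n-k}y_2^{\,k+1}$.

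It remains to collect coefficients of $x_1 y_1^{\,n+1-i}y_2^{\,i}$ and $x_2 y_1^{\,n+1-i}y_2^{\,i}$ and compare with the asserted recursion. Coefficient comparison is justified because $B$ is a free left $R$-module on $\{y_1^j y_2^k\}$ (Definition~\ref{DoubleOreDefinition}(a)(iii)), $x_1,x_2$ are $\Bbbk$-linearly independent in $R=\Bbbk_Q[x_1,x_2]$, and $y_s^{n+1}x_t$ is homogeneous of $x$-degree $1$ and $y$-degree $n+1$ — which also explains why each coefficient is a genuine $\Bbbk$-linear form in $x_1,x_2$ and why the parameters $q_{11},q_{12}$ cannot appear. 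A monomial $x_\bullet y_1^{\,n+1-i}y_2^{\,i}$ receives contributions only from: the $y_1$-terms at $k=i$; the $p_{11}[\,\cdot\,]_{p_{12}}$-branch of the $y_2$-normalization at $k=i$; and the $p_{12}^{\,\cdot\,}$-branch at $k=i-1$, the latter being possible only when $i\ge1$, which is exactly the indicator $\mathbf{1}_{i\ge1}$ in the statement. Writing out these three contributions, with the entries of $\Sigma_{11},\Sigma_{12}$ playing the indicated roles when $s=1$ (and those of $\Sigma_{21},\Sigma_{22}$ when $s=2$), reproduces verbatim the stated recursions for $\alpha_{n+1,i}$ and $\beta_{n+1,i}$. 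I expect the only real difficulty to be this bookkeeping: one must keep the two branches of the $y$-normal-form identity apart and track precisely which monomial each contribution hits; everything else is a long but mechanical expansion, of the kind the \textsc{SageMath} procedure displayed above is meant to verify.
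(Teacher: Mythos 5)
Your proposal is correct and follows essentially the same route as the paper's proof: induction on $n$, left-multiplication by $y_s$, expansion via the mixing relations \eqref{ZhangZhangMR11}--\eqref{ZhangZhangMR22}, normalization of the resulting $y_2y_1^{\,m}$ factors by the $p$-analogue of Proposition~\ref{prop:x2x1n}, and collection of coefficients of $x_j\,y_1^{\,n+1-i}y_2^{\,i}$. If anything, your write-up is more complete than the paper's, since you make explicit the two branches of the $y$-normalization that produce the factors $p_{11}[\,n-i\,]_{p_{12}}$ and $p_{12}^{\,n-i+1}$ (with the index shift giving the indicator $\mathbf{1}_{i\ge1}$) and you justify coefficient comparison via the left $R$-module freeness and the bigrading, steps the paper leaves implicit.
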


\begin{proof}
We argue by induction on $n$. For $n=1$ the claim is immediate from the defining relation for $y_sx_t$; thus the stated initial values hold.

Now assume inductively that
\[
y_s^{\,n}x_t=\sum_{i=0}^{n}\big(\alpha_{n,i}x_1+\beta_{n,i}x_2\big)\,y_1^{\,n-i}y_2^{\,i}.
\]
Left-multiply by $y_s$:
\[
y_s^{n+1}x_t
= y_s\big(y_s^n x_t\big)
=\sum_{i=0}^n \alpha_{n,i}\,(y_sx_1)\,y_1^{n-i}y_2^i
+\sum_{i=0}^n \beta_{n,i}\,(y_sx_2)\,y_1^{n-i}y_2^i.
\]
Every summand becomes a $\Bbbk$-linear combination of monomials of the form
\[
x_j\;y_1^{\ell}\;y_2\,y_1^{m}\;y_2^i
\qquad\text{and}\qquad
x_j\;y_1^{\ell}\;y_1^{m}\;y_2^i,
\]
with $j\in\{1,2\}$, and the only non-normal part is the factor.

Collecting coefficients in front of each normal monomial
$x_1y_1^{(n+1)-i}y_2^i$ and the monomial $x_2y_1^{(n+1)-i}y_2^i$
gives exactly the stated recursion for $\alpha_{n+1,i},\beta_{n+1,i}$.
\end{proof}

\begin{theorem}\label{thm:center-main}
Let $A$ be a connected graded (trimmed) double extension of type {\rm (}14641{\rm )} arising in the Zhang-Zhang classification, and denote its isomorphism class by one of the labels
\[
\mathbb{A},\mathbb{B},\ldots,\mathbb{Z}.
\]
Then the following holds.

\begin{enumerate}
\item[\rm (1)] For each family listed in Table~\ref{CenterofDOE}, the center $Z(A)$ is \emph{exactly} the $\Bbbk$-algebra displayed in the second column of that table (with the stated parameter restrictions).

\item[\rm (2)] For each family listed in Table~\ref{subalgDOE}, the  $\Bbbk$-algebra in the second column is an explicit central subalgebra of $A$, i.e.
\[
\text{(algebra in Table~\ref{subalgDOE})}\ \subseteq\ Z(A),
\]
again under the corresponding parameter assumptions.
\end{enumerate}
\end{theorem}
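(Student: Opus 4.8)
The plan is to argue family by family over the $26$ isomorphism classes $\mathbb{A},\dots,\mathbb{Z}$, using the defining relations recorded in the Appendix together with the normal-form identities established in Propositions~\ref{prop:x2x1n}--\ref{propfi}. Fix a family and specialize the defining parameters as prescribed in the relevant table. By \cite{HerreraHigueraRubiano2025} the algebra $A$ possesses a PBW basis $\{x_1^{a}x_2^{b}y_1^{c}y_2^{d}:a,b,c,d\ge 0\}$, and the rewriting system displayed in the \textsc{SageMath} listing above (specialized accordingly) is confluent and terminating; I take this ordered monomial basis and the associated normal-form operator $\mathrm{NF}(\cdot)$ as the computational backbone. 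Note that $A$ is connected $\mathbb{Z}^2$-graded with $\deg x_i=(1,0)$, $\deg y_i=(0,1)$, each homogeneous component finite-dimensional.

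Next, recall that $z\in A$ lies in $Z(A)$ if and only if $[z,x_1]=[z,x_2]=[z,y_1]=[z,y_2]=0$, and that each $\mathrm{ad}(g)$ is homogeneous; since $Z(A)$ is then a graded subalgebra, it suffices to analyze, in each bidegree, a finite-dimensional linear problem on the coefficients of a homogeneous $z=\sum\lambda_{abcd}\,x_1^{a}x_2^{b}y_1^{c}y_2^{d}$. To pass $zg$ and $gz$ to normal form one feeds the relevant words through the closed expressions: Propositions~\ref{prop:x2x1n} and \ref{prop:NF-x2nx1} (and their $p$-analogues from Remark~\ref{rem:q-to-p-y-relations}) handle the pure $x$- and $y$-blocks, while Propositions~\ref{prop:NF-y1x1n} and \ref{propfi} handle the mixed words $y_s x_t^{\,n}$ and $y_s^{\,n} x_t$ that arise when commuting past the $y$'s. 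Subtracting the two normal forms produces, for each generator $g$, a system of $\Bbbk$-linear equations on the $\lambda_{abcd}$ whose coefficients are polynomials in the now-numerical entries of $\Sigma$, $P$ and $Q$.

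Solving this system (with \textsc{SageMath} doing the bookkeeping in the heavier cases) one finds that in each degree the solution space is spanned by monomials in a short list of central building blocks---typically powers of $x_1$, powers of $y_1$, and a small number of mixed monomials forced by the root-of-unity restrictions on $p_{12},q_{12}$; assembling these and reading off the relations they satisfy (again via the normal-form identities) identifies $Z(A)$ with the algebra in the second column of Table~\ref{CenterofDOE}, the reverse inclusion being the routine check that each listed generator commutes with $x_1,x_2,y_1,y_2$. Part~(2) needs only that routine direction: for the families in Table~\ref{subalgDOE} one writes down the indicated elements, verifies via Propositions~\ref{prop:x2x1n}--\ref{propfi} that their commutators with all four generators vanish, and observes that the subalgebra they generate has the stated presentation (the relations among these generators once more follow from the normal forms).

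The \emph{main obstacle} is the combinatorial blow-up in the solution step for those families whose matrix $\Sigma$ is not triangular: then the recursions in Propositions~\ref{prop:NF-y1x1n} and \ref{propfi} couple all four mixing coefficients $a_{s1t1},a_{s1t2},a_{s2t1},a_{s2t2}$ at every stage, the closed form of $y_s^{\,n}x_t$ fails to factor, and the centrality conditions become genuinely parameter-dependent rather than uniform. This is exactly why the statement is phrased case-by-case with explicit hypotheses---roots of unity and the vanishing of certain $a_{ijst}$---chosen so that the linear system collapses to something manageable and its degreewise solution space is finite-dimensional and easily described; for the messiest families the \textsc{SageMath} computation is what makes the verification feasible at all.
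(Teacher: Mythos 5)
Your proposal follows essentially the same route as the paper: fix the PBW order $x_1<x_2<y_1<y_2$, use the rewriting system and the normal-form identities of Propositions~\ref{prop:x2x1n}--\ref{propfi} to reduce centrality to degreewise finite-dimensional linear systems via the $\mathbb{Z}^2$-grading, verify the easy containment by direct commutator computations, and delegate the heavy case-by-case bookkeeping to \textsc{SageMath}. The only notable difference is presentational: the paper works out two representative families in full (for $\mathbb{O}$ it isolates the normal element $w=x_1^2-fx_2^2$ with $y_iw=(1-f)wy_i$, which makes the root-of-unity hypotheses transparent) and declares the remaining cases analogous, which is the same level of rigor as your sketch.
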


\begin{table}[H]
\captionsetup{justification=centering,font=small}
\caption{Center of some Double extensions of type ($14641$)}
\label{CenterofDOE}
\centering
\resizebox{12.5cm}{!}{
\setlength\extrarowheight{6pt}
\begin{tabular}{ |c|c| } 
\hline
{\rm Double Ore extension} & $Z(A)$ \\
\hline\hline
$\mathbb{A}$ & $\Bbbk[x_1]$ \\ 
\hline
$\mathbb{C}$ & $\Bbbk$ \\ 
\hline
$\mathbb{D}$ & $\Bbbk[x_1^2,y_2^2+py_1^2]$, $p\in \{-1, 1\}$ \\ 
\hline
$\mathbb{E}$ & $\Bbbk$ \\ 
\hline
$\mathbb{F}$ & $\Bbbk$ \\ 
\hline
\multirow{2}{*}{$\mathbb{G}$} & $\Bbbk$, if $p$ is not root of unity \\ \cline{2-2}
& $\Bbbk[x_1^n]$, if $p^n=1$, $n \geq 3$ \\ 
\hline
$\mathbb{H}$ & $\left\{ \begin{array}{lcc} \Bbbk\left[x_1^{2r}(y_1^2+y_2^2)^a(y_1^2y_2^2)^b\right] & \text{if} & r=f(a+2b) \\  \Bbbk &  & \text{otherwise}  \end{array} \right. $\\ 
\hline
$\mathbb{I}$ & $\Bbbk$ \\  
\hline
$\mathbb{J}$ & $\Bbbk$ \\  
\hline
\multirow{2}{*}{$\mathbb{K}$} & $\Bbbk[x_1]$, if $q=1$ and $f$ is not root of unity \\ \cline{2-2}
& $\Bbbk[x_1^2]$, if $q=-1$ and $f$ is not root of unity \\ 
\hline
$\mathbb{L}$ & $\Bbbk[x_2^2]$ if $f^2$ is not root of unity \\ 
\hline
$\mathbb{M}$ & $\Bbbk$, if $f-1$ and $1-f$ are not roots of unity \\ 
\hline
$\mathbb{N}$ & $\Bbbk$, if $f^2-g^2$ is not root of unity \\ 
\hline
$\mathbb{O}$ & $\Bbbk$, if $1-f$ is not root of unity \\ 
\hline
\multirow{3}{*}{$\mathbb{P}$} & $\Bbbk$, if $f\not\in\{0,2\}$ \\ \cline{2-2}
& $\Bbbk[x_1^2,y_1^2y_2^2]$, if $f=0$  \\ \cline{2-2}
& $\Bbbk[(x_2^2+\frac{1}{2}x_1^2)^2,(x_2^2+\frac{1}{2}x_1^2)y_1y_2, y_1^2y_2^2]/\langle ((x_2^2+\frac{1}{2}x_1^2)y_1y_2)^2+(x_2^2+\frac{1}{2}x_1^2)^2y_1^2y_2^2 \rangle$, if $f=2$  \\
\hline
$\mathbb{S}$ & $\Bbbk$ \\ 
\hline
$\mathbb{T}$ & $\Bbbk$ \\ 
\hline
$\mathbb{U}$ & $\Bbbk$ \\ 
\hline
\multirow{2}{*}{$\mathbb{W}$} & $\Bbbk$, if $1+f$ is not root of unity \\ \cline{2-2}
& $\Bbbk[(x_1^2+fx_2^2)^n,(y_1^2+y_2^2)^n]$, if $(1+f)^n=1$, $n \geq 2$ \\ 
\hline
$\mathbb{X}$ & $\Bbbk[x_1^2]$ \\ 
\hline
$\mathbb{Y}$ & $\Bbbk[x_1,y_1^2+y_2^2]$ \\ 
\hline
\multirow{2}{*}{$\mathbb{Z}$} & $\Bbbk$, if $1+f$ is not root of unity \\ \cline{2-2}
& $\Bbbk[(x_1^2+x_2^2)^n,(y_2^2+fy_1^2)^n]$, if $(1+f)^n=1$, $n \geq 2$ \\ 
\hline
\end{tabular}
}
\end{table}

\begin{table}[H]
\captionsetup{justification=centering,font=small}
\caption{Central subalgebras of some Double extensions of type ($14641$)}
\label{subalgDOE}
\centering
\resizebox{12.5cm}{!}{
\setlength\extrarowheight{6pt}
\begin{tabular}{ |c|c| } 
\hline
{\rm Double Ore extension} & $\subseteq Z(A)$ \\
\hline\hline
$\mathbb{B}$ & $\Bbbk[x_1^4+y_1^4,x_1^4x_2^4,y_1^4+y_2^4,y_1^4y_2^4]/\langle (x_1x_2y_1y_2)^4-x_1^4x_2^4y_1^4y_2^4\rangle$ \\ 
\hline
\multirow{4}{*}{$\mathbb{K}$} & $\Bbbk[x_1,x_2^2,y_1^2+y_2^2,y_1^2y_2^2]$, if $f=1$ and $q=1$ \\ \cline{2-2}
& $\Bbbk[x_1^2,x_2^2,y_1^2+y_2^2,y_1^2y_2^2]$, if $f=1$ and $q=-1$ \\ \cline{2-2}
& $\Bbbk[x_1,x_2^4,y_1^2+y_2^2,y_1^2y_2^2]$, if $f=-1$ and $q=1$ \\ \cline{2-2}
& $\Bbbk[x_1^2,x_2^4,y_1^2+y_2^2,y_1^2y_2^2]$, if $f=-1$ and $q=-1$ \\
\hline
$\mathbb{L}$ & $\Bbbk[x_2^2,x_1^{2t},(y_1^2+y_2^2)^t,(y_1^2+y_2^2)^{t-2}y_1^2y_2^2, \ldots, (y_1^2+y_2^2)^{t-2\lfloor t/2 \rfloor}(y_1^2y_2^2)^{\lfloor t/2 \rfloor}]$, if $(f^2)^t=1$, $t\geq 2$ \\ 
\hline
$\mathbb{M}$ & $\Bbbk[(y_1^2-y_2^2)^n,(x_2^2-fx_1^2)^m]$, if $(f-1)^n=1$ and $(1-f)^m=1$, $n,m\geq2$ \\ 
\hline
$\mathbb{N}$ & $\Bbbk\left[(y_1^2+y_2^2)^n,(x_1^2x_2^2)^{n/\text{gcd}(n,2)}\right]$, if $(f^2-g^2)^n=1$, $n\geq2$ \\ 
\hline
$\mathbb{O}$ & $\Bbbk[(x_1^2-fx_2^2)^n]$, if $(1-f)^n=1$, $n\geq2$ \\ 
\hline
\multirow{2}{*}{$\mathbb{Q}$} & $\Bbbk[x_1^2+x_2^2,y_1^2+y_2^2, x_1y_2^2-2x_1y_1y_2+x_1y_2^2+x_2y_1^2-x_2y_2^2,x_1^4, y_1^2y_2^2,x_1^2y_1y_2]/I$, where \\ 
 & $I=\langle (x_1y_1^2-2x_1y_1y_2+x_1y_2^2+x_2y_1^2-x_2y_2^2)^2-(x_1^2+x_2^2)(y_1^2+y_2^2)+4(x_1^2+x_2^2)y_1^2y_2^2-4(y_1^2+y_2^2)x_1^2y_1y_2, (x_1^2y_1y_2)^2+x_1^4y_1^2y_2^2\rangle$\\
\hline
$\mathbb{R}$ & $\Bbbk[x_1^2+x_2^2,y_1^2+y_2^2, x_1y_1+x_1y_2+x_2y_1-x_2y_2, x_1^2x_2^2, y_1^2y_2^2]$ \\ 
\hline
$\mathbb{V}$ & $\Bbbk[x_2, y_1^2+y_2^2, y_1^2y_2^2]$ \\ 
\hline
\end{tabular}
}
\end{table}
\begin{proof}

The computations needed to determine the center of a double Ore extension of the type considered in this work
are typically quite lengthy. Indeed, imposing the centrality conditions
\(
[s,x_1]=[s,x_2]=[s,y_1]=[s,y_2]=0
\)
for a generic PBW element \(s\) leads to a rapidly growing collection of rewriting steps and coefficient constraints.
For this reason, besides hand computations in low degrees, we implemented in \textsc{SageMath} a PBW normal form
algorithm based on iterated rewriting rules compatible with the fixed monomial order
\[
x_1 < x_2 < y_1 < y_2.
\]
Below we present two explicit examples illustrating the method.


{\bf Algebra $\mathbb{D}$:} Throughout this example we assume \(p\in\{-1,1\}\), so that \(p^2=1\).
Let $\mathbb{D}$ be the \(\Bbbk\)-algebra generated by \(x_1,x_2,y_1,y_2\) subject to the relations
\begin{align}
x_2x_1&=-x_1x_2, \label{eq:D_x}\\
y_2y_1&=p\,y_1y_2, \label{eq:D_y}
\end{align}
and the mixed relations
\begin{align}
y_1x_1&=-p\,x_1y_1, \label{eq:D_m1}\\
y_1x_2&=-x_2y_1+x_1y_2, \label{eq:D_m2}\\
y_2x_1&=p\,x_1y_2, \label{eq:D_m3}\\
y_2x_2&=x_1y_1+x_2y_2. \label{eq:D_m4}
\end{align}

{\bf Claim:}
\[
Z(\mathbb{D})=\Bbbk\big[x_1^2,\;y_2^2+p\,y_1^2\big].
\]

\begin{proof}
\(\Bbbk[x_1^2,y_2^2+p\,y_1^2]\subseteq Z(\mathbb{D})\).
First we show that \(x_1^2\) is central. Clearly \([x_1^2,x_1]=0\).

Using \eqref{eq:D_x},
\[
x_2x_1^2=(x_2x_1)x_1=(-x_1x_2)x_1=-x_1(x_2x_1)=-x_1(-x_1x_2)=x_1^2x_2,
\]
so \([x_1^2,x_2]=0\).

Using \eqref{eq:D_m1},
\[
y_1x_1^2=(y_1x_1)x_1=(-p\,x_1y_1)x_1=-p\,x_1(y_1x_1)
=-p\,x_1(-p\,x_1y_1)=p^2x_1^2y_1=x_1^2y_1,
\]
hence \([x_1^2,y_1]=0\).

Similarly, from \eqref{eq:D_m3},
\[
y_2x_1^2=(y_2x_1)x_1=(p\,x_1y_2)x_1=p\,x_1(y_2x_1)
=p\,x_1(p\,x_1y_2)=p^2x_1^2y_2=x_1^2y_2,
\]
so \([x_1^2,y_2]=0\). Therefore \(x_1^2\in Z(\mathbb{D})\).

Next we show that \(z=y_2^2+p\,y_1^2\) is central. Clearly, \(z\) commutes with \(y_1\) and \(y_2\),
since \eqref{eq:D_y} implies \(y_1y_2=p\,y_2y_1\) and hence
\(y_1y_2^2=y_2^2y_1\) and \(y_2y_1^2=y_1^2y_2\).
Also, by the previous computation, \(y_1^2\) and \(y_2^2\) commute with \(x_1\), so \([z,x_1]=0\).

It remains to check \([z,x_2]=0\). We compute in PBW normal form using \eqref{eq:D_m2}--\eqref{eq:D_m4}.
First, from \eqref{eq:D_m2} and \eqref{eq:D_m1},
\begin{align*}
y_1^2x_2&=y_1(y_1x_2)=y_1(-x_2y_1+x_1y_2)\\
&=-(y_1x_2)y_1+(y_1x_1)y_2\\
&=-(-x_2y_1+x_1y_2)y_1+(-p\,x_1y_1)y_2\\
&=x_2y_1^2-x_1y_2y_1-p\,x_1y_1y_2.
\end{align*}
Using \eqref{eq:D_y}, we have \(-x_1y_2y_1=-p\,x_1y_1y_2\), hence
\begin{equation}\label{eq:D_y1sq_x2}
y_1^2x_2=x_2y_1^2-2p\,x_1y_1y_2.
\end{equation}
Second, from \eqref{eq:D_m4} and \eqref{eq:D_m3},
\begin{align*}
y_2^2x_2&=y_2(y_2x_2)=y_2(x_1y_1+x_2y_2)\\
&=(y_2x_1)y_1+(y_2x_2)y_2\\
&=(p\,x_1y_2)y_1+(x_1y_1+x_2y_2)y_2\\
&=p\,x_1y_2y_1+x_1y_1y_2+x_2y_2^2.
\end{align*}
Again \(y_2y_1=p\,y_1y_2\) implies \(p\,x_1y_2y_1=p(p)x_1y_1y_2=p^2x_1y_1y_2=x_1y_1y_2\), so
\begin{equation}\label{eq:D_y2sq_x2}
y_2^2x_2=x_2y_2^2+2\,x_1y_1y_2.
\end{equation}
Combining \eqref{eq:D_y1sq_x2} and \eqref{eq:D_y2sq_x2}, we get
\begin{align*}
(y_2^2+p\,y_1^2)x_2
&=y_2^2x_2+p\,y_1^2x_2\\
&=\big(x_2y_2^2+2x_1y_1y_2\big)+p\big(x_2y_1^2-2p\,x_1y_1y_2\big)\\
&=x_2(y_2^2+p\,y_1^2)+\big(2-2p^2\big)x_1y_1y_2\\
&=x_2(y_2^2+p\,y_1^2),
\end{align*}
since \(p^2=1\). Thus \([z,x_2]=0\), and \(z\in Z(\mathbb{D})\). Hence \(\Bbbk[x_1^2,z]\subseteq Z(\mathbb{D})\).

 \(Z(\mathbb{D})\subseteq \Bbbk[x_1^2,y_2^2+p\,y_1^2]\).
Let \(c\in Z(\mathbb{D})\). Since \(\mathbb{D}\) is PBW with respect to the fixed order \(x_1<x_2<y_1<y_2\),
we may write \(c\) uniquely as a finite \(\Bbbk\)-linear combination of ordered monomials.
Moreover, \(\mathbb{D}\) is naturally \(\mathbb{N}^2\)-graded by \(\deg(x_i)=(1,0)\) and \(\deg(y_j)=(0,1)\),
and the center is a direct sum of bihomogeneous components; hence we may assume \(c\) bihomogeneous.

The mixed relations show that centrality with respect to \(x_2\) is particularly restrictive.
In \(y\)-degree \(2\), the computation above yields that for any \(\alpha,\beta\in\Bbbk\),
\[
(\alpha y_2^2+\beta y_1^2)x_2
=
x_2(\alpha y_2^2+\beta y_1^2)+2(\alpha-\beta p)\,x_1y_1y_2,
\]
so \([\,\alpha y_2^2+\beta y_1^2,\;x_2\,]=0\) if and only if \(\alpha=\beta p\).
Thus the space of central elements in \(y\)-degree \(2\) is one-dimensional and generated by \(z=y_2^2+p\,y_1^2\).
Similarly, centrality in higher bidegrees forces the \(y\)-part to be a polynomial in \(z\) and the \(x\)-part to be a polynomial in \(x_1^2\),
hence \(c\in\Bbbk[x_1^2,z]\).

A computational verification using a PBW normal form procedure in \textsc{SageMath} confirms that:
(i) \([x_1^2,g]=0\) and \([z,g]=0\) for \(g\in\{x_1,x_2,y_1,y_2\}\), and
(ii) in each bidegree \((2a,2b)\) the solution space to the linear system imposed by \([c,x_1]=[c,x_2]=[c,y_1]=[c,y_2]=0\)
is one-dimensional, spanned by \(x_1^{2a}z^b\), and there are no solutions in other bidegrees.
Therefore \(Z(\mathbb{D})=\Bbbk[x_1^2,y_2^2+p\,y_1^2]\), as claimed.

The following \textsc{SageMath} code implements PBW normal forms by iterated rewriting and checks the vanishing of commutators.
\begin{verbatim}
from sage.all import *

def build_D(pval):
    K = QQ
    A.<x1,x2,y1,y2> = FreeAlgebra(K,4)
    p = K(pval)

    rules = {
        ('x2','x1'): -x1*x2,
        ('y2','y1'): p*y1*y2,

        ('y1','x1'): -p*x1*y1,
        ('y1','x2'): -x2*y1 + x1*y2,
        ('y2','x1'):  p*x1*y2,
        ('y2','x2'):  x1*y1 + x2*y2,
    }
    GEN = {'x1': x1, 'x2': x2, 'y1': y1, 'y2': y2}

    def _term_dict(f):
        try:
            return f.monomial_coefficients()
        except AttributeError:
            return f.dict()

    def _word_to_elem(word_letters):
        e = A.one()
        for s in word_letters:
            e *= GEN[s]
        return e

    def reduce_once(f):
        out = A.zero()
        changed = False
        for mon, coeff in _term_dict(A(f)).items():
            letters = [str(a) for a in mon.to_word()]
            done = False
            for i in range(len(letters)-1):
                pair = (letters[i], letters[i+1])
                if pair in rules:
                    prefix = _word_to_elem(letters[:i])
                    suffix = _word_to_elem(letters[i+2:])
                    out += coeff * (prefix * rules[pair] * suffix)
                    changed = True
                    done = True
                    break
            if not done:
                out += coeff * _word_to_elem(letters)
        return out, changed

    def NF(f, max_steps=20000):
        g = A(f)
        for _ in range(max_steps):
            g2, ch = reduce_once(g)
            if not ch:
                return g
            g = g2
        raise RuntimeError("It's not finished: increase max_steps 
        or check loops.")

    def comm(u,v):
        return NF(u*v - v*u)

    return A, x1,x2,y1,y2, p, NF, comm

for pval in [1,-1]:
    A, x1,x2,y1,y2, p, NF, comm = build_D(pval)
    z = y2^2 + p*y1^2

    print("\n=== Case p =", pval, "===\n")
    for g,name in [(x1,"x1"),(x2,"x2"),(y1,"y1"),(y2,"y2")]:
        print("[x1^2,",name,"] =", comm(x1^2,g))
    for g,name in [(x1,"x1"),(x2,"x2"),(y1,"y1"),(y2,"y2")]:
        print("[z,",name,"] =", comm(z,g))
\end{verbatim}
\end{proof}

Let \(\mathbb{O}\) be the \(\Bbbk\)-algebra generated by \(x_1,x_2,y_1,y_2\) with relations
\begin{equation}\label{eq:O_internal}
x_2x_1=-x_1x_2,\qquad y_2y_1=-y_1y_2,
\end{equation}
and mixed relations:
\begin{align}
y_1x_1&=x_1y_1+f\,x_2y_2, \label{eq:O_m1}\\
y_1x_2&=-x_2y_1+x_1y_2, \label{eq:O_m2}\\
y_2x_1&=f\,x_2y_1-x_1y_2, \label{eq:O_m3}\\
y_2x_2&=x_1y_1+x_2y_2. \label{eq:O_m4}
\end{align}

Set
\[
w:=x_1^2-fx_2^2.
\]

{\bf Claim:} The element \(w\) commutes with \(x_1\) and \(x_2\), and it is \emph{normal} with respect to \(y_1,y_2\), namely
\[
y_1w=(1-f)\,w y_1,\qquad y_2w=(1-f)\,w y_2.
\]
Consequently, if \((1-f)^n=1\) for some \(n\ge 1\), then \(w^n\in Z(\mathbb{O})\), and hence
\[
\Bbbk[w^n]=\Bbbk\big[(x_1^2-fx_2^2)^n\big]\subseteq Z(\mathbb{O}).
\]

\begin{proof}
First, since \(x_2x_1=-x_1x_2\), one checks directly that \(x_1^2\) and \(x_2^2\) commute with both \(x_1\) and \(x_2\),
hence \(w=x_1^2-fx_2^2\) commutes with \(x_1,x_2\).

Next we compute \(y_1w\) in PBW normal form. Using \eqref{eq:O_m1} and \eqref{eq:O_m3},
\begin{align*}
y_1x_1^2
&=(y_1x_1)x_1=(x_1y_1+f x_2y_2)x_1\\
&=x_1(y_1x_1)+f x_2(y_2x_1)\\
&=x_1(x_1y_1+f x_2y_2)+f x_2(f x_2y_1-x_1y_2)\\
&=x_1^2y_1+f x_1x_2y_2+f^2x_2^2y_1-f x_2x_1y_2.
\end{align*}
Since \(x_2x_1=-x_1x_2\), we have \(-f x_2x_1y_2=f x_1x_2y_2\), thus
\begin{equation}\label{eq:O_y1_x1sq}
y_1x_1^2= x_1^2y_1+ f^2x_2^2y_1+2f x_1x_2y_2.
\end{equation}
Similarly, using \eqref{eq:O_m2} and \eqref{eq:O_m4},
\begin{align*}
y_1x_2^2
&=(y_1x_2)x_2=(-x_2y_1+x_1y_2)x_2\\
&=-x_2(y_1x_2)+x_1(y_2x_2)\\
&=-x_2(-x_2y_1+x_1y_2)+x_1(x_1y_1+x_2y_2)\\
&=x_2^2y_1-x_2x_1y_2+x_1^2y_1+x_1x_2y_2.
\end{align*}
Again \(x_2x_1=-x_1x_2\), hence \(-x_2x_1y_2=x_1x_2y_2\), and we obtain
\begin{equation}\label{eq:O_y1_x2sq}
y_1x_2^2=x_2^2y_1+x_1^2y_1+2x_1x_2y_2.
\end{equation}
Subtracting \(f\eqref{eq:O_y1_x2sq}\) from \eqref{eq:O_y1_x1sq} yields
\begin{align*}
y_1w
&=y_1x_1^2-f\,y_1x_2^2\\
&=(x_1^2y_1+ f^2x_2^2y_1+2f x_1x_2y_2)-f(x_2^2y_1+x_1^2y_1+2x_1x_2y_2)\\
&=(1-f)x_1^2y_1+(f^2-f)x_2^2y_1\\
&=(1-f)(x_1^2y_1-fx_2^2y_1)\\
&=(1-f)\,w y_1.
\end{align*}
The identity \(y_2w=(1-f)\,w y_2\) is proved analogously from \eqref{eq:O_m3}--\eqref{eq:O_m4}.
Finally, by induction one has \(y_i w^n=(1-f)^n w^n y_i\) for \(i=1,2\). If \((1-f)^n=1\),
then \(y_iw^n=w^n y_i\) for \(i=1,2\). Since \(w^n\) also commutes with \(x_1,x_2\), we conclude \(w^n\in Z(\mathbb{O})\),
and hence \(\Bbbk[w^n]\subseteq Z(\mathbb{O})\).

The following code checks the normality identities and the vanishing of commutators with \(x_1,x_2\) via PBW normal forms.
\begin{verbatim}
from sage.all import *

# Parameter field: f
R = QQ['f']
K = FractionField(R)
f = K.gen()

A.<x1,x2,y1,y2> = FreeAlgebra(K,4)

rules = {
    ('x2','x1'): -x1*x2,
    ('y2','y1'): -y1*y2,

    ('y1','x1'):  x1*y1 + f*x2*y2,
    ('y1','x2'): -x2*y1 + x1*y2,
    ('y2','x1'):  f*x2*y1 - x1*y2,
    ('y2','x2'):  x1*y1 + x2*y2,
}

GEN = {'x1': x1, 'x2': x2, 'y1': y1, 'y2': y2}

def _term_dict(F):
    try:
        return F.monomial_coefficients()
    except AttributeError:
        return F.dict()

def _word_to_elem(word_letters):
    e = A.one()
    for s in word_letters:
        e *= GEN[s]
    return e

def reduce_once(F):
    out = A.zero()
    changed = False
    for mon, coeff in _term_dict(A(F)).items():
        letters = [str(a) for a in mon.to_word()]
        done = False
        for i in range(len(letters)-1):
            pair = (letters[i], letters[i+1])
            if pair in rules:
                prefix = _word_to_elem(letters[:i])
                suffix = _word_to_elem(letters[i+2:])
                out += coeff * (prefix * rules[pair] * suffix)
                changed = True
                done = True
                break
        if not done:
            out += coeff * _word_to_elem(letters)
    return out, changed

def NF(F, max_steps=20000):
    G = A(F)
    for _ in range(max_steps):
        G2, ch = reduce_once(G)
        if not ch:
            return G
        G = G2
    raise RuntimeError("It's not finished: increase max_steps 
    or check loops")

def comm(U,V):
    return NF(U*V - V*U)

w = x1^2 - f*x2^2

print("[w,x1] =", comm(w,x1))
print("[w,x2] =", comm(w,x2))

print("NF(y1*w - (1-f)*w*y1) =", NF(y1*w - (1-f)*w*y1))
print("NF(y2*w - (1-f)*w*y2) =", NF(y2*w - (1-f)*w*y2))
\end{verbatim}
\end{proof}

All remaining cases in our classification are handled in the same spirit. 

Namely, one fixes the PBW order \(x_1<x_2<y_1<y_2\), writes the defining relations as rewriting rules, and computes PBW normal forms to evaluate the commutators \([s,x_1]\), \([s,x_2]\), \([s,y_1]\), and \([s,y_2]\) for suitable candidate central elements \(s\). 
While the resulting expansions are often too long to be displayed in full detail, the procedure is completely explicit and relies only on the relations established in Propositions \ref{prop:x2x1n}, \ref{prop:NF-x2nx1}, \ref{prop:NF-y1x1n} and \ref{propfi}; the \textsc{SageMath} scripts provide a systematic verification of the required cancellations and thus confirm the stated descriptions of the centers in each case.

\end{proof}

\section{Application to the Zariski cancellation problem}\label{DAgeneralgenerators}

Following the approach and terminology in \cite{LezamaVenegas2020}, our goal in this section is to determine which of the double extensions of type {\rm (}$14641${\rm )} are cancellative by exploiting the explicit descriptions of their centers obtained in the previous section.

\begin{definition}[{\cite[Definition 4.1]{LezamaVenegas2020}}]
Let $A$ be a $\Bbbk$-algebra.
\begin{enumerate}
    \item[\rm (i)] $A$ is {\em cancellative} if $A[t]\cong B[t]$ for some $\Bbbk$-algebra $B$ implies that $A \cong B$.
    \item[\rm (ii)] A is {\em strongly cancellative} if, for any $d \geq 1$, the isomorphism $A[t_1, \ldots, t_d]\cong B[t_1,\ldots, t_d]$ for some $\Bbbk$-algebra $B$ implies that $A \cong B$.
    \item[\rm (iii)] $A$ is {\em universally cancellative} if, for any finitely generated coomutative $\Bbbk$-algebra and a domain $R$ such that $R/I=\Bbbk$ for some ideal $I \subset R$ and any $\Bbbk$-algebra $B$, $A \otimes R \cong B \otimes R$ implies that $A \cong B$.
\end{enumerate}
It is clear that universally cancellative implies strongly cancellative, and in turn, strongly cancellative implies cancellative.
\end{definition}

\begin{proposition}[{\cite[Proposition 4.2]{LezamaVenegas2020}}]
Let $\Bbbk$ be a field and $A$ be an algebra with $Z(A)=\Bbbk$. Then $A$ is universally cancellative.
\end{proposition}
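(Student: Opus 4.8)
The plan is to reduce everything to control of the centre. Write $\phi\colon A\otimes_\Bbbk R\xrightarrow{\ \sim\ }B\otimes_\Bbbk R$ for the given isomorphism, where $R$ is a finitely generated commutative domain over $\Bbbk$ with a $\Bbbk$-point, say $R/I=\Bbbk$. The first thing I would record is that for a commutative $\Bbbk$-algebra $R$ one has $Z(A\otimes_\Bbbk R)=Z(A)\otimes_\Bbbk R$: the inclusion $\supseteq$ is clear, and for $\subseteq$ one writes a central element as $\sum_i a_i\otimes r_i$ with the $r_i$ linearly independent over $\Bbbk$ and commutes it with $a\otimes 1$ for arbitrary $a\in A$, forcing each $a_i\in Z(A)$ (commutation with $1\otimes r$ being automatic). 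Since $Z(A)=\Bbbk$ by hypothesis, this yields $Z(A\otimes R)=\Bbbk\otimes R\cong R$, and likewise $Z(B\otimes R)=Z(B)\otimes R$.

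The second step is to deduce that $Z(B)=\Bbbk$. Applying $\phi$ to centres gives a $\Bbbk$-algebra isomorphism $R\cong Z(B)\otimes_\Bbbk R$. Composing $\mathrm{id}_{Z(B)}$ with the quotient $R\twoheadrightarrow R/I=\Bbbk$ exhibits $Z(B)$ as a quotient of $Z(B)\otimes R\cong R$, hence $Z(B)$ is a finitely generated commutative $\Bbbk$-algebra; and since $Z(B)\otimes R\cong R$ is a domain, so is $Z(B)$. Comparing transcendence degrees in the domain $Z(B)\otimes R\cong R$ forces $\operatorname{trdeg}_\Bbbk\operatorname{Frac}Z(B)=0$, so the finitely generated domain $Z(B)$ is integral over $\Bbbk$, hence a finite field extension $L$ of $\Bbbk$. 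Transporting the $\Bbbk$-point $R\twoheadrightarrow\Bbbk$ across $R\cong L\otimes_\Bbbk R$ and restricting to $L\otimes 1$ produces a $\Bbbk$-algebra homomorphism $L\to\Bbbk$, which is injective because $L$ is a field; therefore $L=\Bbbk$, i.e.\ $Z(B)=\Bbbk$. (When $\Bbbk$ is algebraically closed — the standing assumption elsewhere in the paper — this last sentence is immediate once $L/\Bbbk$ is finite.)

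The third step assembles the isomorphism $A\cong B$. By Steps 1--2 both $Z(A\otimes R)$ and $Z(B\otimes R)$ are canonically $1\otimes R$, so $\phi$ restricts to a $\Bbbk$-algebra automorphism $\theta$ of $R$ determined by $\phi(1_A\otimes r)=1_B\otimes\theta(r)$. Put $\mathfrak m:=\theta^{-1}(I)$, a maximal ideal of $R$ with $R/\mathfrak m\cong R/I=\Bbbk$. Since $1\otimes\mathfrak m$ is central, the two-sided ideal it generates in $A\otimes R$ is $A\otimes_\Bbbk\mathfrak m$, and flatness gives $(A\otimes R)/(A\otimes\mathfrak m)\cong A\otimes(R/\mathfrak m)=A$; similarly $(B\otimes R)/(B\otimes I)\cong B$. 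Because $\phi(1\otimes\mathfrak m)=1\otimes I$, the isomorphism $\phi$ carries the ideal $A\otimes\mathfrak m$ onto $B\otimes I$ and therefore descends to an isomorphism $A\cong B$, as required.

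The only genuinely delicate point is Step 2: $B$ is an a priori uncontrolled $\Bbbk$-algebra, so pinning down $Z(B)$ requires using both that $R$ is a finitely generated domain (for finite generation and domain-ness of $Z(B)$, together with the transcendence-degree count) and that $R$ has a $\Bbbk$-rational point (to upgrade ``finite extension of $\Bbbk$'' to ``equal to $\Bbbk$''). Once $Z(B)=\Bbbk$ is established, Steps 1 and 3 amount to routine bookkeeping with central ideals.
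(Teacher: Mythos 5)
Your argument is correct and is essentially the proof the paper relies on: the paper gives no argument of its own but simply cites Bell--Zhang \cite[Proposition 1.3]{BellZhang2017}, and your three steps (computing $Z(A\otimes_\Bbbk R)=Z(A)\otimes_\Bbbk R$, forcing $Z(B)=\Bbbk$ via finite generation, domain-ness, transcendence degree and the $\Bbbk$-point of $R$, then descending $\phi$ modulo the central maximal ideals $A\otimes\mathfrak m$ and $B\otimes I$) reproduce exactly that argument in full detail. No gaps.
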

\begin{proof}
    See \cite[Proposition 1.3]{BellZhang2017}.
\end{proof}

In view of the hypotheses listed in Table \ref{CenterofDOE} and the preceding proposition, we obtain the following corollary.

\begin{corollary} The following double Ore extensions of type ($14641$) are universally cancellative, and hence, cancellative:
\begin{itemize}
    \item $\mathbb{C}$.
    \item $\mathbb{E}$.
    \item $\mathbb{F}$.
    \item $\mathbb{G}$, provided that $p$ is not a root of unity.
    \item $\mathbb{H}$, in the case where the condition $r=f(a+2b)$ is not satisfied (i.e., in the ``otherwise'' case in Table \ref{CenterofDOE}).
    \item $\mathbb{I}$.
    \item $\mathbb{J}$.
    \item $\mathbb{M}$, provided that both $f-1$ and $1-f$ are not roots of unity.
    \item $\mathbb{N}$, provided that $f^2-g^2$ is not a root of unity.
    \item $\mathbb{O}$, provided that $1-f$ is not a root of unity.
    \item $\mathbb{P}$, provided that $f\notin\{0,2\}$.
    \item $\mathbb{S}$.
    \item $\mathbb{T}$.
    \item $\mathbb{U}$.
    \item $\mathbb{W}$, provided that $1+f$ is not a root of unity.
    \item $\mathbb{Z}$, provided that $1+f$ is not a root of unity.
\end{itemize}
\end{corollary}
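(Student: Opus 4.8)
The plan is to deduce the corollary directly from the center computations of Section~\ref{DAThreegenerators} together with the cancellation criterion recalled just above it. First I would match each algebra in the list, \emph{with its attached parameter restriction}, to the corresponding row of Table~\ref{CenterofDOE}, and observe that in every listed case the second column of that table is exactly $\Bbbk$. For the families with an unconditional entry ($\mathbb{C}$, $\mathbb{E}$, $\mathbb{F}$, $\mathbb{I}$, $\mathbb{J}$, $\mathbb{S}$, $\mathbb{T}$, $\mathbb{U}$) this is immediate. For the families whose table entry is a dichotomy --- $\mathbb{G}$, $\mathbb{H}$, $\mathbb{M}$, $\mathbb{N}$, $\mathbb{O}$, $\mathbb{P}$, $\mathbb{W}$, $\mathbb{Z}$ --- the point is that the hypothesis quoted in the corollary is precisely the one selecting the ``$Z(A)=\Bbbk$'' branch of that row: ``$p$ not a root of unity'' for $\mathbb{G}$, the ``otherwise'' case $r\neq f(a+2b)$ for $\mathbb{H}$, ``$1-f$ not a root of unity'' for $\mathbb{O}$, ``$f^2-g^2$ not a root of unity'' for $\mathbb{N}$, ``$f\notin\{0,2\}$'' for $\mathbb{P}$, ``$1+f$ not a root of unity'' for $\mathbb{W}$ and $\mathbb{Z}$, and similarly for $\mathbb{M}$. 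Hence Theorem~\ref{thm:center-main}(1) gives $Z(A)=\Bbbk$ for every $A$ in the list under the stated assumptions.

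Once $Z(A)=\Bbbk$ is established, the conclusion is immediate: \cite[Proposition~4.2]{LezamaVenegas2020} (equivalently \cite[Proposition~1.3]{BellZhang2017}) asserts that any $\Bbbk$-algebra with trivial center is universally cancellative, and, as noted after the definition of cancellativity, universally cancellative implies strongly cancellative, which in turn implies cancellative. Applying this to each member of the list yields exactly the statement of the corollary.

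There is no genuine analytic difficulty here --- the substantive work was already carried out in Theorem~\ref{thm:center-main} --- so the step deserving the most attention is simply the bookkeeping in the match-up above, and, relatedly, being explicit about why certain families are \emph{absent} from the list: the algebras $\mathbb{A}$, $\mathbb{D}$, $\mathbb{K}$, $\mathbb{L}$, $\mathbb{X}$, $\mathbb{Y}$ (and $\mathbb{G},\mathbb{H},\mathbb{P},\mathbb{W},\mathbb{Z}$ under the complementary parameter conditions) have centers strictly larger than $\Bbbk$, while for $\mathbb{B}$, $\mathbb{Q}$, $\mathbb{R}$, $\mathbb{V}$ only a central subalgebra is known (Table~\ref{subalgDOE}) and the triviality hypothesis of the criterion cannot be invoked. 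Thus the corollary records exactly those families of type $(14641)$ to which the $Z(A)=\Bbbk$ criterion of \cite{BellZhang2017,LezamaVenegas2020} applies.
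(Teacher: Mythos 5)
Your proposal is correct and matches the paper's argument exactly: the paper likewise derives the corollary by reading off the rows of Table~\ref{CenterofDOE} where $Z(A)=\Bbbk$ under the stated parameter restrictions and then applying \cite[Proposition 4.2]{LezamaVenegas2020} (i.e.\ \cite[Proposition 1.3]{BellZhang2017}). No further commentary is needed.
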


\section*{Appendix}

This appendix gathers the tabular data associated with the $26$ families
$\mathbb{A},\mathbb{B},\dots,\mathbb{Z}$ of Artin--Schelter regular double Ore extensions of type ($14641$).
Tables \ref{FirstTableDOE}-\ref{FourthTableDOE} list, for each family, the defining relations among the generators,
together with the corresponding structural matrices $\Sigma_{ij}$ and $M_{ij}$, the packaged datum $\{\Sigma,M,P,Q\}$,
and the parameter constraints under which the presentation yields a regular double extension.

These tables provide a compact “dictionary’’ between presentations and structure data in the sense of Zhang-Zhang,
and are intended as a quick reference point for the computations carried out in the main text, allowing the reader
to locate the precise relations and hypotheses used in each case without repeatedly returning to the original
classification.

\begin{landscape}
\begin{table}[h]
\caption{Double extensions}
\label{FirstTableDOE}
\begin{center}
\resizebox{20cm}{!}{
\setlength\extrarowheight{11pt}
\begin{tabular}{ |c|c|c|c|c|c| } 
\hline
Double extension & Relations defining the double extension  & $\Sigma_{ij}$ & $M_{ij}$ & Data $\{\Sigma, M, P, Q\}$ & Conditions \\
\hline
\multirow{3}{*}{$\mathbb{A}$} &  $x_2x_1 = x_1 x_2,\quad y_2y_1 = y_1y_2+y_{1}^{2}$, &  &  &  & \\ 
& $y_1x_1 = x_1y_1,  \quad y_1x_2=x_2y_1+x_1y_2$, & $\Sigma_{11}=\begin{bmatrix}
    1 & 0  \\ 0 & 1 
\end{bmatrix}, \quad \Sigma_{12}=\begin{bmatrix}
    0 & 0  \\ 1 & 0 
\end{bmatrix}, \quad \Sigma_{21}=\begin{bmatrix}
    0 & 0  \\ 0 & -2 
\end{bmatrix}, \quad \Sigma_{22}=\begin{bmatrix}
    1 & 0  \\ -1 & 1 
\end{bmatrix}$ & $M_{11}=\begin{bmatrix}
    1 & 0  \\ 0 & 1 
\end{bmatrix}, \quad M_{12}=\begin{bmatrix}
    0 & 0  \\ 0 & 0 
\end{bmatrix}, \quad M_{21}=\begin{bmatrix}
    0 & 1  \\ 0 & -1 
\end{bmatrix}, \quad M_{22}=\begin{bmatrix}
    1 & 0  \\ -2 & 1 
\end{bmatrix}$ & $\Sigma=\begin{bmatrix}
    1 & 0 & 0 & 0 \\ 0 & 1 & 1 & 0 \\ 0 & 0 & 1 & 0 \\ 0 & -2 & -1 & 1
\end{bmatrix}, \quad M=\begin{bmatrix}
    1 & 0 & 0 & 0 \\ 0 & 1 & 0 & 0 \\ 0 & 1 & 1 & 0 \\ 0 & -1 & -2 & 1
\end{bmatrix}, \quad P=(1, 1), \quad Q=(1, 0)$  &  \\ 
&   $y_2x_1 = x_1y_2, \quad y_2x_2=-2x_2y_1-x_1y_2+x_2y_2$ & &  &  & \\
\hline
\multirow{3}{*}{$\mathbb{B}$} &   $x_2x_1 = px_1 x_2, \quad y_2y_1 = py_1y_2$, & & & & \\ 
& $y_1x_1 = x_2y_2,  \quad y_1x_2=x_1y_2$,  & $\Sigma_{11}=\begin{bmatrix}
    0 & 0  \\ 0 & 0
\end{bmatrix}, \quad \Sigma_{12}=\begin{bmatrix}
    0 & 1  \\ 1 & 0 
\end{bmatrix}, \quad \Sigma_{21}=\begin{bmatrix}
    0 & -1  \\ 1 & 0
\end{bmatrix}, \quad \Sigma_{22}=\begin{bmatrix}
    0 & 0  \\ 0 & 0 
\end{bmatrix}$ & $M_{11}=\begin{bmatrix}
    0 & 0  \\ 0 & 0 
\end{bmatrix}, \quad M_{12}=\begin{bmatrix}
    0 & 1  \\ -1 & 0 
\end{bmatrix}, \quad M_{21}=\begin{bmatrix}
    0 & 1  \\ 1 & 0
\end{bmatrix}, \quad M_{22}=\begin{bmatrix}
    0 & 0  \\ 0 & 0 
\end{bmatrix}$  & $\Sigma=\begin{bmatrix}
    0 & 0 & 0 & 1 \\ 0 & 0 & 1 & 0 \\ 0 & -1 & 0 & 0 \\ 1 & 0 & 0 & 0
\end{bmatrix}, \quad M=\begin{bmatrix}
    0 & 0 & 0 & 1 \\ 0 & 0 & -1 & 0 \\ 0 & 1 & 0 & 0 \\ 1 & 0 & 0 & 0
\end{bmatrix}, \quad P=(p, 0), \quad Q=(p, 0)$  & $p^2 = -1$ \\ 
&    $y_2x_1 = -x_2y_1, \quad y_2x_2=x_1y_1$ & & & & \\
\hline
\multirow{3}{*}{$\mathbb{C}$} &  $x_2x_1 = px_1 x_2, \quad y_2y_1 = py_1y_2$, & & &  & \\ 
&  $y_1x_1 = -x_1y_1+p^2x_2y_1+x_1y_2-px_2y_2,  \quad y_1x_2 =-px_1y_1+x_2y_1+x_1y_2-px_2y_2$,  & $\Sigma_{11}=\begin{bmatrix}
    -1 & p^2  \\ -p & 1 
\end{bmatrix}, \quad \Sigma_{12}=\begin{bmatrix}
    1 & -p  \\ 1 & -p 
\end{bmatrix}, \quad \Sigma_{21}=\begin{bmatrix}
    -p & -2p^2  \\ -p & p^2 
\end{bmatrix}, \quad \Sigma_{22}=\begin{bmatrix}
    p & -p  \\ 1 & -1 
\end{bmatrix}$ & $M_{11}=\begin{bmatrix}
    -1 & 1  \\ -p & p 
\end{bmatrix}, \quad M_{12}=\begin{bmatrix}
    p^2 & -p  \\ -2p^2 & -p 
\end{bmatrix}, \quad M_{21}=\begin{bmatrix}
    -p & 1  \\ -p & 1 
\end{bmatrix}, \quad M_{22}=\begin{bmatrix}
    1 & -p  \\ p^2 & -1 
\end{bmatrix}$ &  $\Sigma=\begin{bmatrix}
    -1 & p^2 & 1 & -p \\ -p & 1 & 1 & -p \\ -p & -2p^2 & p & -p \\ -p & p^2 & 1 & -1
\end{bmatrix}, \quad M=\begin{bmatrix}
    -1 & 1 & p^2 & -p \\ -p & p & -2p^2 & -p \\ -p & 1 & 1 & -p \\ -p & 1 & p^2 & -1
\end{bmatrix}, \quad P=(p, 0), \quad Q=(p, 0)$  & $p^2+p+1 = 0$ \\ 
&    $y_2x_1 = -px_1y_1-2p^2x_2y_1+px_1y_2-px_2y_2, \quad y_2x_2 =-px_1y_1+p^2x_2y_1+x_1y_2-x_2y_2$ & & & & \\
\hline
\multirow{3}{*}{$\mathbb{D}$} &  $x_2x_1 = -x_1 x_2, \quad y_2y_1 = py_1y_2$, & & &  &  \\ 
& $y_1x_1 = -px_1y_1,  \quad y_1x_2=-p^2x_2y_1+x_1y_2$,  & $\Sigma_{11}=\begin{bmatrix}
    -p & 0  \\ 0 & -p^2 
\end{bmatrix}, \quad \Sigma_{12}=\begin{bmatrix}
    0 & 0  \\ 1 & 0 
\end{bmatrix}, \quad \Sigma_{21}=\begin{bmatrix}
    0 & 0  \\ 1 & 0 
\end{bmatrix}, \quad \Sigma_{22}=\begin{bmatrix}
    p & 0  \\ 0 & 1 
\end{bmatrix}$ & $M_{11}=\begin{bmatrix}
    -p & 0  \\ 0 & p 
\end{bmatrix}, \quad M_{12}=\begin{bmatrix}
    0 & 0  \\ 0 & 0 
\end{bmatrix}, \quad M_{21}=\begin{bmatrix}
    0 & 1  \\ 1 & 0 
\end{bmatrix}, \quad M_{22}=\begin{bmatrix}
    -p^2 & 0  \\ 0 & 1 
\end{bmatrix}$  & $\Sigma=\begin{bmatrix}
    -p & 0 & 0 & 0 \\ 0 & -p^2 & 1 & 0 \\ 0 & 0 & p & 0 \\ 1 & 0 & 0 & 1
\end{bmatrix}, \quad M=\begin{bmatrix}
    -p & 0 & 0 & 0 \\ 0 & p & 0 & 0 \\ 0 & 1 & -p^2 & 0 \\ 1 & 0 & 0 & 1
\end{bmatrix}, \quad P=(p, 0), \quad Q=(-1, 0)$  & $p\in \{-1,1\}$ \\ 
&    $y_2x_1 = px_1y_2, \quad y_2x_2=x_1y_1+x_2y_2$ &  & & & \\
\hline
\multirow{3}{*}{$\mathbb{E}$} &  $x_2x_1 = -x_1 x_2, \quad y_2y_1 = py_1y_2$, & & &  & \\ 
&  $y_1x_1 = x_1y_2+x_2y_2,  \quad y_1x_2=x_1y_2-x_2y_2$, & $\Sigma_{11}=\begin{bmatrix}
    0 & 0  \\ 0 & 0 
\end{bmatrix}, \quad \Sigma_{12}=\begin{bmatrix}
    1 & 1  \\ 1 & -1 
\end{bmatrix}, \quad \Sigma_{21}=\begin{bmatrix}
    -1 & 1  \\ 1 & 1 
\end{bmatrix}, \quad \Sigma_{22}=\begin{bmatrix}
    0 & 0  \\ 0 & 0 
\end{bmatrix}$ & $M_{11}=\begin{bmatrix}
    0 & 1  \\ -1 & 0 
\end{bmatrix}, \quad M_{12}=\begin{bmatrix}
    0 & 1  \\ 1 & 0 
\end{bmatrix}, \quad M_{21}=\begin{bmatrix}
    0 & 1  \\ 1 & 0 
\end{bmatrix}, \quad M_{22}=\begin{bmatrix}
    0 & -1  \\ 1 & 0 
\end{bmatrix}$ & $\Sigma=\begin{bmatrix}
    0 & 0 & 1 & 1 \\ 0 & 0 & 1 & -1 \\ -1 & 1 & 0 & 0 \\ 1 & 1 & 0 & 0
\end{bmatrix}, \quad M=\begin{bmatrix}
    0 & 1 & 0 & 1 \\ -1 & 0 & 1 & 0 \\ 0 & 1 & 0 & -1 \\ 1 & 0 & 1 & 0
\end{bmatrix}, \quad P=(p, 0), \quad Q=(-1, 0)$  & $p^2=-1$ \\ 
&    $y_2x_1 = -x_1y_1+x_2y_1, \quad y_2x_2=x_1y_1+x_2y_1$ & & & &  \\
\hline
\multirow{3}{*}{$\mathbb{F}$} &  $x_2x_1 = -x_1 x_2, \quad y_2y_1 = py_1y_2$, & & &  & \\ 
&  $y_1x_1 = -x_1y_1-px_2y_1+x_1y_2-x_2y_2,  \quad y_1x_2 =-px_1y_1+x_2y_1+x_1y_2+x_2y_2$, & $\Sigma_{11}=\begin{bmatrix}
    -1 & -p  \\ -p & 1 
\end{bmatrix}, \quad \Sigma_{12}=\begin{bmatrix}
    1 & -1  \\ 1 & 1 
\end{bmatrix}, \quad \Sigma_{21}=\begin{bmatrix}
    -p & p  \\ -p & -p 
\end{bmatrix}, \quad \Sigma_{22}=\begin{bmatrix}
    p & 1  \\ 1 & -p 
\end{bmatrix}$ & $M_{11}=\begin{bmatrix}
    +1 & 1  \\ -p & p 
\end{bmatrix}, \quad M_{12}=\begin{bmatrix}
    -p & -1  \\ p & 1 
\end{bmatrix}, \quad M_{21}=\begin{bmatrix}
    -p & 1  \\ -p & 1 
\end{bmatrix}, \quad M_{22}=\begin{bmatrix}
    1 & 1  \\ -p & -p 
\end{bmatrix}$ & $\Sigma=\begin{bmatrix}
    -1 & -p & 1 & -1 \\ -p & 1 & 1 & 1 \\ -p & p & p & 1 \\ -p & -p & 1 & -p
\end{bmatrix}, \quad M=\begin{bmatrix}
    -1 & 1 & -p & -1 \\ -p & p & p & 1 \\ -p & 1 & 1 & 1 \\ -p & 1 & -p & -p
\end{bmatrix}, \quad P=(p, 0), \quad Q=(-1, 0)$  & $p^2=-1$ \\ 
&    $y_2x_1 = -px_1y_1+px_2y_1+px_1y_2+x_2y_2, \quad y_2x_2 =-px_1y_1-px_2y_1+x_1y_2-px_2y_2$ & & &  & \\
\hline
\multirow{3}{*}{$\mathbb{G}$} &   $ x_2x_1 = x_1 x_2, \quad y_2y_1 = py_1y_2$, & & &  & \\ 
&  $y_1x_1 = px_1y_1,  \quad y_1x_2=px_1y_1+p^2x_2y_1+x_1y_2$, & $\Sigma_{11}=\begin{bmatrix}
    p & 0  \\ p & p^2 
\end{bmatrix}, \quad \Sigma_{12}=\begin{bmatrix}
    0 & 0  \\ 1 & 0 
\end{bmatrix}, \quad \Sigma_{21}=\begin{bmatrix}
    0 & 0  \\ f & 0 
\end{bmatrix}, \quad \Sigma_{22}=\begin{bmatrix}
    p & 0  \\ -1 & 1 
\end{bmatrix}$ & $M_{11}=\begin{bmatrix}
    p & 0  \\ 0 & p 
\end{bmatrix}, \quad M_{12}=\begin{bmatrix}
    0 & 0  \\ 0 & 0 
\end{bmatrix}, \quad M_{21}=\begin{bmatrix}
    p & 1  \\ f & -1 
\end{bmatrix}, \quad M_{22}=\begin{bmatrix}
    p^2 & 0  \\ 0 & 1 
\end{bmatrix}$  &  $\Sigma=\begin{bmatrix}
    p & 0 & 0 & 0 \\ p & p^2 & 1 & 0 \\ 0 & 0 & p & 0 \\ f & 0 & -1 & 1
\end{bmatrix}, \quad M=\begin{bmatrix}
    p & 0 & 0 & 0 \\ 0 & p & 0 & 0 \\ p & 1 & p^2 & 0 \\ f & -1 & 0 & 1
\end{bmatrix}, \quad P=(p, 0), \quad Q=(1, 0)$  & $p\not = 0, \pm 1$ and $f \not = 0$ \\ 
&    $y_2x_1 = px_1y_2, \quad y_2x_2=fx_1y_1-x_1y_2+x_2y_2$ & & & & \\
\hline
\end{tabular}
}
\end{center}
\end{table}
\end{landscape}

\begin{landscape}
{\huge{
\begin{table}[h]
\caption{Double extensions}
\label{SecondTableDOE}
\begin{center}
\resizebox{20cm}{!}{
\setlength\extrarowheight{11pt}
\begin{tabular}{ |c|c|c|c|c|c| } 
\hline
Double extension & Relations defining the double extension  & $\Sigma_{ij}$ & $M_{ij}$ & Data $\{\Sigma, M, P, Q\}$ & Conditions \\
\hline
\multirow{3}{*}{$\mathbb{H}$} &   $ x_2x_1 = x_1 x_2+x_1^2, \quad y_2y_1 = -y_1y_2$, &  & &  &\\ 
&  $y_1x_1 = x_1y_2,  \quad y_1x_2=fx_1y_2+x_2y_2$, & $\Sigma_{11}=\begin{bmatrix}
    0 & 0  \\ 0 & 0 
\end{bmatrix}, \quad \Sigma_{12}=\begin{bmatrix}
    1 & 0  \\ f & 1 
\end{bmatrix}, \quad \Sigma_{21}=\begin{bmatrix}
    1 & 0  \\ f & 1 
\end{bmatrix}, \quad \Sigma_{22}=\begin{bmatrix}
    0 & 0  \\ 0 & 0 
\end{bmatrix}$ & $M_{11}=\begin{bmatrix}
    0 & 1  \\ 1 & 0 
\end{bmatrix}, \quad M_{12}=\begin{bmatrix}
    0 & 0  \\ 0 & 0 
\end{bmatrix}, \quad M_{21}=\begin{bmatrix}
    0 & f  \\ f & 0 
\end{bmatrix}, \quad M_{22}=\begin{bmatrix}
    0 & 1  \\ 1 & 0 
\end{bmatrix}$ & $\Sigma=\begin{bmatrix}
    0 & 0 & 1 & 0 \\ 0 & 0 & f & 1 \\ 1 & 0 & 0 & 0 \\ f & 1 & 0 & 0
\end{bmatrix}, \quad M=\begin{bmatrix}
    0 & 1 & 0 & 0 \\ 1 & 0 & 0 & 0 \\ 0 & f & 0 & 1 \\ f & 0 & 1 & 0
\end{bmatrix}, \quad P=(-1, 0), \quad Q=(1, 1)$  & $f \not = 0$ \\ 
&   $y_2x_1 = x_1y_1, \quad y_2x_2=fx_1y_1+x_2y_1$ & &  & & \\
\hline
\multirow{3}{*}{$\mathbb{I}$} &  $ x_2x_1 = qx_1 x_2, \quad y_2y_1 = -y_1y_2$, & &  &  & \\ 
& $y_1x_1 = -qx_1y_1-qx_2y_1+x_1y_2-qx_2y_2,  \quad y_1x_2 = x_1y_1+x_2y_1+x_1y_2-qx_2y_2$,  & $\Sigma_{11}=\begin{bmatrix}
    -q & -q  \\ 1 & 1 
\end{bmatrix}, \quad \Sigma_{12}=\begin{bmatrix}
    1 & -q  \\ 1 & -q 
\end{bmatrix}, \quad \Sigma_{21}=\begin{bmatrix}
    1 & q  \\ -1 & -q 
\end{bmatrix}, \quad \Sigma_{22}=\begin{bmatrix}
    q & -q  \\ 1 & -1 
\end{bmatrix}$ & $M_{11}=\begin{bmatrix}
    -q & 1  \\ 1 & q 
\end{bmatrix}, \quad M_{12}=\begin{bmatrix}
    -q & -q  \\ q & -q 
\end{bmatrix}, \quad M_{21}=\begin{bmatrix}
    1 & 1  \\ -1 & 1 
\end{bmatrix}, \quad M_{22}=\begin{bmatrix}
    1 & -q  \\ -q & -1 
\end{bmatrix}$  & $\Sigma=\begin{bmatrix}
    -q & -q & 1 & -q \\ 1 & 1 & 1 & -q \\ 1 & q & q & -q \\ -1 & -q & 1 & -1
\end{bmatrix}, \quad M=\begin{bmatrix}
    -q & 1 & -q & -q \\ 1 & q & q & -q \\ 1 & 1 & 1 & -q \\ -1 & 1 & -q & -1
\end{bmatrix}, \quad P=(-1, 0), \quad Q=(q, 0)$  & $q^2=-1$ \\ 
&    $y_2x_1 = x_1y_1+qx_2y_1+qx_1y_2-qx_2y_2, \quad y_2x_2 =-x_1y_1-qx_2y_1+x_1y_2-x_2y_2$ & & & & \\
\hline
\multirow{3}{*}{$\mathbb{J}$} &  $ x_2x_1 = qx_1 x_2, \quad y_2y_1 = -y_1y_2$, & & & &  \\ 
&  $y_1x_1 = x_2y_1+x_2y_2,  \quad y_1x_2=-x_1y_1+x_1y_2$, & $\Sigma_{11}=\begin{bmatrix}
    0 & 1  \\ -1 & 0 
\end{bmatrix}, \quad \Sigma_{12}=\begin{bmatrix}
    0 & 1  \\ 1 & 0 
\end{bmatrix}, \quad \Sigma_{21}=\begin{bmatrix}
    0 & 1  \\ 1 & 0 
\end{bmatrix}, \quad \Sigma_{22}=\begin{bmatrix}
    0 & -1  \\ 1 & 0 
\end{bmatrix}$ & $M_{11}=\begin{bmatrix}
    0 & 0  \\ 0 & 0 
\end{bmatrix}, \quad M_{12}=\begin{bmatrix}
    1 & 1  \\ 1 & -1 
\end{bmatrix}, \quad M_{21}=\begin{bmatrix}
    -1 & 1  \\ 1 & 1 
\end{bmatrix}, \quad M_{22}=\begin{bmatrix}
    0 & 0  \\ 0 & 0 
\end{bmatrix}$  &  $\Sigma=\begin{bmatrix}
    0 & 1 & 0 & 1 \\ -1 & 0 & 1 & 0 \\ 0 & 1 & 0 & -1 \\ 1 & 0 & 1 & 0
\end{bmatrix}, \quad M=\begin{bmatrix}
    0 & 0 & 1 & 1 \\ 0 & 0 & 1 & -1 \\ -1 & 1 & 0 & 0 \\ 1 & 1 & 0 & 0
\end{bmatrix}, \quad P=(-1, 0), \quad Q=(q, 0)$  & $q^2=-1$ \\ 
&    $y_2x_1 = x_2y_1-x_2y_2, \quad y_2x_2=x_1y_1+x_1y_2$ &  & &  & \\
\hline
\multirow{3}{*}{$\mathbb{K}$} &  $ x_2x_1 = qx_1 x_2, \quad y_2y_1 = -y_1y_2$, &  & & & \\ 
&  $y_1x_1 = x_1y_1,  \quad y_1x_2=x_2y_2$, & $\Sigma_{11}=\begin{bmatrix}
    1 & 0  \\ 0 & 0 
\end{bmatrix}, \quad \Sigma_{12}=\begin{bmatrix}
    0 & 0  \\ 0 & 1 
\end{bmatrix}, \quad \Sigma_{21}=\begin{bmatrix}
    0 & 0  \\ 0 & f 
\end{bmatrix}, \quad \Sigma_{22}=\begin{bmatrix}
    1 & 0  \\ 0 & 0 
\end{bmatrix}$ & $M_{11}=\begin{bmatrix}
    1 & 0  \\ 0 & 1 
\end{bmatrix}, \quad M_{12}=\begin{bmatrix}
    0 & 0  \\ 0 & 0 
\end{bmatrix}, \quad M_{21}=\begin{bmatrix}
    0 & 0  \\ 0 & 0 
\end{bmatrix}, \quad M_{22}=\begin{bmatrix}
    0 & 1  \\ f & 0 
\end{bmatrix}$ & $\Sigma=\begin{bmatrix}
    1 & 0 & 0 & 0 \\ 0 & 0 & 0 & 1 \\ 0 & 0 & 1 & 0 \\ 0 & f & 0 & 0
\end{bmatrix}, \quad M=\begin{bmatrix}
    1 & 0 & 0 & 0 \\ 0 & 1 & 0 & 0 \\ 0 & 0 & 0 & 1 \\ 0 & 0 & f & 0
\end{bmatrix}, \quad P=(-1, 0), \quad Q=(q, 0)$  & $q\in \{-1,1\}$ and $f \not = 0$ \\ 
&    $y_2x_1 = x_1y_2, \quad y_2x_2=fx_2y_1$ & & & & \\
\hline
\multirow{3}{*}{$\mathbb{L}$} &  $ x_2x_1 = qx_1 x_2, \quad y_2y_1 = -y_1y_2$, &  & &  & \\ 
&  $y_1x_1 = fx_1y_2,  \quad y_1x_2=x_2y_2$, & $\Sigma_{11}=\begin{bmatrix}
    0 & 0  \\ 0 & 0 
\end{bmatrix}, \quad \Sigma_{12}=\begin{bmatrix}
    f & 0  \\ 0 & 1 
\end{bmatrix}, \quad \Sigma_{21}=\begin{bmatrix}
    f & 0  \\ 0 & 1 
\end{bmatrix}, \quad \Sigma_{22}=\begin{bmatrix}
    0 & 0  \\ 0 & 0 
\end{bmatrix}$ & $M_{11}=\begin{bmatrix}
    0 & f  \\ f & 0 
\end{bmatrix}, \quad M_{12}=\begin{bmatrix}
    0 & 0  \\ 0 & 0 
\end{bmatrix}, \quad M_{21}=\begin{bmatrix}
    0 & 0  \\ 0 & 0 
\end{bmatrix}, \quad M_{22}=\begin{bmatrix}
    0 & 1  \\ 1 & 0 
\end{bmatrix}$ & $\Sigma=\begin{bmatrix}
    0 & 0 & f & 0 \\ 0 & 0 & 0 & 1 \\ f & 0 & 0 & 0 \\ 0 & 1 & 0 & 0
\end{bmatrix}, \quad M=\begin{bmatrix}
    0 & f & 0 & 0 \\ f & 0 & 0 & 0 \\ 0 & 0 & 0 & 1 \\ 0 & 0 & 1 & 0
\end{bmatrix}, \quad P=(-1, 0), \quad Q=(q, 0)$  & $q\in \{-1,1\}$ and $f \not = 0$ \\ 
&    $y_2x_1 = fx_1y_1, \quad y_2x_2=x_2y_1$ & & & & \\
\hline
\multirow{3}{*}{$\mathbb{M}$} & $ x_2x_1 = -x_1 x_2, \quad y_2y_1 = -y_1y_2$, &  & &  & \\ 
& $y_1x_1 = x_2y_1+x_1y_2,  \quad y_1x_2=fx_1y_1-x_2y_2$,  & $\Sigma_{11}=\begin{bmatrix}
    0 & 1  \\ f & 0 
\end{bmatrix}, \quad \Sigma_{12}=\begin{bmatrix}
    1 & 0  \\ 0 & -1 
\end{bmatrix}, \quad \Sigma_{21}=\begin{bmatrix}
    1 & 0  \\ 0 & -1 
\end{bmatrix}, \quad \Sigma_{22}=\begin{bmatrix}
    0 & -1  \\ -f & 0 
\end{bmatrix}$ & $M_{11}=\begin{bmatrix}
    0 & 1  \\ 1 & 0 
\end{bmatrix}, \quad M_{12}=\begin{bmatrix}
    1 & 0  \\ 0 & -1 
\end{bmatrix}, \quad M_{21}=\begin{bmatrix}
    f & 0  \\ 0 & -f 
\end{bmatrix}, \quad M_{22}=\begin{bmatrix}
    0 & -1  \\ -1 & 0 
\end{bmatrix}$ &  $\Sigma=\begin{bmatrix}
    0 & 1 & 1 & 0 \\ f & 0 & 0 & -1 \\ 1 & 0 & 0 & -1 \\ 0 & -1 & -f & 0
\end{bmatrix}, \quad M=\begin{bmatrix}
    0 & 1 & 1 & 0 \\ 1 & 0 & 0 & -1 \\ f & 0 & 0 & -1 \\ 0 & -f & -1 & 0
\end{bmatrix}, \quad P=(-1, 0), \quad Q=(-1, 0)$  & $f \not = 1$ \\ 
&   $y_2x_1 = x_1y_1-x_2y_2, \quad y_2x_2=-x_2y_1-fx_1y_2$ &  & & & \\
\hline
\multirow{3}{*}{$\mathbb{N}$} &  $ x_2x_1 = -x_1 x_2, \quad y_2y_1 = -y_1y_2$, & & &  & \\ 
&  $y_1x_1 = -gx_2y_1+fx_2y_2,  \quad y_1x_2=gx_1y_1+fx_1y_2$, & $\Sigma_{11}=\begin{bmatrix}
    0 & -g  \\ g & 0 
\end{bmatrix}, \quad \Sigma_{12}=\begin{bmatrix}
    0 & f  \\ f & 0 
\end{bmatrix}, \quad \Sigma_{21}=\begin{bmatrix}
    0 & f  \\ f & 0 
\end{bmatrix}, \quad \Sigma_{22}=\begin{bmatrix}
    0 & -g  \\ g & 0 
\end{bmatrix}$ & $M_{11}=\begin{bmatrix}
    0 & 0  \\ 0 & 0 
\end{bmatrix}, \quad M_{12}=\begin{bmatrix}
    -g & f  \\ f & -g 
\end{bmatrix}, \quad M_{21}=\begin{bmatrix}
    g & f  \\ f & g 
\end{bmatrix}, \quad M_{22}=\begin{bmatrix}
    0 & 0  \\ 0 & 0 
\end{bmatrix}$  &  $\Sigma=\begin{bmatrix}
    0 & -g & 0 & f \\ g & 0 & f & 0 \\ 0 & f & 0 & -g \\ f & 0 & g & 0
\end{bmatrix}, \quad M=\begin{bmatrix}
    1 & 0 & -g & f \\ 0 & 0 & f & -g \\ g & f & 0 & 0 \\ f & g & 0 & 0
\end{bmatrix}, \quad P=(-1, 0), \quad Q=(-1, 0)$  & $f^2 \not = g^2$ \\ 
&    $y_2x_1 = fx_2y_1-gx_2y_2, \quad y_2x_2=fx_1y_1+gx_1y_2$ & & & &
\\
\hline
\end{tabular}
}
\end{center}
\end{table}
}}
\end{landscape}

\begin{landscape}
{\huge{
\begin{table}[h]
\caption{Double extensions}
\label{ThirdTableDOE}
\begin{center}
\resizebox{20cm}{!}{
\setlength\extrarowheight{7pt}
\begin{tabular}{ |c|c|c|c|c|c| } 
\hline
Double extension & Relations defining the double extension  & $\Sigma_{ij}$ & $M_{ij}$ & Data $\{\Sigma, M, P, Q\}$ & Conditions
\\
\hline
\multirow{3}{*}{$\mathbb{O}$} &   $ x_2x_1 = -x_1 x_2, \quad y_2y_1 = -y_1y_2$, & & & & \\ 
&  $y_1x_1 = x_1y_1+fx_2y_2,  \quad y_1x_2=-x_2y_1+x_1y_2$, & $\Sigma_{11}=\begin{bmatrix}
    1 & 0  \\ 0 & -1 
\end{bmatrix}, \quad \Sigma_{12}=\begin{bmatrix}
    0 & f  \\ 1 & 0 
\end{bmatrix}, \quad \Sigma_{21}=\begin{bmatrix}
    0 & f  \\ 1 & 0 
\end{bmatrix}, \quad \Sigma_{22}=\begin{bmatrix}
    -1 & 0  \\ 0 & 1 
\end{bmatrix}$ & $M_{11}=\begin{bmatrix}
    1 & 0  \\ 0 & -1 
\end{bmatrix}, \quad M_{12}=\begin{bmatrix}
    0 & f  \\ f & 0 
\end{bmatrix}, \quad M_{21}=\begin{bmatrix}
    0 & 1  \\ 1 & 0 
\end{bmatrix}, \quad M_{22}=\begin{bmatrix}
    -1 & 0  \\ 0 & 1 
\end{bmatrix}$  &  $\Sigma=\begin{bmatrix}
    1 & 0 & 0 & f \\ 0 & -1 & 1 & 0 \\ 0 & f & -1 & 0 \\ 1 & 0 & 0 & 1
\end{bmatrix}, \quad M=\begin{bmatrix}
    1 & 0 & 0 & f \\ 0 & -1 & f & 0 \\ 0 & 1 & -1 & 0 \\ 1 & 0 & 0 & 1
\end{bmatrix}, \quad P=(-1, 0), \quad Q=(-1, 0)$  & $f \not = -1$ \\ 
&  $y_2x_1 = fx_2y_1-x_1y_2, \quad y_2x_2=x_1y_1+x_2y_2$ &  &  & &   \\
\hline
\multirow{3}{*}{$\mathbb{P}$} &  $ x_2x_1 = -x_1 x_2, \quad y_2y_1 = -y_1y_2$, &  & & & \\ 
& $y_1x_1 = x_1y_2+fx_2y_2,  \quad y_1x_2=x_1y_2+x_2y_2$,  & $\Sigma_{11}=\begin{bmatrix}
    0 & 0  \\ 0 & 0 
\end{bmatrix}, \quad \Sigma_{12}=\begin{bmatrix}
    1 & f  \\ 1 & 1 
\end{bmatrix}, \quad \Sigma_{21}=\begin{bmatrix}
    1 & -f  \\ -1 & 1 
\end{bmatrix}, \quad \Sigma_{22}=\begin{bmatrix}
    0 & 0  \\ 0 & 0 
\end{bmatrix}$ & $M_{11}=\begin{bmatrix}
    0 & 1  \\ 1 & 0 
\end{bmatrix}, \quad M_{12}=\begin{bmatrix}
    0 & f  \\ -f & 0 
\end{bmatrix}, \quad M_{21}=\begin{bmatrix}
    0 & 1  \\ -1 & 0 
\end{bmatrix}, \quad M_{22}=\begin{bmatrix}
    0 & 1  \\ 1 & 0 
\end{bmatrix}$  &  $\Sigma=\begin{bmatrix}
    0 & 0 & 1 & f \\ 0 & 0 & 1 & 1 \\ 1 & -f & 0 & 0 \\ -1 & 1 & 0 & 0
\end{bmatrix}, \quad M=\begin{bmatrix}
    0 & 1 & 0 & f \\ 1 & 0 & -f & 0 \\ 0 & 1 & 0 & 1 \\ -1 & 0 & 1 & 0
\end{bmatrix}, \quad P=(-1, 0), \quad Q=(-1, 0)$  & $f \not = -1$ \\ 
&    $y_2x_1 = x_1y_1-fx_2y_1, \quad y_2x_2=-x_1y_1+x_2y_1$ &  & & & \\
\hline
\multirow{3}{*}{$\mathbb{Q}$} &  $x_2x_1 = -x_1 x_2,\quad y_2y_1 = -y_1y_2$, & & &  & \\ 
&  $y_1x_1 = x_1y_2,  \quad y_1x_2=x_1y_1+x_2y_1+x_1y_2$, & $\Sigma_{11}=\begin{bmatrix}
    0 & 0  \\ 1 & 1 
\end{bmatrix}, \quad \Sigma_{12}=\begin{bmatrix}
    1 & 0  \\ 1 & 0 
\end{bmatrix}, \quad \Sigma_{21}=\begin{bmatrix}
    -1 & 0  \\ 1 & 0 
\end{bmatrix}, \quad \Sigma_{22}=\begin{bmatrix}
    0 & 0  \\ -1 & 1 
\end{bmatrix}$ & $M_{11}=\begin{bmatrix}
    0 & 1  \\ -1 & 0 
\end{bmatrix}, \quad M_{12}=\begin{bmatrix}
    0 & 0  \\ 0 & 0 
\end{bmatrix}, \quad M_{21}=\begin{bmatrix}
    1 & 1  \\ 1 & -1 
\end{bmatrix}, \quad M_{22}=\begin{bmatrix}
    1 & 0  \\ 0 & 1 
\end{bmatrix}$  & $\Sigma=\begin{bmatrix}
    0 & 0 & 1 & 0 \\ 1 & 1 & 1 & 0 \\ -1 & 0 & 0 & 0 \\ 1 & 0 & -1 & 1
\end{bmatrix}, \quad M=\begin{bmatrix}
    0 & 1 & 0 & 0 \\ -1 & 0 & 0 & 0 \\ 1 & 1 & 1 & 0 \\ 1 & -1 & 0 & 1
\end{bmatrix}, \quad P=(-1, 0), \quad Q=(-1, 0)$  &  \\ 
&    $y_2x_1 = -x_1y_1, \quad y_2x_2=x_1y_1-x_1y_2+x_2y_2$ & & & & \\
\hline
\multirow{3}{*}{$\mathbb{R}$} &  $x_2x_1 = -x_1 x_2,\quad y_2y_1 = -y_1y_2$, & & &  & \\ 
&  $y_1x_1 = x_1y_1+x_2y_1+x_1y_2,  \quad y_1x_2=x_1y_2$, & $\Sigma_{11}=\begin{bmatrix}
    1 & 1  \\ 0 & 0 
\end{bmatrix}, \quad \Sigma_{12}=\begin{bmatrix}
    1 & 0  \\ 1 & 0 
\end{bmatrix}, \quad \Sigma_{21}=\begin{bmatrix}
    0 & 1  \\ 0 & -1 
\end{bmatrix}, \quad \Sigma_{22}=\begin{bmatrix}
    0 & 0  \\ -1 & 1 
\end{bmatrix}$ & $M_{11}=\begin{bmatrix}
    1 & 1  \\ 0 & 0 
\end{bmatrix}, \quad M_{12}=\begin{bmatrix}
    1 & 0  \\ 1 & 0 
\end{bmatrix}, \quad M_{21}=\begin{bmatrix}
    0 & 1  \\ 0 & -1 
\end{bmatrix}, \quad M_{22}=\begin{bmatrix}
    0 & 0  \\ -1 & 1 
\end{bmatrix}$ & $\Sigma=\begin{bmatrix}
    1 & 1 & 1 & 0 \\ 0 & 0 & 1 & 0 \\ 0 & 1 & 0 & 0 \\ 0 & -1 & -1 & 1
\end{bmatrix}, \quad M=\begin{bmatrix}
    1 & 1 & 1 & 0 \\ 0 & 0 & 1 & 0 \\ 0 & 1 & 0 & 0 \\ 0 & -1 & -1 & 1
\end{bmatrix}, \quad P=(-1, 0), \quad Q=(-1, 0)$  &  \\ 
&    $y_2x_1 = x_2y_1, \quad y_2x_2=-x_2y_1-x_1y_2+x_2y_2$ & & & & \\
\hline
\multirow{3}{*}{$\mathbb{S}$} &  $x_2x_1 = -x_1 x_2,\quad y_2y_1 = -y_1y_2$, & & &  & \\ 
& $y_1x_1 = -x_1y_1+x_2y_1+x_1y_2+x_2y_2,  \quad y_1x_2 = x_1y_1-x_2y_1+x_1y_2+x_2y_2$,  & $\Sigma_{11}=\begin{bmatrix}
    -1 & 1  \\ 1 & -1 
\end{bmatrix}, \quad \Sigma_{12}=\begin{bmatrix}
    1 & 1  \\ 1 & 1 
\end{bmatrix}, \quad \Sigma_{21}=\begin{bmatrix}
    1 & 1  \\ 1 & 1 
\end{bmatrix}, \quad \Sigma_{22}=\begin{bmatrix}
    -1 & 1  \\ 1 & -1 
\end{bmatrix}$ & $M_{11}=\begin{bmatrix}
    -1 & 1  \\ 1 & -1 
\end{bmatrix}, \quad M_{12}=\begin{bmatrix}
    1 & 1  \\ 1 & 1 
\end{bmatrix}, \quad M_{21}=\begin{bmatrix}
    1 & 1  \\ 1 & 1 
\end{bmatrix}, \quad M_{22}=\begin{bmatrix}
    -1 & 1  \\ 1 & -1 
\end{bmatrix}$  &  $\Sigma=\begin{bmatrix}
    -1 & 1 & 1 & 1 \\ 1 & -1 & 1 & 1 \\ 1 & 1 & -1 & 1 \\ 1 & 1 & 1 & -1
\end{bmatrix}, \quad M=\begin{bmatrix}
    -1 & 1 & 1 & 1 \\ 1 & -1 & 1 & 1 \\ 1 & 1 & -1 & 1 \\ 1 & 1 & 1 & -1
\end{bmatrix}, \quad P=(-1, 0), \quad Q=(-1, 0)$  &  \\ 
&    $y_2x_1 = x_1y_1+x_2y_1-x_1y_2+x_2y_2, \quad y_2x_2 =x_1y_1+x_2y_1+x_1y_2-x_2y_2$ & & & &  \\
\hline
\multirow{3}{*}{$\mathbb{T}$} &  $x_2x_1 = -x_1 x_2,\quad y_2y_1 = -y_1y_2$, & & &  & \\ 
& $y_1x_1 = -x_1y_1+x_2y_1+x_1y_2+x_2y_2,  \quad y_1x_2 = x_1y_1-x_2y_1+x_1y_2+x_2y_2$,  & $\Sigma_{11}=\begin{bmatrix}
    -1 & 1  \\ 1 & -1
\end{bmatrix}, \quad \Sigma_{12}=\begin{bmatrix}
    1 & 1  \\ 1 & 1 
\end{bmatrix}, \quad \Sigma_{21}=\begin{bmatrix}
    1 & 1  \\ 1 & 1 
\end{bmatrix}, \quad \Sigma_{22}=\begin{bmatrix}
    1 & -1  \\ -1 & 1 
\end{bmatrix}$ & $M_{11}=\begin{bmatrix}
    -1 & 1  \\ 1 & 1 
\end{bmatrix}, \quad M_{12}=\begin{bmatrix}
    1 & 1  \\ 1 & -1 
\end{bmatrix}, \quad M_{21}=\begin{bmatrix}
    1 & 1  \\ 1 & -1 
\end{bmatrix}, \quad M_{22}=\begin{bmatrix}
    -1 & 1  \\ 1 & 1 
\end{bmatrix}$  & $\Sigma=\begin{bmatrix}
    -1 & 1 & 1 & 1 \\ 1 & -1 & 1 & 1 \\ 1 & 1 & 1 & -1 \\ 1 & 1 & -1 & 1
\end{bmatrix}, \quad M=\begin{bmatrix}
    -1 & 1 & 1 & 1 \\ 1 & 1 & 1 & -1 \\ 1 & 1 & -1 & 1 \\ 1 & -1 & 1 & 1
\end{bmatrix}, \quad P=(-1, 0), \quad Q=(-1, 0)$  &  \\ 
&   $y_2x_1 = x_1y_1+x_2y_1+x_1y_2-x_2y_2, \quad y_2x_2 =x_1y_1+x_2y_1-x_1y_2+x_2y_2$ & &  & & \\
\hline
\end{tabular}
}
\end{center}
\end{table}
}}
\end{landscape}

\begin{landscape}
{\huge{
\begin{table}[h]
\caption{Double extensions}
\label{FourthTableDOE}
\begin{center}
\resizebox{20cm}{!}{
\setlength\extrarowheight{7pt}
\begin{tabular}{ |c|c|c|c|c|c| } 
\hline
Double extension & Relations defining the double extension  & $\Sigma_{ij}$ & $M_{ij}$ & Data $\{\Sigma, M, P, Q\}$ & Conditions
\\
\hline
\multirow{3}{*}{$\mathbb{U}$} &  $x_2x_1 = -x_1 x_2,\quad y_2y_1 = -y_1y_2$, &  & & & \\ 
&  $y_1x_1 = -x_1y_1+x_2y_1+x_1y_2+x_2y_2,  \quad y_1x_2 = x_1y_1+x_2y_1+x_1y_2-x_2y_2$, & $\Sigma_{11}=\begin{bmatrix}
    -1 & 1  \\ 1 & 1 
\end{bmatrix}, \quad \Sigma_{12}=\begin{bmatrix}
    1 & 1  \\ 1 & -1 
\end{bmatrix}, \quad \Sigma_{21}=\begin{bmatrix}
    1 & 1  \\ 1 & -1 
\end{bmatrix}, \quad \Sigma_{22}=\begin{bmatrix}
    -1 & 1  \\ 1 & 1 
\end{bmatrix}$ & $M_{11}=\begin{bmatrix}
    -1 & 1  \\ 1 & -1 
\end{bmatrix}, \quad M_{12}=\begin{bmatrix}
    1 & 1  \\ 1 & 1 
\end{bmatrix}, \quad M_{21}=\begin{bmatrix}
    1 & 1  \\ 1 & 1 
\end{bmatrix}, \quad M_{22}=\begin{bmatrix}
    1 & -1  \\ -1 & 1 
\end{bmatrix}$   & $\Sigma=\begin{bmatrix}
    -1 & 1 & 1 & 1 \\ 1 & 1 & 1 & -1 \\ 1 & 1 & -1 & 1 \\ 1 & -1 & 1 & 1
\end{bmatrix}, \quad M=\begin{bmatrix}
    -1 & 1 & 1 & 1 \\ 1 & -1 & 1 & 1 \\ 1 & 1 & 1 & -1 \\ 1 & 1 & -1 & 1
\end{bmatrix}, \quad P=(-1, 0), \quad Q=(-1, 0)$  &  \\ 
&    $y_2x_1 = x_1y_1+x_2y_1-x_1y_2+x_2y_2, \quad y_2x_2 =x_1y_1-x_2y_1+x_1y_2+x_2y_2$ & & & &  \\
\hline
\multirow{3}{*}{$\mathbb{V}$} & $x_2x_1 = x_1 x_2,\quad y_2y_1 = -y_1y_2$, & & &  &  \\ 
& $y_1x_1 = x_2y_1+x_1y_2,  \quad y_1x_2= x_2y_1$,  & $\Sigma_{11}=\begin{bmatrix}
    0 & 1  \\ 0 & 1 
\end{bmatrix}, \quad \Sigma_{12}=\begin{bmatrix}
    1 & 0  \\ 0 & 0 
\end{bmatrix}, \quad \Sigma_{21}=\begin{bmatrix}
    -1 & 1  \\ 0 & 0 
\end{bmatrix}, \quad \Sigma_{22}=\begin{bmatrix}
    0 & 0  \\ 0 & 1 
\end{bmatrix}$ & $M_{11}=\begin{bmatrix}
    0 & 1  \\ -1 & 0 
\end{bmatrix}, \quad M_{12}=\begin{bmatrix}
    1 & 0  \\ 1 & 0 
\end{bmatrix}, \quad M_{21}=\begin{bmatrix}
    0 & 0  \\ 0 & 0 
\end{bmatrix}, \quad M_{22}=\begin{bmatrix}
    1 & 0  \\ 0 & 1 
\end{bmatrix}$  & $\Sigma=\begin{bmatrix}
    0 & 1 & 1 & 0 \\ 0 & 1 & 0 & 0 \\ -1 & 1 & 0 & 0 \\ 0 & 0 & 0 & 1
\end{bmatrix}, \quad M=\begin{bmatrix}
    0 & 1 & 1 & 0 \\ -1 & 0 & 1 & 0 \\ 0 & 0 & 1 & 0 \\ 0 & 0 & 0 & 1
\end{bmatrix}, \quad P=(-1, 0), \quad Q=(1, 0)$  &  \\ 
&    $y_2x_1 = -x_1y_1+x_2y_1, \quad y_2x_2=x_2y_2$ & & & & \\
\hline
\multirow{3}{*}{$\mathbb{W}$} &  $x_2x_1 = x_1 x_2,\quad y_2y_1 = -y_1y_2$, & & &  & \\ 
&  $y_1x_1 = fx_2y_1+x_1y_2,  \quad y_1x_2= x_1y_1-x_2y_2$, & $\Sigma_{11}=\begin{bmatrix}
    0 & f  \\ 1 & 0 
\end{bmatrix}, \quad \Sigma_{12}=\begin{bmatrix}
    1 & 0  \\ 0 & -1 
\end{bmatrix}, \quad \Sigma_{21}=\begin{bmatrix}
    1 & 0  \\ 0 & -1 
\end{bmatrix}, \quad \Sigma_{22}=\begin{bmatrix}
    0 & f  \\ 1 & 0 
\end{bmatrix}$ & $M_{11}=\begin{bmatrix}
    0 & 1  \\ 1 & 0 
\end{bmatrix}, \quad M_{12}=\begin{bmatrix}
    f & 0  \\ 0 & f 
\end{bmatrix}, \quad M_{21}=\begin{bmatrix}
    1 & 0  \\ 0 & 1 
\end{bmatrix}, \quad M_{22}=\begin{bmatrix}
    0 & -1  \\ -1 & 0 
\end{bmatrix}$ &  $\Sigma=\begin{bmatrix}
    0 & f & 1 & 0 \\ 1 & 0 & 0 & -1 \\ 1 & 0 & 0 & f \\ 0 & -1 & 1 & 0
\end{bmatrix}, \quad M=\begin{bmatrix}
    0 & 1 & f & 0 \\ 1 & 0 & 0 & f \\ 1 & 0 & 0 & -1 \\ 0 & 1 & -1 & 0
\end{bmatrix}, \quad P=(-1, 0), \quad Q=(1, 0)$  & $f \not = -1$ \\ 
&    $y_2x_1 = x_1y_1+fx_2y_2, \quad y_2x_2=-x_2y_1+x_1y_2$ & & & & \\
\hline
\multirow{3}{*}{$\mathbb{X}$} &   $x_2x_1 = x_1 x_2,\quad y_2y_1 = -y_1y_2$, & & & & \\ 
&  $y_1x_1 = x_1y_2,  \quad y_1x_2= x_1y_2+x_2y_2$, & $\Sigma_{11}=\begin{bmatrix}
    0 & 0  \\ 0 & 0 
\end{bmatrix}, \quad \Sigma_{12}=\begin{bmatrix}
    1 & 0  \\ 1 & 1 
\end{bmatrix}, \quad \Sigma_{21}=\begin{bmatrix}
    1 & 0  \\ 1 & 1 
\end{bmatrix}, \quad \Sigma_{22}=\begin{bmatrix}
    0 & 0  \\ 0 & 0 
\end{bmatrix}$ & $M_{11}=\begin{bmatrix}
    0 & 1  \\ 1 & 0 
\end{bmatrix}, \quad M_{12}=\begin{bmatrix}
    0 & 0  \\ 0 & 0 
\end{bmatrix}, \quad M_{21}=\begin{bmatrix}
    1 & 1  \\ 1 & 0 
\end{bmatrix}, \quad M_{22}=\begin{bmatrix}
    0 & 1  \\ 1 & 0 
\end{bmatrix}$ & $\Sigma=\begin{bmatrix}
    0 & 0 & 1 & 0 \\ 0 & 0 & 1 & 1 \\ 1 & 0 & 0 & 0 \\ 1 & 1 & 0 & 0
\end{bmatrix}, \quad M=\begin{bmatrix}
    0 & 1 & 0 & 0 \\ 1 & 0 & 0 & 0 \\ 1 & 1 & 0 & 1 \\ 1 & 0 & 1 & 0
\end{bmatrix}, \quad P=(-1, 0), \quad Q=(1, 0)$  &  \\ 
&    $y_2x_1 = x_1y_1, \quad y_2x_2=x_1y_1+x_2y_1$ & & & & \\
\hline
\multirow{3}{*}{$\mathbb{Y}$} &   $x_2x_1 = x_1 x_2,\quad y_2y_1 = -y_1y_2$, & & & & \\ 
& $y_1x_1 = x_1y_1,  \quad y_1x_2= fx_1y_1-x_2y_1+x_1y_2$,  & $\Sigma_{11}=\begin{bmatrix}
    1 & 0  \\ f & -1 
\end{bmatrix}, \quad \Sigma_{12}=\begin{bmatrix}
    0 & 0  \\ 1 & 0 
\end{bmatrix}, \quad \Sigma_{21}=\begin{bmatrix}
    0 & 0  \\ 1 & 0 
\end{bmatrix}, \quad \Sigma_{22}=\begin{bmatrix}
    1 & 0  \\ f & -1 
\end{bmatrix}$ & $M_{11}=\begin{bmatrix}
    1 & 0  \\ 0 & 1 
\end{bmatrix}, \quad M_{12}=\begin{bmatrix}
    0 & 0  \\ 0 & 0 
\end{bmatrix}, \quad M_{21}=\begin{bmatrix}
    f & 1  \\ 1 & f 
\end{bmatrix}, \quad M_{22}=\begin{bmatrix}
    -1 & 0  \\ 0 & -1 
\end{bmatrix}$  & $\Sigma=\begin{bmatrix}
    1 & 0 & 0 & 0 \\ f & -1 & 1 & 0 \\ 0 & 0 & 1 & 0 \\ 1 & 0 & f & -1
\end{bmatrix}, \quad M=\begin{bmatrix}
    1 & 0 & 0 & 0 \\ 0 & 1 & 0 & 0 \\ f & 1 & -1 & 0 \\ 1 & f & 0 & -1
\end{bmatrix}, \quad P=(-1, 0), \quad Q=(1, 0)$  & $f$ is general \\ 
&    $ y_2x_1 = x_1y_2, \quad y_2x_2=x_1y_1+fx_1y_2-x_2y_2$ & & & & \\
\hline
\multirow{3}{*}{$\mathbb{Z}$} &  $x_2x_1 = -x_1 x_2,\quad y_2y_1 = y_1y_2$, & & &  & \\ 
& $y_1x_1 = x_1y_1+x_2y_2,  \quad y_1x_2 = x_2y_1+x_1y_2$,  & $\Sigma_{11}=\begin{bmatrix}
    1 & 0  \\ 0 & 1 
\end{bmatrix}, \quad \Sigma_{12}=\begin{bmatrix}
    0 & 1  \\ 1 & 0
\end{bmatrix}, \quad \Sigma_{21}=\begin{bmatrix}
    0 & f  \\ f & 0 
\end{bmatrix}, \quad \Sigma_{22}=\begin{bmatrix}
    -1 & 0  \\ 0 & -1 
\end{bmatrix}$ & $M_{11}=\begin{bmatrix}
    1 & 0  \\ 0 & -1 
\end{bmatrix}, \quad M_{12}=\begin{bmatrix}
    0 & 1  \\ f & 0 
\end{bmatrix}, \quad M_{21}=\begin{bmatrix}
    0 & 1  \\ f & 0 
\end{bmatrix}, \quad M_{22}=\begin{bmatrix}
    1 & 0  \\ 0 & -1 
\end{bmatrix}$  & $\Sigma=\begin{bmatrix}
    1 & 0 & 0 & 1 \\ 0 & 1 & 1 & 0 \\ 0 & f & -1 & 0 \\ f & 0 & 0 & -1
\end{bmatrix}, \quad M=\begin{bmatrix}
    1 & 0 & 0 & 1 \\ 0 & -1 & f & 0 \\ 0 & 1 & 1 & 0 \\ f & 0 & 0 & -1
\end{bmatrix}, \quad P=(1, 0), \quad Q=(-1, 0)$  & $f(1+f)\not = 0$ is general \\ 
&    $y_2x_1 = fx_2y_1-x_1y_2, \quad y_2x_2=fx_1y_1-x_2y_2$ & & & & \\
\hline
\end{tabular}
}
\end{center}
\end{table}
}}

\begin{remark} Zhang and Zhang \cite[Subcase 4.4.4]{ZhangZhang2009} formulated the relations of the algebra $\mathbb{Z}$. However, these contain one typo since the coefficients of the relations do not match the entries of the matrix $\Sigma$. Our version of these relations is presented in Table \ref{FourthTableDOE}. They wrote the relations $y_1x_1 = x_1y_2+x_2y_2$ and $y_2x_2=fx_1y_2-x_2y_2$, and the typo concerns that they considered the first factor $x_1y_2$ when it should be $x_1y_1$ according to matrix $\Sigma$.
\end{remark}
\end{landscape}

\section*{Acknowledgements}
The author would like to thank Arturo Niño\footnote{\texttt{diego.nino1@uexternado.edu.co}} and Andrés Riaño\footnote{\texttt{ajrianop@udistrital.edu.co}} for their valuable contributions to several computations appearing in this paper.

\end{document}